\definecolor{OceanBlue}{HTML}{014f86}  
\definecolor{rost}{HTML}{9d0208}    
\definecolor{MildBlue}{HTML}{457b9d}  
\definecolor{ForestGreen}{HTML}{386641}
\definecolor{orange}{HTML}{fb5607} 
\theoremstyle{plain}
\newtheorem{lem}{Lemma}[section]
\newtheorem{theo}[lem]{Theorem}
\newtheorem{cor}[lem]{Corollary}
\newtheorem{defi}[lem]{Definition}
\theoremstyle{definition}
\newtheorem{rem}[lem]{Remark}
\newtheorem{exmp}[lem]{Example}
\newcommandx{\add}[2][1=]{\todo[inline, disable, linecolor=ForestGreen,backgroundcolor=ForestGreen!25,bordercolor=ForestGreen,#1]{#2}}
\newcommandx{\unsure}[2][1=]{\todo[disable, linecolor=rost,backgroundcolor=rost!25,bordercolor=rost,#1]{#2}}
\newcommandx{\todoP}[2][1=]{\todo[linecolor=rost,backgroundcolor=rost!25,bordercolor=rost,#1]{#2}}
\newcommandx{\todoPi}[2][1=]{\todo[inline, linecolor=rost,backgroundcolor=rost!25,bordercolor=rost,#1]{#2}}
\newcommand{\one}{\mathbf{1}}
\newcommand{\dd}{\mathop{}\!\mathrm{d}}
\newcommand{\prob}{\mathbb{P}}
\newcommand{\partition}{\mathcal{P}}
\newcommand{\id}{\mathrm{id}}
\newcommand{\eps}{\varepsilon}
\newcommand{\Lin}{\mathcal{L}}
\newcommand{\as}{\text{a.e}.}
\newcommand{\adm}{\mathcal{A}}
\newcommand{\Xb}{\mathrm{\mathbf{X}}}
\newcommand{\Bcan}{W^{can}}
\newcommand{\w}{\mathrm{w}}
\newcommand{\z}{\mathrm{z}}
\newcommand{\y}{\mathrm{y}}
\newcommand{\Wb}{\mathbf{W}}
\newcommand{\wt}{\mathbbm{w}}
\newcommand{\wb}{\mathbf{w}}
\newcommand{\lift}{\mathrm{Lift}}
\newcommand{\lb}{\left(}
\newcommand{\rb}{\right)}
\newcommand{\Q}{\mathbb{Q}}
\newcommand{\Yb}{\mathbf{Y}}
\newcommand{\vertiii}[1]{{\left\vert\kern-0.25ex\left\vert\kern-0.25ex\left\vert #1 \right\vert\kern-0.25ex\right\vert\kern-0.25ex\right\vert}}
\newcommand{\vb}{\bar{v}}
\newcommand{\sym}[1]{\mathrm{Sym}\left(#1\right)}
\newcommand{\Sym}[1]{\mathrm{Sym}_2(\R^{#1})}
\newcommand{\A}{\mathcal A}
\newcommand{\la}{\langle}
\newcommand{\ra}{\rangle}
\newcommand{\Wtau}{W^{\circ\tau}}
\newcommand{\wtau}{\w^{\circ\tau}}
\newcommand{\Wtaut}{W^{\circ\tau,t}}
\newcommand{\Wtaunull}{W^{\tau(0)}}
\newcommand{\Wtaunullb}{\Wb^{\tau(0)}}
\newcommand{\wtaunull}{\w^{\tau(0)}}
\newcommand{\Wtaunullt}{W^{\tau(t)}}
\newcommand{\Wtaunulltb}{\Wb^{\tau(t)}}
\newcommand{\diag}{\mathrm{diag}}
\newcommand{\cP}[2]{\prob_{#1\mid #2}}
\newcommand{\borelinfty}{\borel_{\vert\cdot\vert_\infty}}
\newcommand{\borelsup}[2]{\borel_{\vert\cdot\vert_\infty}(C([0,#1], \R^{#2}))}
\DeclareMathOperator{\tr}{tr}
\DeclareMathOperator{\negl}{\mathcal{N}}
\DeclareMathOperator{\borel}{\EuScript{B}}
\DeclareMathOperator{\expE}{\mathbb{E}}
\DeclareMathOperator{\F}{\mathcal{F}}
\DeclareMathOperator{\G}{\mathcal{G}}
\newcommand{\X}{\mathbb{X}}
\newcommand{\Xbt}{\tilde{\mathbf{X}}}
\DeclareMathOperator{\W}{\mathbb{W}}
\DeclareMathOperator{\R}{\mathbb{R}}
\DeclareMathOperator{\c12}{\mathbb{C}^{1,2}_b}
\DeclareMathOperator{\N}{\mathbb{N}}
\DeclareMathOperator{\Lt}{L^2}
\newcommand{\RP}{\mathscr{C}}
\DeclareMathOperator*{\esssup}{ess\,sup\ }
\DeclareMathOperator{\esssinf}{ess\,inf\ }
\newcommand\blfootnote[1]{
    \begingroup
    \renewcommand\thefootnote{}\footnote{#1}
    \addtocounter{footnote}{-1}
    \endgroup
}
\title{Causal Hamilton-Jacobi-Bellman Equations for Anticipative Stochastic Optimal Control}
\author{Peter Bank, Franziska Bielert}
\address{Institut f\"ur Mathematik, Technische Universit\"at, Berlin}
\date{}
\begin{document}
\begin{abstract}
We consider a stochastic optimal control problem where the controller can anticipate the evolution of the driving noise over some dynamically changing time window. The controlled state dynamics are understood as a rough differential equation. We combine the martingale optimality principle with a functional form of It\^o's formula to derive a Hamilton-Jacobi-Bellman (HJB) equation for this problem. This HJB equation is formulated in terms of Dupire's functional derivatives and involves a transport equation arising from the anticipativity of the problem. 
\end{abstract}

\blfootnote{\emph{Key words}: Causal Hamilton-Jacobi-Bellman equation, functional It\^o formula, anticipative controls, rough differential equations}

\maketitle
\tableofcontents

\section{Introduction}

We consider a controller who receives a terminal reward after executing a strategy $\phi$ in the  drift-controlled diffusion system
\begin{align}\label{sec:introduction:cDE}
	Y(t)=y, \quad \dd Y(s)=b(s,Y(s),\phi(s))\dd s + f(Y(s))\dd W(s).
\end{align}
 In contrast to the standard setting, we assume that the controller knows at any time $s \in [t,T)$ the future development of the Brownian motion $W$ until time $\tau(s)>s$, where $\tau$ is a time-change $\tau\colon [0,T]\to[0,T]$. From the controller's perspective the problem of maximizing the expected reward is 
\begin{align}\label{sec:introduction:value}
	v(t,y)=\esssup_\phi \expE(g(Y(T))\mid \F_{\tau(t)}^W).
\end{align} 
A key challenge in this problem is that $v(t,y)$ depends on the evolution of $(W(s), s\in[t,\tau(t)])$ of the Brownian motion over the time window $[t,\tau(t)]$. 
The aim of the present paper is to derive a Hamilton-Jacobi-Bellman equation whose solution $u$ will yield a pathdependent functional that allows us to  represent $v$ in a suitable way. The idea is to combine the martingale optimality principle 
with a pathdependent It\^o-formula that goes back to \cite{dupireFunctionalItoCalculus2009, BallyVlad2016Sibp}. 
To apply this framework we write
\begin{align}\label{sec:introduction:repvalue}
	v(t,y)=u(t,y,W(t),\Wtaunull,\Wtau),
\end{align}
where $\Wtaunull$ records the initial path segment of $W$ up to time $\tau(0)$ and  $\Wtau$ is the time-changed Brownian motion $\Wtau(t)=W(\tau(t))-W(\tau(0))$. More specifically, we  will view $\Wtaunull$ as a parameter for a family of maps $u(\cdot,\Wtaunull,\cdot)$ which are \emph{causal} in the sense that, for every $t\in[0,T]$, $u(t,y,W(t),\Wtaunull,\Wtau)$ depends only on $W(t)$ and $(\Wtau(s), s\in[0,t])$.
We show that, if sufficiently smooth, such a 
\begin{align}\label{sec:introduction:u}
    u\colon [0,T]\times\R^n\times \R^d\times C([0,\tau(0)],\R^d)\times C([0,T],\R^d)\to\R
\end{align} 
is characterized by the following (deterministic) \emph{causal Hamilton-Jacobi-Bellman equation}
    \begin{align}\label{sec:introduction:hjb}
	\begin{split}
			\begin{cases}
				0 &= Du(t,y,\w(t),\wtaunull,\wtau) + \sup_{\varphi\in\R^m}\ \la\nabla_y u(t,y,\w(t),\wtaunull,\wtau), b(t,y,\varphi)\ra\\ 
				&\quad+ \frac12 \tr(f(y)^T\nabla^2_y u(t,y,\w(t),\wtaunull,\wtau) f(y)) + \tr(f(y)^T\nabla^2_{yw} u(t,y,\w(t),\wtaunull,\wtau))\\ 
                &\quad + \frac12 \tr(\nabla_{w}^2 u(t,y,\w(t),\wtaunull,\wtau)) + \frac12\tr(\nabla_\z^2 u(t,y,\w(t),\wtaunull,\wtau))\tau'(t),\\
                0 &= \la\nabla_y u(t,y,\w(t),\wtaunull,\wtau), f(y)\ra + \nabla_w u(t,y,\w(t),\wtaunull,\wtau),\\
				g(y) &= u(T,y,\w(t),\wtaunull,\wtau),
			\end{cases}
	\end{split}
\end{align}
which is to hold for $t\in[0,T]$, $y\in\R^d$ and almost every Brownian path $\w\in C([0,T])$.
Here, $Du$ and $\partial^2_z u$ (linked to $z=\wtau$) denote the causal time and second causal space derivative introduced by B.~Dupire in \parencite{dupireFunctionalItoCalculus2009}. Indeed, the \hyperlink{theo:verification}{Verification theorem} shows that, if there is a suitably regular function $u$ that solves this HJB equation and a suitable feedback candidate for an optimal control, then $u$ yields the problem's value $v(t,y)$ given in \eqref{sec:introduction:value} via~\eqref{sec:introduction:repvalue}. Conversely, Theorem \ref{theo:regularvaluesolvesHJB} shows that, if $v(t,y)$ can be written as in~\eqref{sec:introduction:repvalue} for a suitably regular $u$, then this function satisfies our causal HJB equation. 

Clearly the HJB equation differs from the standard setting as its parabolic first equation in~\eqref{sec:introduction:hjb} uses causal derivatives and the second equation adds a transport equation. Causal derivatives also appear in the HJB equation for optimal control with pathdependent coefficients studied in \parencite{cossoOptimalControlPathdependent2022, cossoPathdependentHamiltonJacobiBellmanEquation2023}. This relates to the work on pathdependent PDEs in non-Markovian setting, that is with random coefficients, \parencite{ekrenViscositySolutionsPath2014}. Another instance of pathdependency is the case of a non-Markovian driver in \eqref{sec:introduction:cDE}, e.g.\ a fractional Brownian motion, then $Y$ does not satisfy the flow property, see \parencite{viensMartingaleApproachFractional2018}. By contrast, our coefficients are standard, the driver simply a Brownian motion and the pathdependency is due to the anticipative information $(\F^W_{\tau(s)})_{s \in [t,T]}$ in \eqref{sec:introduction:value}.

We first establish the martingale optimality principle for the time changed filtration. Then a suitable pathdependent It\^o formula gives the Doob-Meyer decomposition of the supermartingale $[t,T]\ni s\mapsto u(s, Y(s),W(s),\Wtaunull,\Wtau)$ induced by any admissible policy. It reveals that the martingale part is driven by the time-changed Brownian motion $\Wtau$. We show that the contributions from changes of $Y$ (thus of $W$) cannot be compensated by the $\dd s$-term in this It\^o formula. As a consequence the transport term in the second equation of~\eqref{sec:introduction:hjb} emerges. So that only the first two equations together establish a suitable martingale optimality principle.

Let us compare our approach and its HJB equation~\eqref{sec:introduction:hjb} with the literature, at least in the special case $\tau(t) \equiv T$ where the controller knows the full the evolution of $W$. The corresponding \emph{pathwise stochastic control problem} is studied using PDE methods in \cite{MR1647162} and by probabilistic techniques or rough analysis methods in \cite{buckdahnPathwiseStochasticControl2007,caruanaRoughPathwiseApproach2010,frizControlledRoughSDEs2024}, who even add a second Brownian motion that we are omitting here for simplicity. In our notation (and using their Stratonovich dynamics instead of~\eqref{sec:introduction:cDE}), they offer an HJB equation directly for $v$ of~\eqref{sec:introduction:value}:\begin{align}\label{eq:intro-roughHJB}
    -d_t v(t,y) = \sup_{\varphi \in \mathbb{R}} \la\nabla_y v(t,y), b(t,y,\varphi)\ra \dd t+ \la \nabla_y v(t,y),f(y)\ra  \circ \dd W(t).
\end{align}
In \cite{buckdahnPathwiseStochasticControl2007}, this equation is given a rigorous meaning by a Doss-Sussmann flow transformation. In the new coordinates the troublesome driver $\dd W(t)$ disappears and one finds a partial differential equation, albeit with rather complex random coefficients. The paper \cite{frizControlledRoughSDEs2024} views~\eqref{eq:intro-roughHJB} as a rough partial differential equation. By contrast, our HJB equation is based on the original coefficients and uses comparably easy to compute causal derivatives allowing for easy verification arguments.

The analysis of our causal HJB equation requires some technical preparations. A first issue is to rigorously address the anticipativity in the system dynamics~\eqref{sec:introduction:cDE} where we follow the approach by \cite{frizControlledRoughSDEs2024}: Since the control $\phi$ can anticipate part of the future evolution of the Brownian motion $W$, so will the system $Y$ and the volatility $f(Y)$ in general. This takes the integral of $f(Y)$ against $\dd W$ outside the scope of It\^o's theory of stochastic differential equations. With sufficient regularity assumptions on $f$ and $b$, rough analysis (see \cite{RoughBook} for an introduction)  gives~\eqref{sec:introduction:cDE} a pathwise meaning by considering the It\^o rough path lift $\mathbf{W}$ of $W$. This perspective also calls for a suitable pathdependent It\^o formula that extends  \cite{dupireFunctionalItoCalculus2009, BallyVlad2016Sibp}  to the rough paths setting; see~\cite{bielertRoughFunctionalIto2024, cuchieroFunctionalItoformulaTaylor2025}.

With the anticipative system dynamics understood, we then investigate the probabilistic properties of the value function~\eqref{sec:introduction:value}. Section~\ref{sec:dpp} establishes the \hyperlink{dpp}{dynamic programming principle}. In particular Lemma \ref{lem:updirec} shows that there exists a maximizing sequence for \eqref{sec:introduction:value}, comparable to \parencite[Theorem 4.2, (ii)]{buckdahnPathwiseStochasticControl2007}. 

Section~\ref{sec:hjb} focuses on the causal HJB equation. We start by introducing several concepts to understand causal partial differential equations. To apply a rough functional It\^o formula in our pathwise setting we have to understand the covariation between the solution $Y$, thus $W$, and the time-changed Brownian motion $\Wtau$. Since $W$ and $\Wtau$ are not be semimartingales with respect to the same filtration, their stochastic quadratic variation cannot be constructed by classical stochastic analysis. Instead we follow \parencite{contPathwiseIntegrationChange2019} for a pathwise notion of quadratic variation in Section~\ref{sec:sec:qv}. In particular Lemma \ref{lem:2varW} shows that (in this pathwise sense) almost surely $[W,\Wtau] = 0$, $[W]=\id$ and $[\Wtau]=\tau-\tau(0)$. 

Next, Section \ref{sec:func} summarizes some concepts from functional calculus for causal maps from \parencite{dupireFunctionalItoCalculus2009, oberhauserExtensionFunctionalIto2012, BallyVlad2016Sibp, bielertRoughFunctionalIto2024}. Also we apply the rough functional It\^o formula of~\cite{bielertRoughFunctionalIto2024} with It\^o dynamic corrections to causal maps of the form $F(\id,Y,\Wtau)$ where $Y$ is a solution to~\eqref{sec:introduction:cDE}. We use the rough functional It\^o formula from \parencite{ bielertRoughFunctionalIto2024}, because rough integration is an intrinsic integral notion and provides explicit estimates that simplify proofs, whereas in \parencite{dupireFunctionalItoCalculus2009, oberhauserExtensionFunctionalIto2012, BallyVlad2016Sibp} the integral notion is via an indirect approach due to F\"ollmer \parencite{Föllmer1981}. We are then in the position to prove in the \hyperlink{theo:verification}{Verification Theorem~\ref{theo:verification}} that if a suitable map $u$ of the form \eqref{sec:introduction:u} satisfies the causal HJB-equation~\eqref{sec:introduction:hjb}, then it yields a modification of the value function \eqref{sec:introduction:value} via~\eqref{sec:introduction:repvalue}. To prove that the HJB equation is also necessary for regular value functions, we have to resolve some technical difficulties, which is the purpose of Section~\ref{sec:sec:regvalueHJB}. The main challenge is that we need to establish our results for almost every Brownian path and thus need to be careful with the choice of exceptional null sets. 
We formulate the stochastic \hyperlink{dpp}{dynamic programming principle} for equivalence classes of random variables, but will need to draw conclusions for and work with functions $\vb$ evaluated at Brownian paths. To that end we carefully construct conditional distributions of $(Y,\phi,\Wtaunullb,\Wtau)$ given $\F_{\tau(0)}$, see Section \ref{sec:sec:sec:condDist}. Another technical challenge that arises from the combination of deterministic and probabilistic concepts is resolved in Lemma~\ref{lem:qvCondExp}. There, we calculate the (pathwise) quadratic variation of the rough integral (from the rough functional It\^o formula) conditioned on the time-changed filtration $\F_{\tau(t)}$. We end Section~\ref{sec:sec:regvalueHJB} with Theorem \ref{theo:regularvaluesolvesHJB} showing the necessity of the HJB equation. For that we adapt some ideas previously used in optimal control with pathdependent coefficients in the controlled diffusion from \parencite{cossoOptimalControlPathdependent2022}. 

Section~\ref{sec:exmp} illustrates our approach by some examples. We revisit \parencite{insiderbank, bankOptimalInvestmentNoisy2024} and their utility maximization problem for an investor who can slightly peek ahead in the stock price evolution. Here, our dynamic programming approach and the causal HJB equation complement the convex duality approach taken there. Example \ref{exmp:2} and \ref{exmp:3} are originally from pathwise stochastic control. 
Actually Example \ref{exmp:3} goes beyond the scope of the present paper since it involves an extra controlled diffusion $B$ (independent of $W$ for which one has anticipative information) as studied by~\cite{buckdahnPathwiseStochasticControl2007,caruanaRoughPathwiseApproach2010,frizControlledRoughSDEs2024}. It is included as an outlook about a generalization of our causal HJB approach in this direction.

\section{Notation}\label{sec:notation}
Let $(\Omega, \F, \prob)$ be a complete probability space.
Let $L^0(\prob)$ denote the set of equivalence classes of $\F$ measurable maps $X\colon \Omega\to\R\cup\{+\infty\}$. For $X,Y\in L^0(\prob)$ we write $X\le Y$, if for any two representatives the inequality holds $\prob$-\as . Clearly $(L^0(\prob), \le)$ is an ordered set. Then $X^*\in L^0(\prob)$ is the supremum of $\mathcal{X} \subset L^0(\prob)$ if it is the smallest upper bound for $\mathcal X$, i.e.
\begin{align}\label{def:esssup}
\forall X\in\mathcal X,\ X\le X^*, \quad \exists Z\in L^0(\prob)\colon \forall X\in\mathcal X, X\le Z \Rightarrow X^*\le Z.
\end{align}
For an introduction we recommend \parencite{lowtherEssentialSuprema2019}.
We emphasize that in the case of nonnegative random variables, spelling out the properties for representatives, each representative of $X^*$ is the $\prob$-\as\ defined essential supremum of the family of representatives of $\mathcal X$ studied  in \parencite[Definition A.1, Theorem A.3]{karatzasMethodsMathematicalFinance1998}.  The essential supremum exists and it is easy to show that for any two choices of representative families $\mathcal X_1$, $\mathcal X_2$, i.e. $\mathcal X_1$ contains for each $X\in\mathcal X$, one representative  $X_1\in X$ and similar for $\mathcal X_2$, it holds $\prob$-\as\ that $X_1 = X_2$. Therefore we abuse the notation and also write 
\begin{align*}
X^* = \esssup \mathcal X.
\end{align*}
Further we set for $x,y\in\R$, $x\vee y = \max\{ x,y\}$ and for $X,Y\in L^0(\prob)$, $X\vee Y$ denotes the equivalence class of the pointwise maximum of representatives.
Moreover for a sub-$\sigma$ algebra $\G\subset \F$ and $X\in L^1(\Omega, \F, \prob)$ (resp. $X\ge 0$) we always consider the conditional expectation $\expE(X\mid \G)\in L^1(\Omega, \G, \prob)$ (resp. $\expE(X\mid \G)\in L^0(\Omega, \G, \prob)$). We mention that standard properties of the conditional expectation also hold for nonnegative random variables with infinite expectation.
The set $L^0([0,T],\R^m)$ of equivalence classes of Borel measurable maps $X\colon [0,T]\to\R^m$ is equipped with the Ky-Fan metric $d_{KF}$ which metrizes convergence in Lebesgue measure. 

The Euclidean norm on $\R^d$ is denoted by $\vert\cdot\vert$.
Let $T>0$ and $D([0,T],\R^d)$ be the set of c\`adl\`ag paths $X\colon [0, T] \to \R^d$ equipped with the uniform norm $\vert \cdot \vert_\infty$. For such paths and $t\in [0,T]$ we denote by $X(t)$ the value of the path at time $t$ and by $X_t$ the stopped path $X_t = X(\cdot \wedge t)\in D([0,T],\R^d)$. 
We call $\partition = \{ [t_{k-1}, t_k]\colon k=1,\dots,n\}$ with $t_k\in[0,T]$ for all $k=0,\dots,n$, \textit{partition of $[0,T]$} if $0=t_0< t_1<\dots < t_n = T$. The \textit{mesh} of a partition $\partition$ is defined as $\vert\partition\vert = \max_{[s,t]\in\partition} \vert t-s\vert$. \index{interval@$\vert\partition\vert$} Further let $\Delta_T:= \{ (s,t)\in [0,T]\times [0,T]\colon 0\le s\le t\le T\}$.

Let $C([0,T],\R^d)$ denote the the set of continuous paths $X\colon[0,T]\to\R^d$. Equipped with the uniform norm $\vert\cdot\vert_\infty$ it is complete separable space, thus Polish (as a topological space). We denote by $\borelsup{T}{d}$ the Borel $\sigma$-algebra (generated by norm topology).

For $\alpha\in(0,1)$, a two-parameter path $\Xi\colon\Delta_T\to \R^d$ is $\alpha$-H\"older continuous if 
\begin{align*}
	\vert \Xi\vert_\alpha := \sup_{\substack{(s,t)\in\Delta_T,\\ s<t}} \frac{\vert\Xi(s,t)\vert}{\vert t-s\vert^\alpha} < \infty.
\end{align*}
Then a path $X\colon [0,T]\to \R^d$ is $\alpha$-H\"older continuous if its increments $(\delta X)(s,t):= X(t) - X(s)$ are and their collection is $C^\alpha([0,T],\R^d)$.

The bracket $\la\cdot,\cdot\ra$ denotes the dual pairing according to the context. In particular we use the notation for the Euclidean scalar product in $\R^d$  and the Frobenius scalar product in $\R^{n\times d}$. The space of bounded linear operators from $\R^{l\times d}$ to $\R^{m\times n}$ is denoted by $\Lin(\R^{l\times d},\R^{m\times n})$. As usual we identify $\Lin(\R^d, \R^n)\cong\R^{n\times d}$ and $\Lin(\R^n,\Lin(\R^n,\R))\cong\R^{n\times n}$. Moreover we write for $A\in\R^{n\times m}$, $B\in\Lin(\R^{l\times d}, \R^{n\times m})$, $\la A,B\ra\in\R^{l\times d}$ for the matrix defined via the linear operator $\la A,B\ra X:= \la A,BX\ra$ for $X\in\R^{l\times d}$. And further for $A,B\in\R^{n\times d}$, $A\otimes B\in\Lin(\R^{d\times d},\R^{n\times n})$ is defined by setting for $X\in\R^{d\times d}$, $(A\otimes B) X :=AXB^T$ (so in particular for $A,B\in\R^n$, simply $AB^T\in\R^{n\times n}$.)
The trace of a matrix $A\in\R^{n\times n}$ is denoted by $\tr(A)$.

For a function $f\colon\R^l\to\R^{n\times m}$ we denote its derivative, if it exists, by $\nabla f\colon \R^l \to \Lin(\R^l,\R^{n\times m})$ and similar for higher order derivatives. 

For two terms $x,y$ we abbreviate the existence of some constant $C>0$ such that $x\le C y$ to $x\lesssim y$ and by $\lesssim_p$ we indicate a dependency $C= C(p)$ on some parameter $p$.\index{$\lesssim$}

Let $(S,\mathcal S)$ be a measurable space and $X\colon\Omega\to S$ measurable, then $\prob^X\colon \mathcal{S}\to[0,1]$ denotes the law of $X$ under $\prob$. For a sub-$\sigma$-algebra $\G\subset \F$, $\prob_{\mid\G}$ denotes the restriction of $\prob$ from $\F$ to $\G$. Finally, $\lambda$ will be used to denote the Lebesgue measure on the real line.

\section{Optimal Control Problem With Anticipative Controls}\label{sec:condProblem}

In this section we rigorously understand the anticipative controlled state dynamics as a randomized rough differential equation. To that end we briefly recall results from deterministic rough analysis in Section~\ref{sec:condProblem:sec:stateEq}. The random setting is specified in Section~\ref{sec:condProblem:sec:StochDyn}, where the driver is a Brownian motion whose evolution is partially anticipated by the 
admissible controls. Section \ref{sec:condProblem:sec:valuePro} defines the value function of our optimizatio problem.

\subsection{State Equation}\label{sec:condProblem:sec:stateEq}
  We consider the following set up for a controlled deterministic dynamic. Let $X\colon[0,T]\to \R^d$ be a path, $b$, $f$ coefficients and $\phi$ a control of the form
 \begin{align*}
 	b\colon [0,T] \times \R^n \times \R^m \to \R^n, \quad f\colon \R^n \to \R^{n\times d}, \quad 	\phi\colon [0,T] \to \R^m.
 \end{align*} 
 For $T>0$ and initial datum $(t,y)\in[0,T]\times\R^n$, $Y^{t,y,\phi}\colon [0,T]\to\R^n$ denotes the path with controlled dynamic
 \begin{align}
 	\begin{split}\label{cDE}
 		\dd Y(s) &= b(s, Y(s), \phi(s)) \dd s + f(Y(s)) \dd X(s),\quad s\in (t, T],\\
 		Y(t) &= y.
 	\end{split}
 \end{align}

\subsubsection{Rough Differential Equations}
We briefly introduce rough paths and controlled rough paths to H\"older continuity parameters $\alpha, \beta>0$. For a thorough treatment of rough path theory we refer to  \parencite{RoughBook}, in particular Chapters 2 and 4. For simplicity they use one H\"older parameter. For a setting with several other variations of parameters see \parencite{frizRoughStochasticDifferential2024}.

\begin{defi}[$\alpha$-H\"older Rough Path]
	Let $\alpha\in(1/3,1/2]$, $X\colon[0,T]\to \R^d$, $\X\colon\Delta_T\to\R^{d\times d}$. Then $\Xb = (X, \X)$ is called $\alpha$-H\"older rough path if $X$ (resp. $\X$) is $\alpha$-(resp. $2\alpha$)-H\"older continuous and Chen's relation holds for all $(s,u), (u,t)\in\Delta_T$: 
	\begin{align*}
		\X(s,t)-\X(s,u)-\X(u,t)=(X(u)-X(s))\otimes(X(t)-X(u)).
	\end{align*}
\end{defi}

Equipped with the metric 
\begin{align}\label{defi:rpmetric}
	\rho_\alpha(\Xb, \Xbt)=\vert X-\tilde{X}\vert_\alpha + \vert\X-\tilde{\X}\vert_{2\alpha},
\end{align}
the set of $\alpha$-H\"older rough paths over $\R^d$ is a complete metric space.
Let $\mathcal C^\infty$ denote the subset of \emph{smooth rough paths}, i.e. all $\alpha$-H\"older rough paths $(X,\X)$, such that $X$ and, for every $s\in[0,T]$, $\X(s,\cdot)$ are smooth. Further write $\RP_T$ for the closure of $\mathcal C^\infty$ with respect to $\rho_\alpha$. Then $\RP_T$ is a Polish space, compare \parencite[Exercise 2.8]{RoughBook}, and we write $\mathfrak{C}_T$ for its Borel $\sigma$-algebra.\\

\begin{exmp}[It\^o Lift]\label{exmp:lift}
	Let $\alpha\in(1/3,1/2)$. 
	\begin{enumerate}[i)]
		\item It is commonly used that $\prob$-\as\ sample path of a $d$-dimensional Brownian motion $W=(W^1,\dots, W^d)$ induces an $\alpha$-H\"older continuous rough path $\Wb$, cf. \parencite[Chapter 3]{RoughBook}, using iterated It\^o integrals for $\W$. Namely one sets for $i,j=1,\dots,d$,
        \begin{align*}
            \W^{i,j}(s,t):= \lb \int_s^t (W^i(r) - W^i(s))\dd W^j(s)\rb(\omega)
        \end{align*}
        and $\W=(\W^{i,j})_{i,j=1,\dots,d}$.
		Then the Kolmogorov criterion for rough paths \parencite[Theorem 3.1, Proposition 3.4]{RoughBook} gives a null set $ N\in\F_0$, such that for every $\omega\in\Omega\setminus N$, $W$ (resp. $\W$) is $\alpha$ (resp. $2\alpha$)-H\"older continuous and for all $q<\infty$,
		\begin{align*}
			\vert W\vert_\alpha+\sqrt{\vert\W\vert_{2\alpha}}\in L^{q}(\Omega, \F, \prob).
		\end{align*}
		Setting $W$ and $\W$ on $ N$ to zero, every sample path of $\Wb := (W, \W)$ is an $\alpha$-H\"older rough path, called \textit{Brownian rough path}. Combining  \parencite[Proposition 2.8, Proposition 3.4]{RoughBook} and the fact that $(W,\W+\la W\ra)$ is a geometric rough path and $\la W\ra$ smooth, Brownian rough paths can be approximated by smooth rough paths, hence $\Wb\colon\Omega\to\RP_T$.
		In particular it holds for every $t\in[0,T]$ that the lift $\omega\mapsto \Wb_t(\omega)$ is $\F_t-\mathfrak{C}_t$ measurable.
		\item In the same way we define a lift on the path space $C([0,T],\R^d)$: 
		 $(C([0,T], \R^d), \borel_{\vert\cdot\vert_\infty}(C([0,T],\R^d)))$ equipped with the Wiener measure $\prob^W$ gives rise to the Brownian motion, namely
		\[ \begin{split}
			\Bcan\colon C([0,T], \R^d) \times [0,T] &\to \R^d\\
			(\w, t)&\mapsto \w(t).
		\end{split}\]
		Further we write $\negl$ for the class of $W$-negligible sets and $\borel_{\vert\cdot\vert_\infty}(C([0,T],\R^d))\vee \negl$ for the completion.
		Using part i), we  may define the lift as a function of $\w$. Namely we denote by $N$ the $\prob^W$-negligible set where the lift of $B^{can}$ is not defined
		and let
		\begin{equation}\label{defi:lift}
			\begin{split}
				\lift\colon C([0,T], \R^d) &\to \RP_T\\
				\w&\mapsto \begin{cases}
					\mathbf{0} =(0,0)&\ \w\in N,\\
					\wb = (\w, \wt)& \text{ else,}
				\end{cases}
			\end{split}
		\end{equation}
		where $\wb = \Wb^{can}(\w)$, i.e.\ $\wt = \W^{can}(\w)$ and $\W^{can}$ the iterated It\^{o} integrals as before. Clearly it follows that $\lift$ is $\borel_{\vert\cdot\vert_\infty}(C([0,T],\R^d))\vee\negl-\mathfrak{C}_T$ measurable.
	\end{enumerate}
\end{exmp}

We next define the class of paths that we can integrate against $\alpha$-H\"older rough paths.

\begin{defi}[$(\alpha, \beta)$-H\"older $X$-controlled Rough Path]\label{defi:cRP}
	Let $\alpha\in(1/3, 1/2]$, $\beta\in(0,1]$ and $X$ an $\alpha$-H\"older rough path. Further let $Y\colon[0,T]\to\R^{n\times l}$, $Y'\colon[0,T]\to\Lin(\R^d, \R^{n\times l})$. Then $(Y, Y')$ is called $(\alpha, \beta)$-H\"older $X$-controlled rough path, if $Y'$ is $\alpha$-H\"older continuous and the remainder $R^{Y, X}\colon\Delta_T\to\R^{n\times l}$, defined for $(s,t)\in\Delta_T$ by 
	\begin{align*}
		R^{Y,X}(s,t) = Y(t)-Y(s)-Y'(s)(X(t)-X(s)),
	\end{align*}
	is $\alpha+\beta$-H\"older continuous. If $\beta=\alpha$ we abbreviate to $\alpha$-H\"older $X$-controlled rough path.
\end{defi}

The construction of the integral of $(Y, Y')$ with respect to $\Xb$ goes back to \parencite[Proposition 1]{gubinelliControllingRoughPaths2004} and can be found in many variations, e.g.\ \parencite[Theorem 4.10]{RoughBook}. For us the following suffices:

\begin{theo}[Rough Integration]\label{theo:RI}
	Let $\Xb=(X,\X)$ be an $\alpha$-H\"older rough path over $\R^d$ and $(Y,Y')$ be an $(\alpha,\beta)$-H\"older $X$-controlled rough path over $\R^{n\times d}$ such that $\beta\in(0,\alpha]$ and $2\alpha+\beta>1$. Then 
	\begin{align*}
		\int_0^T Y\dd\Xb := \lim_{\vert\partition\vert\to 0} \sum_{[s,t]\in\partition} Y(s)(X(t)-X(s)) + Y'(s)\X(s,t)
	\end{align*}
	is a well-defined limit along partitions $\partition$ of $[0,T]$ that we call rough integral. Moreover the following estimate holds: for $(s,t)\in\Delta_T$,
	\begin{align*}
		\left\vert \int_s^t Y\dd\Xb - Y(s)(X(t)-X(s)) - Y'(s)\X(s,t)\right\vert\lesssim_{T,\alpha,\beta} (\vert X\vert_\alpha\vert R^{Y,X}\vert_{\alpha+\beta} +\vert \X\vert_{2\alpha}\vert Y'\vert_{\alpha}) \vert t-s\vert^{2\alpha +\beta},
	\end{align*}
	and $\lb \int_0^\cdot Y\dd\Xb, Y\rb$ is $(\alpha, \alpha)$-H\"older $X$-controlled with
	\begin{align*}
		\left\vert \int_s^t Y\dd\Xb\right\vert\lesssim \big(\vert X\vert_\alpha(\vert R^{Y,X}\vert_{2\alpha} + \vert Y(0)\vert + T^\alpha\vert Y\vert_\alpha) + \vert\X\vert_{2\alpha}(\vert Y'(0)\vert + (1\vee T^\alpha)\vert Y'\vert_\alpha)\big)\vert t-s\vert^\alpha.
	\end{align*}
\end{theo}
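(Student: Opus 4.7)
The plan is to apply the Sewing Lemma to the germ $\Xi(s,t) := Y(s)(X(t)-X(s)) + Y'(s)\X(s,t)$, defined for $(s,t)\in\Delta_T$, so that the Riemann sums in the statement coincide with $\sum_{[s,t]\in\partition}\Xi(s,t)$. The central computation is to control the three-parameter defect $\delta\Xi(s,u,t):=\Xi(s,t)-\Xi(s,u)-\Xi(u,t)$ for $s\le u\le t$. Combining Chen's relation with the controlled rough path identity $Y(u)-Y(s)=Y'(s)(X(u)-X(s))+R^{Y,X}(s,u)$, I expect after a routine cancellation the clean expression
\begin{align*}
\delta\Xi(s,u,t) = -R^{Y,X}(s,u)\bigl(X(t)-X(u)\bigr) - \bigl(Y'(u)-Y'(s)\bigr)\X(u,t).
\end{align*}
Bounding each summand in the respective H\"older norms and using $\beta\le\alpha$ yields
\begin{align*}
\vert\delta\Xi(s,u,t)\vert\lesssim_{T,\alpha,\beta}\bigl(\vert X\vert_\alpha\vert R^{Y,X}\vert_{\alpha+\beta}+\vert\X\vert_{2\alpha}\vert Y'\vert_\alpha\bigr)\vert t-s\vert^{2\alpha+\beta},
\end{align*}
with exponent $2\alpha+\beta>1$ by hypothesis.

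The Sewing Lemma then produces a unique additive two-parameter map $\Psi$ satisfying $\vert\Psi(s,t)-\Xi(s,t)\vert$ bounded by a constant multiple of the H\"older norm of $\delta\Xi$ times $\vert t-s\vert^{2\alpha+\beta}$, and the Riemann sums $\sum_{[s,t]\in\partition}\Xi(s,t)$ converge to $\Psi(0,T)$ along any sequence of partitions with mesh tending to zero. Setting $\int_s^t Y\dd\Xb:=\Psi(s,t)$ immediately gives the local estimate stated in the theorem.

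To verify that $\bigl(\int_0^\cdot Y\dd\Xb,Y\bigr)$ is $(\alpha,\alpha)$-H\"older $X$-controlled, I decompose the remainder using the local estimate:
\begin{align*}
R^{\int Y\dd\Xb,X}(s,t) = \int_s^t Y\dd\Xb - Y(s)\bigl(X(t)-X(s)\bigr) = Y'(s)\X(s,t) + \Bigl(\int_s^t Y\dd\Xb-\Xi(s,t)\Bigr).
\end{align*}
The first summand is $2\alpha$-H\"older by the definition of an $\alpha$-H\"older rough path, while the second inherits its $(2\alpha+\beta)$-H\"older regularity from the local estimate above. The overall $\alpha$-H\"older bound on $\int_0^\cdot Y\dd\Xb$ then follows by the triangle inequality together with the sup-norm estimates $\vert Y\vert_\infty\le\vert Y(0)\vert+T^\alpha\vert Y\vert_\alpha$, $\vert Y'\vert_\infty\le\vert Y'(0)\vert+T^\alpha\vert Y'\vert_\alpha$, and $\vert t-s\vert^{2\alpha+\beta}\le T^{\alpha+\beta}\vert t-s\vert^\alpha$. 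The main obstacle is the algebraic identity for $\delta\Xi$, which requires careful bookkeeping of Chen's relation against the controlled-path remainder; once this is in hand, the Sewing Lemma delivers both the existence of the integral and the quantitative estimates mechanically.
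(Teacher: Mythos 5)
Your proposal is correct and is precisely the standard sewing-lemma argument (the defect identity $\delta\Xi(s,u,t)=-R^{Y,X}(s,u)(X(t)-X(u))-(Y'(u)-Y'(s))\X(u,t)$ checks out via Chen's relation, and the exponent count $2\alpha+\beta>1$ with $\beta\le\alpha$ gives the stated bounds). The paper does not prove this theorem itself but cites Gubinelli and \parencite[Theorem 4.10]{RoughBook}, whose proof is exactly the route you describe, so your argument matches the intended one.
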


To make sense of ~\eqref{cDE}, we need to integrate $f(Y^{t,y,\phi})$ against an $\alpha$-H\"older rough path $X$. Let $f\in C^3_b(\R^n, \R^{n\times d})$. For an $\alpha$-H\"older $X$-controlled rough path $(Y,Y')$ given by Definition \ref{defi:cRP} with $l=1$ the composition of $f$ with $(Y,Y')$ by
\begin{align*}
	(f(Y), \nabla_yf(Y)Y')
\end{align*}
yields again an $\alpha$-H\"older $X$-controlled rough path, cf. \parencite[Lemma 7.3]{RoughBook}. Note $\nabla_yf(y)\in\Lin(\R^n,\R^{n\times d})$, $Y'\colon [0,T]\to\Lin(\R^d,\R^n)$, so that $\nabla_yf(Y)Y'\colon [0,T]\to\Lin(\R^d,\R^{d\times n})$. In particular the following estimates hold for $(s,t)\in\Delta_T$,
\begin{align}\label{estimate:sigma(Y)}
	\begin{split}
		\vert f(Y(t))-f(Y(s))\vert &\lesssim_{\vert Y\vert_\alpha,\vert\nabla_yf\vert_\infty} \vert t-s\vert^\alpha,\\
		\vert \nabla_yf(Y(t))\cdot f(Y(t)) - \nabla_yf(Y(s))\cdot f(Y(s))\vert&\lesssim_{\vert f\vert_{C^2_b}, \vert Y\vert_\alpha} \vert t-s\vert^\alpha,\\
		\vert R^{f(Y), X}(s,t)\vert &\lesssim_{\vert f\vert_{C^2_b}, \vert Y\vert_\alpha, \vert R^{Y,X}\vert_{2\alpha}} \vert t-s\vert^{2\alpha}.
	\end{split}
\end{align}

We now collect some  results from rough path theory which are useful in our setting.
\begin{theo}[Existence and Properties of Solutions]\label{theo:sol2cSDE} Let $\alpha\in(1/3,1/2)$ and $\Xb = (X, \X)$ be an $\alpha$-H\"older rough path. Let $b\colon [0,T]\times \R^d\times\R^m\to \R^d$ be bounded, measurable and  uniformly Lipschitz continuous in $y\in\R^n$, i.e., there exists $L>0$, such that for all $y,\tilde y\in \R^d$,
	\begin{align}\label{b}
		\sup_{\varphi\in\R^m}\sup_{t\in[0,T]} \vert b(t,y,\varphi)-b(t,\tilde y, \tilde\varphi)\vert\le L \vert y-\tilde y\vert\
	\end{align}
	and $f\in C^3_b(\R^n, \R^{n\times d})$.
\begin{enumerate}[i)]
	\item  Then for every $y\in\R^n$ and $\phi\in L^0([0,T],\R^m)$ there exists a unique $\alpha$-H\"older $X$-controlled rough path $(Y, f(Y))$ that solves \eqref{cDE} started in $t=0$, i.e.\ for all $t\in[0,T]$,
	\begin{align*}
		Y(t) = y + \int_0^t b(\id, Y, \phi) \dd\lambda + \int_0^t f(Y) \dd\Xb,
	\end{align*}
	where the last integral is a well-defined rough integral of the $\alpha$-H\"older $X$-controlled path $(f(Y), \nabla_yf(Y)\cdot f(Y))$ against the rough path $\Xb$.
	\item The solution $Y$ satisfies the a priori estimate 
	\begin{align*}
		\vert Y\vert_\alpha \lesssim_{\vert b\vert_\infty, \vert f\vert_{C^2_b}} (\vert\Xb\vert_\alpha + T^{1-2\alpha})\vee(\vert\Xb\vert_\alpha + T^{1-2\alpha})^{1/\alpha} 
	\end{align*}
	and
	\begin{align*}
		\vert R^{Y,X}\vert_{2\alpha} \lesssim_{T, \vert b\vert_\infty, \vert f\vert_{C^2_b}} (1+ \vert\Xb\vert_\alpha + T^{1-2\alpha})^2\vee (1+\vert\Xb\vert_\alpha+ T^{1-2\alpha})^{2/\alpha}
	\end{align*}
	with $\vert\Xb\vert_\alpha := \vert X\vert_\alpha + \sqrt{\vert\X\vert_{2\alpha}}$. 
	\item Write $Y^{t,y,\phi, \Xb}:= Y$ for the solution to \eqref{cSDE}. Then for fixed initial data $(t,y)\in[0,T]\times \R^n$ the solution map
	\begin{align}\label{theo:sol2cSDE:solmap}
		F\colon L^0([0,T],\R^m)\times \RP_T \to C([0,T],\R^n),\quad 
		(\varphi,\Xb)\mapsto Y^{t,y,\varphi,\Xb}
	\end{align}
	is $\borel_{\alpha}(L^0([0,T],\R^m))\otimes\mathfrak C^\alpha_T - \borel_{\vert\cdot\vert_\infty}(C[0,T],\R^n))$ measurable.
\end{enumerate}
\end{theo}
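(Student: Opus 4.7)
The plan is to treat the three parts by a standard Banach fixed-point construction for rough differential equations, adapted to allow a bounded Borel measurable drift, followed by Gronwall-type reasoning for the a priori estimates and an approximation argument for the measurability claim. First I would fix the data $(y,\varphi,\Xb)$ and, on a short interval $[0,\delta]$, set up the operator
\[
\mathcal{T}(Y,Y') := \left(y + \int_0^\cdot b(s,Y(s),\varphi(s))\dd s + \int_0^\cdot f(Y)\dd\Xb,\ f(Y)\right)
\]
acting on $\alpha$-H\"older $X$-controlled rough paths with $Y(0)=y$ and $Y'(0)=f(y)$. The rough integral is well defined by Theorem \ref{theo:RI} applied to the controlled rough path $(f(Y),\nabla_yf(Y)Y')$, while the drift integral is a Bochner integral since $b$ is bounded and Borel. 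Using the estimates \eqref{estimate:sigma(Y)}, the quantitative bound in Theorem \ref{theo:RI}, and the Lipschitz hypothesis \eqref{b}, one picks $\delta=\delta(\vert\Xb\vert_\alpha,\vert b\vert_\infty,\vert f\vert_{C^3_b},L)$ small enough that $\mathcal{T}$ maps a suitable closed ball into itself and is a contraction. Banach's fixed-point theorem then yields a unique local solution; boundedness of $b$ and $f$ rules out blow-up, so one extends to all of $[0,T]$ by concatenating solutions on equal-length subintervals.

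For part (ii), I would re-insert the solution into its own equation and apply Theorem \ref{theo:RI} together with the trivial drift bound $\vert\int_s^t b\dd r\vert\le\vert b\vert_\infty(t-s)$. On a subinterval of length $\delta$ chosen so that the $\vert R^{Y,X}\vert_{2\alpha}$ terms on the right-hand side can be absorbed, one obtains a local estimate of the shape $\vert Y\vert_{\alpha;[s,s+\delta]}\lesssim \vert\Xb\vert_\alpha+T^{1-2\alpha}$. Iterating this bound over the $\sim\delta^{-1}$ subintervals needed to cover $[0,T]$, with the natural choice $\delta^{-1}\sim(\vert\Xb\vert_\alpha+T^{1-2\alpha})^{1/\alpha}$, produces the claimed maximum-type bound on $\vert Y\vert_\alpha$. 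The same scheme applied via the third inequality in \eqref{estimate:sigma(Y)} yields the remainder estimate.

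Part (iii) is the one I expect to be the main technical obstacle, because the topology on $L^0([0,T],\R^m)$ encoded in $\borel_\alpha(L^0([0,T],\R^m))$ is not one for which $\varphi\mapsto b(\cdot,\cdot,\varphi(\cdot))$ is obviously continuous. I would handle it by realising the solution as a pointwise limit of Picard iterates that are manifestly jointly Borel in $(\varphi,\Xb)$. Setting $Y^{(0)}\equiv y$ and $Y^{(n+1)}:=\mathcal{T}(Y^{(n)})$, the rough-integration step is continuous in the rough-path topology by Theorem \ref{theo:RI}, hence Borel in $\Xb$, while the drift step is Borel in $\varphi$ by Fubini and the Borel measurability of $b$, and continuous in $Y^{(n)}$ by \eqref{b}. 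An induction then shows that each $Y^{(n)}$ is Borel as a map into $C([0,T],\R^n)$. Since $\mathcal{T}$ is a local contraction with constants depending only on $\vert\Xb\vert_\alpha$, the iterates converge uniformly on bounded subsets of parameter space to $Y^{t,y,\varphi,\Xb}$, which therefore inherits Borel measurability. The delicate point will be to check that the Carath\'eodory-type map $\varphi\mapsto\int_0^\cdot b(s,h(s),\varphi(s))\dd s$ is Borel with respect to the topology underlying $\borel_\alpha(L^0([0,T],\R^m))$ for each continuous $h$; this I would reduce by approximating $\varphi$ with simple functions and invoking a monotone class argument.
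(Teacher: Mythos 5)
Your proposal is correct in substance but takes a genuinely different, self-contained route: the paper does not re-prove any of this, it simply cites \parencite[Theorem 8.3]{RoughBook} for existence and uniqueness (remarking that a bounded, uniformly Lipschitz drift is harmless), obtains the a priori bounds by inserting the drift term into \parencite[Proposition 8.2, Exercise 4.5]{RoughBook} and iterating over subintervals exactly as you describe, and for part (iii) invokes the measurability result \parencite[Theorem 4.5]{frizControlledRoughSDEs2024} for controlled rough SDEs with a measurable parameter space, then passes from joint measurability of $(s,\varphi,\Xb)\mapsto Y(s)$ to measurability into $C([0,T],\R^n)$ via point evaluations. Your Banach fixed-point and Picard-iterate construction buys independence from the external references and makes the dependence of all constants explicit, at the cost of redoing standard work; the paper's citations buy brevity but hide where the drift enters. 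Your treatment of (i) and (ii) matches the cited arguments step for step, so nothing is lost there.

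Two points in (iii) deserve sharpening. First, the monotone class argument should be run over the integrand rather than over $\varphi$: since $b$ is merely Borel in its last argument, approximating $\varphi$ by simple functions gains nothing (there is no continuity of $b$ in $\varphi$ to exploit). Instead, consider the class of bounded Borel $F\colon[0,T]\times\R^m\to\R^n$ for which $\varphi\mapsto\int_0^\cdot F(s,\varphi(s))\dd s$ is Borel on $(L^0,d_{KF})$; it contains products $g(s)h(v)$ with $h$ bounded continuous (there the map is continuous in measure by dominated convergence along a.e.\ convergent subsequences) and is closed under bounded pointwise limits, so it contains all bounded Borel $F$, and the Lipschitz dependence on $Y$ then handles the composed drift. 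Second, the uniform convergence of the Picard iterates holds only with a step size $\delta=\delta(R)$ on sets of the form $\{\vert\Xb\vert_\alpha\le R\}$; since these exhaust $\RP_T$ as a countable union of Borel sets, measurability of the limit on each piece suffices, but this restriction should be stated rather than left implicit in ``bounded subsets of parameter space.'' With these repairs your argument is complete.
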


\begin{proof}
	\begin{enumerate}[i)]
		\item For each $\phi\in \mathcal L^0([0,T],\R^m)$, existence and uniqueness follows from \parencite[Theorem 8.3]{RoughBook}, where we emphasize that there is no problem to include a drift coefficient $b$ satisfying the conditions above. A proof in the general setting of rough stochastic differential equations can be found in \parencite[Definition 4.1, Theorem 4.6]{frizRoughStochasticDifferential2024}. Then note that the the solution $Y$ is independent of the representative $\phi$.
		\item The a priori estimates for $\vert Y\vert_\alpha$ follow from including the drift in \parencite[Proposition 8.2]{RoughBook} (namely adding $\vert b\vert_\infty\vert t-s\vert$ in eq. $(8.3)$ in the proof). The estimate for $\vert R^{Y,X}\vert_{2\alpha}$ follows from iterating eq. $(8.5)$ with included drift ($\vert R^{Y,X}\vert_{2\alpha, h} \lesssim_{\vert b\vert_\infty, \vert f\vert_{C^2_b}} \vert \X\vert_{2\alpha, h} + \vert Y\vert_{2\alpha, h}^2 + 2h^{1-2\alpha}\lesssim (\vert\Xb\vert_\alpha + T^{1-2\alpha})^2$) in the manner of \parencite[Exercise 4.5]{RoughBook}, as done in \parencite[Lemma 3.2]{bielertRoughFunctionalIto2024}.
		\item Follows from the measurability results in the more involved setting of controlled rough stochastic differential equations applied with the measurable parameter space $U=L^0([0,T],\R^m)\times\RP_T$, $\mathfrak U = \borel_{\alpha}(L^0([0,T],\R^m))\otimes\mathfrak C_T$. By \parencite[Theorem 4.5]{frizControlledRoughSDEs2024}, the map $(s,\phi,\Xb)\mapsto Y^{t,y,\phi,\Xb}(s)$ is $\borel_{\vert\cdot\vert}([t,T])\otimes\borel_{\alpha}(L^0([0,T],\R^m))\otimes\mathfrak C_T - \borel_{\vert\cdot\vert}(\R^n)$-measurable (noting that the optional $\sigma$-algebra reduces to $\borel_{\vert\cdot\vert}([t,T])$). The claim follows since $Y^{t,y,\phi,\Xb}$ is continuous and $\borel_{\vert\cdot\vert_\infty}(C([0,T],\R^n))$ generated by the point evaluations.
	\end{enumerate}
\end{proof}

\subsection{Stochastic Dynamics with Anticipative Controls}\label{sec:condProblem:sec:StochDyn}
Let $W\colon\Omega\times [0,T]\to \R^d$ be a Brownian motion and $(\F_t)_{t\in[0,T]}$ denote the completed canonical filtration of $W$. We now randomize the controlled differential equation \eqref{cDE} to 
\begin{align}
	\begin{split}\label{cSDE}
		\dd Y(s) &= b(s, Y(s), \phi(s)) \dd s + f(Y(s)) \dd \Wb(s),\quad s\in (t, T],\\
		Y(t) &= y,
	\end{split}
\end{align}
where $\Wb$ denotes the Brownian rough path described in Example \ref{exmp:lift} and $\phi\colon\Omega\times[0,T]\to \R^m$ is a random control. The advantage of solving the equation pathwise as a rough differential equation is that it allows to consider anticipative controls $\phi$: Assume that at any point of time $t\in[0,T]$ one has partial knowledge of the future. We model the peek into the future of $W$ by a \textit{time-change}\index{interval@$\tau$} $\tau$,  that is by a continuous, non-decreasing map \[\tau\colon [0,T]\to [0,T]\] which is bounded from below by the \textit{identity} $\id\colon [0,T]\to[0,T]$ with $\tau(T)=T$. Then the information flow is given by the
\textit{time changed filtration} \[\F_\tau := (\F_{\tau(t)})_{t\in[0,T]}\] 
and the class of \hypertarget{adm}{\textit{admissible controls}} $\adm$ consist of all $\phi\colon\Omega\times[0,T]\to \R^m$ that are progressively measurable with respect to $\F_\tau$, i.e.\ for every $t\in[0,T]$, $\Omega\times[0,t]\ni(\omega, s)\mapsto \phi(\omega, s)$ is $\F_{\tau(t)}\otimes\borel([0,t])$-measurable. 
We emphasize that if a solution $Y^{t, y,\phi}$ of \eqref{cSDE} exists, it is adapted to the time-changed filtration $\F_\tau$ due to the anticipating controls $\phi\in\adm$. Hence integration of $f(Y^{t, y,\phi})$ against the Brownian motion $W$ is \textbf{not} covered by  It\^o integration.

\begin{exmp}\label{exmp:tau}
	\begin{enumerate}[i)]
		\item \parencite{insiderbank} considers a utility maximization problem with $\tau(t)= (t+\Delta)\wedge T$ for some $\Delta>0$.
		\item  The case $\tau(t) \equiv T$ corresponds to full knowledge of $W$ from the start. This case is considered in \parencite{buckdahnPathwiseStochasticControl2007} and recently in \parencite{frizControlledRoughSDEs2024}. They both include another Brownian motion $B$ that is independent of $W$ and known up to $t$ as in the classic stochastic control setting (note that in the present paper the roles of $B$ and $W$ are exchanged).
	\end{enumerate}
\end{exmp}

\begin{rem}[Measurability and Random Initial Data]
	Recalling the construction of Brownian rough paths in Example \ref{exmp:lift}, it is clear that on $ N$, it holds that $\int_t^. f(Y^{t,y,\phi})\dd\Wb = 0$ and on $N^C$, $\int_t^sf(Y^{t,y,\phi})\dd\Wb$ is $\F_{\tau(s)}$-measurable (as the pointwise limit of $\F_{\tau(t)}$-measurable random variables by Theorem \ref{theo:RI}). Since the filtration is complete this implies that $Y^{t,y, \phi}$ is adapted to the time-changed filtration $\F_\tau$.
	Note also that since \eqref{cSDE} is solved pathwise and since the dependece on the initial condition $y\in\R^n$ is continuous by \parencite[Theorem 8.5]{RoughBook} (applicable as in the proof of Theorem \ref{theo:sol2cSDE} part i)), $y\in\R^n$ may be replaced by a random initial condition. If $y\colon\Omega\to \R^n$ is $\F_{\tau(t)}$-measurable, then $Y^{t,y,\phi}$ is still adapted to $\F_\tau$. It is further reasonable to allow $y\in L^0(\Omega,\F_{\tau(t)}, \prob; \R^n)$. Indeed for representatives $\tilde y$ and $\bar y$ of $y$, $\vert Y^{t,\tilde y,\phi}-Y^{t,\tilde y,\phi}\vert_\infty \lesssim \vert\tilde y -\bar y\vert = 0$ $\prob$-as, hence the solutions are indistinguishable. 
\end{rem}

\subsection{Value Process}\label{sec:condProblem:sec:valuePro}
For a \textit{reward function} $g\colon \R^n \to [0, \infty)$, the \textit{stochastic optimal control problem with anticipative controls} is to maximize the random \textit{reward functional}
\begin{align*}
	J\colon [0,T]\times L^0(\Omega,\F, \prob; \R^n)\times \adm &\to L^0(\prob),\\
	(t,y,\phi)&\mapsto \expE \lb g(Y^{t,y,\phi}(T)) \mid \F_{\tau(t)} \rb.
\end{align*}
over the \hyperlink{adm}{admissible controls} $\adm$.
The \textit{value function} is then given by
\begin{align}\label{valuefct}
	\begin{split}
		v\colon [0,T]\times L^0(\Omega,\F, \prob; \R^n)&\to L^0(\prob),\\
		(t, y)&\mapsto \esssup_{\phi\in \adm} J(t,y,\phi),
	\end{split}
\end{align}
where the essential supremum is defined in \eqref{def:esssup}. Note that by construction $v(t,y)$ is $\F_{\tau(t)}$-measurable.

\section{Dynamic Programming Principle}\label{sec:dpp}
We prove a dynamic programming principle for the value function \eqref{valuefct} with respect to the time changed filtration $\F_\tau$ to obtain the martingale optimality principle. 

\begin{lem}\label{lem:updirec} For $y\in L^0(\Omega,\F_{\tau(s)}, \prob; \R^n)$, $t\in[0,T]$ and $s\in[t,T]$, the families
		\begin{align*}
			\lb \expE\lb g\lb Y^{s, y, \phi}(T)\rb\mid \F_{\tau(s)} \rb \rb_{\phi\in\adm},\quad \lb \expE\lb v\lb s, Y^{t, y, \phi}(s)\rb\mid \F_{\tau(t)} \rb \rb_{\phi\in\adm}
		\end{align*}
 		are closed under pairwise maximization.
\end{lem}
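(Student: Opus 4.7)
The plan is to establish closure under pairwise maximization by a standard pasting argument, leveraging the pathwise nature of rough differential equations to transfer a measurable selection of controls into a corresponding selection of state trajectories.

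Fix $\phi_1,\phi_2\in\mathcal{A}$. For the first family, introduce the event
\[
A := \big\{\, \mathbb{E}\big(g(Y^{s,y,\phi_1}(T))\mid \mathcal{F}_{\tau(s)}\big) \ge \mathbb{E}\big(g(Y^{s,y,\phi_2}(T))\mid \mathcal{F}_{\tau(s)}\big) \,\big\} \in \mathcal{F}_{\tau(s)},
\]
and define the pasted control
\[
\phi_3(\omega,r) := \phi_1(\omega,r)\mathbf{1}_{[0,s)}(r) + \big(\phi_1(\omega,r)\mathbf{1}_A(\omega) + \phi_2(\omega,r)\mathbf{1}_{A^c}(\omega)\big)\mathbf{1}_{[s,T]}(r).
\]
To see that $\phi_3\in\mathcal{A}$, I would check progressive measurability w.r.t.\ $\mathcal{F}_\tau$: for any $u\in[0,T]$, the restriction of $\phi_3$ to $\Omega\times[0,u]$ agrees with $\phi_1$ if $u<s$ and otherwise involves the indicator of $A$, which lies in $\mathcal{F}_{\tau(s)}\subseteq\mathcal{F}_{\tau(u)}$, so joint measurability follows from that of $\phi_1$ and $\phi_2$.

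The key step is to exploit that \eqref{cSDE} is solved pathwise as a rough differential equation: by Theorem \ref{theo:sol2cSDE}, the map $(\varphi,\mathbf{X})\mapsto Y^{s,y,\varphi,\mathbf{X}}$ is deterministic and depends on the control only through its values on $[s,T]$. Hence on $A$ the pair $(\phi_3,\mathbf{W})(\omega)$ coincides with $(\phi_1,\mathbf{W})(\omega)$ on $[s,T]$, and similarly with $(\phi_2,\mathbf{W})(\omega)$ on $A^c$; this pathwise identity yields $\mathbf{1}_A g(Y^{s,y,\phi_3}(T)) = \mathbf{1}_A g(Y^{s,y,\phi_1}(T))$ and symmetrically on $A^c$. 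Pulling the $\mathcal{F}_{\tau(s)}$-measurable indicators out of the conditional expectation gives
\[
\mathbb{E}\big(g(Y^{s,y,\phi_3}(T))\mid \mathcal{F}_{\tau(s)}\big) = \mathbb{E}\big(g(Y^{s,y,\phi_1}(T))\mid \mathcal{F}_{\tau(s)}\big) \vee \mathbb{E}\big(g(Y^{s,y,\phi_2}(T))\mid \mathcal{F}_{\tau(s)}\big),
\]
proving the first claim.

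For the second family, the argument is structurally identical but one conditions on $\mathcal{F}_{\tau(t)}$ and compares the objects $\mathbb{E}(v(s,Y^{t,y,\phi_i}(s))\mid\mathcal{F}_{\tau(t)})$. One sets $A\in\mathcal{F}_{\tau(t)}$ to be the corresponding comparison event and pastes as above from time $t$ onward; since $A\in\mathcal{F}_{\tau(t)}\subseteq\mathcal{F}_{\tau(u)}$ for every $u\ge t$, the pasted control is again admissible. The pathwise argument now yields $Y^{t,y,\phi_3}(s) = \mathbf{1}_A Y^{t,y,\phi_1}(s) + \mathbf{1}_{A^c}Y^{t,y,\phi_2}(s)$ on all of $\Omega$, and because $v(s,\cdot)$ is merely applied to this $\mathcal{F}_{\tau(s)}$-measurable random variable, the same indicator-splitting gives $v(s,Y^{t,y,\phi_3}(s)) = \mathbf{1}_A v(s,Y^{t,y,\phi_1}(s)) + \mathbf{1}_{A^c} v(s,Y^{t,y,\phi_2}(s))$; conditioning on $\mathcal{F}_{\tau(t)}$ and pulling out $\mathbf{1}_A$ finishes the proof.

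The main obstacle I anticipate is verifying admissibility of $\phi_3$ carefully, specifically ensuring the splitting indicator lives in the correct $\sigma$-algebra of the time-changed filtration; everything else is a consequence of the pathwise nature of the rough integration set up in Theorem \ref{theo:sol2cSDE}, which is precisely what makes the pasting compatible with the anticipative information structure.
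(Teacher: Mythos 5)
Your argument for the first family is correct and is essentially the paper's argument: paste the two controls on the $\mathcal{F}_{\tau(s)}$-measurable comparison event, use the pathwise (hence local on events) nature of the rough differential equation to split $Y^{s,y,\phi_3}(T)$, and pull the indicators out of the conditional expectation.

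For the second family there is a genuine gap. You write that ``because $v(s,\cdot)$ is merely applied to this $\mathcal{F}_{\tau(s)}$-measurable random variable, the same indicator-splitting gives $v(s,Y^{t,y,\phi_3}(s)) = \mathbf{1}_A v(s,Y^{t,y,\phi_1}(s)) + \mathbf{1}_{A^c} v(s,Y^{t,y,\phi_2}(s))$.'' But $v(s,\cdot)$ is not a deterministic function of the state composed with a random variable: by \eqref{valuefct} it is an essential supremum over $\adm$ of conditional expectations, defined on random initial conditions with values in $L^0(\prob)$. The identity $v(s,\mathbf{1}_A Y_1+\mathbf{1}_{A^c}Y_2)=\mathbf{1}_A v(s,Y_1)+\mathbf{1}_{A^c}v(s,Y_2)$ is a locality property of this essential supremum and is precisely the nontrivial content of the second half of the lemma. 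The inequality ``$\le$'' does follow from the pathwise splitting of $Y^{s,\cdot,\nu}$ for each fixed $\nu$ together with the definition of the essential supremum. The reverse inequality ``$\ge$'' requires combining near-optimal controls for the two pieces into a single admissible control: one needs monotone maximizing sequences $(\nu^{\phi}_n)$, $(\nu^{\psi}_n)$ for $v(s,Y^{t,y,\phi_1}(s))$ and $v(s,Y^{t,y,\phi_2}(s))$ — which exist exactly because the first family is closed under pairwise maximization (via \parencite[Theorem A.3]{karatzasMethodsMathematicalFinance1998}) — and then the pasted controls $(\nu^{\phi}_n\mathbf{1}_A+\nu^{\psi}_n\mathbf{1}_{A^c})\mathbf{1}_{[t,T]}$ show that the right-hand side is dominated by $v(s,Y^{t,y,\phi_3}(s))$. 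Without this step your proof of the second claim does not go through; note also that this is the reason the lemma treats the first family before the second.
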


\begin{proof}
	We first argue that the family
	\begin{align}\label{theo:dpp:eq:2}
		\lb \expE\lb g\lb Y^{s, y, \phi}(T)\rb\mid \F_{\tau(s)} \rb \rb_{\phi\in\adm}
	\end{align}
	is closed under pairwise maximization. Let $\phi, \psi\in\adm$, choose representatives
	\begin{align*}
		X^\phi \in \expE\lb g\lb Y^{s, y, \phi}(T)\rb\mid \F_{\tau(s)} \rb,\quad X^\psi \in \expE\lb g\lb Y^{s, y, \psi}(T)\rb\mid \F_{\tau(s)} \rb
	\end{align*} 
	and consider $A := \{X^\phi > X^\psi\}\in\F_{\tau(s)}$. Then $\xi := (\phi\one_A + \psi\one_{A^c})\one_{[s,T]}\in\adm$ and the induced
    controlled dynamics yield
	\begin{align*}
		Y^{s,y,\xi}(T) = Y^{s,y,\phi}(T)\one_A + Y^{s, y, \psi}(T) \one_{A^c}.
	\end{align*}
	Thus we obtain our first claim since
	\begin{align*}
		X^\phi\vee X^\psi\in \expE\lb g\lb Y^{s,y,\phi}(T)\rb\one_A + g\lb Y^{s,y,\psi}(T)\one_{A^C}\rb\mid \F_{\tau(s)}\rb =\expE\lb g\lb Y^{s,y,\xi}(T)\rb\mid \F_{\tau(s)}\rb.
	\end{align*}

	We next show that for $y\in\R^n$ the family
	\begin{align}\label{theo:dpp:eq:3}
		\lb \expE\lb v\lb s, Y^{t, y, \phi}(s)\rb\mid \F_{\tau(t)} \rb \rb_{\phi\in\adm}
	\end{align}
	is closed under maximization, too. Indeed using the same idea as above, we consider for representatives 
	\begin{align*}
		X^\phi \in \expE\lb v\lb s, Y^{t, y, \phi}(s)\rb\mid \F_{\tau(t)} \rb,\quad X^\psi \in \expE\lb v\lb s, Y^{t, y, \psi}(s)\rb\mid \F_{\tau(t)} \rb,
	\end{align*}  
	the set $A:=\{X^\phi > X^\psi\}\in\F_{\tau(t)}$ to get
	$\xi:= (\phi\one_A + \psi\one_{A^c})\one_{[t,T]}\in\adm$.
	On the one hand, it holds that
	\begin{align}\label{theo:dpp:eq:4}
		v(s, Y^{t,y,\xi}(s)) &= \esssup_{\nu\in\adm} \expE\lb g\lb Y^{s, Y^{t,y,\phi}(s),\nu}(T) \rb \mid \F_{\tau(s)}\rb\one_A + \expE\lb g\lb Y^{s, Y^{t,y,\psi}(s),\nu}(T) \rb \mid \F_{\tau(s)}\rb\one_{A^c}\notag\\
		&\le v(s, Y^{t,y,\phi}(s))\one_A + v(s, Y^{t,y,\psi}(s))\one_{A^c}.
	\end{align}
	On the other hand, since the family \eqref{theo:dpp:eq:2} is closed under pairwise maximization, we find by \parencite[Theorem A.3]{karatzasMethodsMathematicalFinance1998} (or combine \parencite[Lemma 4]{lowtherEssentialSuprema2019} with being closed under maximization for a version in $L^0$) sequences $(\nu^\phi_n)$, $(\nu^\psi_n)\in\adm$ such that 
	\begin{align*}
		\expE\lb g\lb Y^{s, Y^{t,y,\phi}(s), \nu^\phi_n}(T)\rb \mid \F_{\tau(s)}\rb \nearrow v(s, Y^{t,y,\phi}(s))
	\end{align*}
	as $n\to\infty$ and similar for $\psi$. Thus setting $\xi^n = (\nu^\phi_n\one_A + \nu^\psi_n\one_{A^c})\one_{[t,T]}\in \adm$ it holds that
	\begin{align*}
		&v(s, Y^{t,y,\phi}(s))\one_A + v(s, Y^{t,y,\psi}(s))\one_{A^c}\\
		&= \lim_{n\to\infty} \expE\lb g\lb Y^{s, Y^{t,y,\phi}(s), \nu^\phi_n}(T)\rb \mid \F_{\tau(s)}\rb\one_B + \lim_{n\to\infty}\expE\lb g\lb Y^{s, Y^{t,y,\psi}(s), \nu^\psi_n}(T)\rb \mid \F_{\tau(s)}\rb\one_{B^c}\\
		&= \lim_{n\to\infty} \expE\lb g\lb Y^{s,Y^{t,y,\xi}(s),\xi_n}\rb \mid \F_{\tau(s)}\rb
		\le v(s, Y^{t,y,\xi}(s)).
	\end{align*}
	Hence we showed equality in \eqref{theo:dpp:eq:4}. This proves the claim in the same manner as before since by construction  $A\in\F_{\tau(t)}$.
\end{proof}

\begin{theo}[Dynamic Programming Principle]\hypertarget{dpp}
	Under the conditions of Theorem \ref{theo:sol2cSDE} the value function satisfies a dynamic programming principle: for $y\in L^0(\Omega,\F_{\tau(t)},\prob; \R^n)$ and $t\le s\in[0,T]$ it holds that 
	\begin{align*}
		v(t,y) = \esssup_{\phi\in\adm} \expE\lb v\lb s, Y^{t, y, \phi}(s) \rb\mid \F_{\tau(t)}\rb.
	\end{align*}
\end{theo}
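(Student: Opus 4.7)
The plan is to establish the two inequalities of the dynamic programming principle separately, leveraging the upward directedness from Lemma \ref{lem:updirec} together with the pathwise flow property of the rough differential equation \eqref{cSDE} provided by Theorem \ref{theo:sol2cSDE}. Throughout, I will use that $\tau$ is non-decreasing, so $\F_{\tau(t)}\subset\F_{\tau(s)}$ and the tower property applies.

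For the inequality $v(t,y)\le \esssup_{\phi\in\adm}\expE\lb v(s, Y^{t,y,\phi}(s))\mid \F_{\tau(t)}\rb$, I would fix $\phi\in\adm$ and invoke the pathwise flow property $Y^{t,y,\phi}(T) = Y^{s, Y^{t,y,\phi}(s), \phi}(T)$, which follows from uniqueness in Theorem \ref{theo:sol2cSDE}(i) applied for each fixed Brownian rough path to the two $\alpha$-H\"older controlled rough path solutions started at $(t,y)$ and at $(s, Y^{t,y,\phi}(s))$. Since $Y^{t,y,\phi}(s)$ is $\F_{\tau(s)}$-measurable by the remark following Theorem \ref{theo:sol2cSDE}, applying the tower property gives
$$J(t,y,\phi) = \expE\lb \expE\lb g(Y^{s, Y^{t,y,\phi}(s), \phi}(T))\mid \F_{\tau(s)}\rb\mid \F_{\tau(t)}\rb \le \expE\lb v(s, Y^{t,y,\phi}(s))\mid \F_{\tau(t)}\rb,$$
where the bound comes from the definition of $v$ at $(s, Y^{t,y,\phi}(s))$ taken inside the conditional expectation. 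Taking the essential supremum over $\phi\in\adm$ yields the first direction.

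For the converse, I would fix $\phi\in\adm$ and apply Lemma \ref{lem:updirec} at time $s$ with random initial datum $Y^{t,y,\phi}(s)\in L^0(\Omega,\F_{\tau(s)},\prob;\R^n)$. The upward directedness of the family $\{\expE\lb g(Y^{s, Y^{t,y,\phi}(s), \nu}(T))\mid \F_{\tau(s)}\rb\}_{\nu\in\adm}$ together with the exhaustion result of \parencite[Theorem A.3]{karatzasMethodsMathematicalFinance1998} produces a sequence $(\nu_n)\subset\adm$ with
$$\expE\lb g(Y^{s, Y^{t,y,\phi}(s), \nu_n}(T))\mid \F_{\tau(s)}\rb \nearrow v(s, Y^{t,y,\phi}(s))\quad \prob\text{-a.s.}$$
Define the concatenated control $\xi_n := \phi\one_{[0,s)} + \nu_n\one_{[s,T]}$, which is again in $\adm$ because both pieces are $\F_\tau$-progressive. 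By the flow property once more, $Y^{t,y,\xi_n}(T) = Y^{s, Y^{t,y,\phi}(s), \nu_n}(T)$, so tower property and conditional monotone convergence (legitimate since $g\ge 0$) give
$$v(t,y)\ge J(t,y,\xi_n) = \expE\lb \expE\lb g(Y^{s, Y^{t,y,\phi}(s), \nu_n}(T))\mid \F_{\tau(s)}\rb\mid \F_{\tau(t)}\rb \nearrow \expE\lb v(s, Y^{t,y,\phi}(s))\mid \F_{\tau(t)}\rb.$$
Hence $v(t,y)\ge \expE\lb v(s, Y^{t,y,\phi}(s))\mid \F_{\tau(t)}\rb$ for every $\phi\in\adm$, and a final application of Lemma \ref{lem:updirec} together with the exhaustion from \parencite[Theorem A.3]{karatzasMethodsMathematicalFinance1998} promotes this dominance to the essential supremum.

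The principal obstacle is verifying the pathwise flow property and admissibility for solutions with the anticipative $\F_{\tau(s)}$-measurable initial datum $Y^{t,y,\phi}(s)$, because the control $\phi$ already incorporates future knowledge of $W$. The resolution is to solve \eqref{cSDE} pathwise for each fixed Brownian rough path $\Wb(\omega)$ and then invoke the measurability of the solution map in Theorem \ref{theo:sol2cSDE}(iii), which turns the apparently anticipative concatenation argument into a routine deterministic splicing at time $s$. A secondary technical point is the validity of Lemma \ref{lem:updirec} with random initial data in $L^0(\Omega,\F_{\tau(s)},\prob;\R^n)$; its proof extends verbatim since the closure-under-maximization argument only uses $A\in\F_{\tau(s)}$ and never the deterministic nature of $y$.
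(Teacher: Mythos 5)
Your proposal is correct and follows essentially the same route as the paper: the flow property plus the tower property for the easy inequality, and upward directedness (Lemma \ref{lem:updirec}) with maximizing sequences, concatenated controls, and conditional monotone convergence for the converse. The only difference is organizational — the paper runs a doubly indexed maximizing sequence $(\phi_n,\psi_{n,m})$ for the outer and inner suprema simultaneously, whereas you fix $\phi$, exhaust only the inner supremum, and then invoke the least-upper-bound property of the essential supremum, which is a slightly leaner but equivalent bookkeeping.
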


\begin{proof}
Note that since $g\ge 0$, $v\ge 0$, so all conditional expectations belong to $L^0(\Omega, \F_\tau, \prob)$ and may take the value $+\infty$. By Theorem \ref{theo:sol2cSDE} the solution $Y^{t,y,\phi}$ is unique, hence for $t\le s$ it holds on $[s,T]$ that 
\begin{align}\label{flowprop}
	Y^{t, y, \phi} = Y^{s, Y^{t, y, \phi}(s), \phi}.
\end{align}

We first prove ``$\le$''. For every $\phi\in\adm$ it is clear by the flow property \eqref{flowprop} that
	\begin{align*}
		v(s, Y^{t, y, \phi}(s)) \ge \expE\lb g\lb Y^{s,Y^{t, y, \phi}(s), \phi}(T)\rb \mid \F_{\tau(s)}\rb = \expE\lb g\lb Y^{t, y, \phi}(T)\rb \mid \F_{\tau(s)}\rb.
	\end{align*}
	By the properties of the essential supremum \eqref{def:esssup} and using the tower property of conditional expectations it follows that
	\begin{align*}
		\esssup_{\psi\in\adm} \expE\lb v\lb s, Y^{t, y, \psi}(s) \rb\mid \F_{\tau(t)}\rb
		\ge \expE\lb \expE\lb g\lb Y^{t, y, \phi}(T)\rb \mid \F_{\tau(s)}\rb\mid \F_{\tau(t)}\rb
		=\expE\lb g\lb Y^{t, y, \phi}(T)\rb \mid \F_{\tau(t)}\rb.
	\end{align*}
Moreover \eqref{def:esssup} with $Z = \esssup_{\psi\in\adm} \expE\lb v\lb s, Y^{t, y, \psi}(s) \rb\mid \F_{\tau(t)}\rb$ implies that
	\begin{align*}
		\esssup_{\psi\in\adm} \expE\lb v\lb s, Y^{t, y, \psi}(s) \rb\mid \F_{\tau(t)}\rb \ge v(t, y).
	\end{align*}

To show ``$\ge$'', we use Lemma \ref{lem:updirec} (see also its proof for references) to find sequences $(\phi_n), (\psi_{n,m})\subset\adm$ such that, as $n\to\infty$,
	\begin{align}\label{eq:N_1}
		\expE\lb v\lb s, Y^{t, y, \phi_n}(s) \rb \mid \F_{\tau(t)}\rb \nearrow \esssup_{\phi\in\adm} \expE\lb v\lb s, Y^{t, y, \phi}(s) \rb\mid \F_{\tau(t)}\rb
	\end{align}
	and for each $n$ as $m\to\infty$ that
	\begin{align}\label{eq:N_n}
		\expE\lb g\lb Y^{s, Y^{t, y, \phi_n}, \psi_{n,m}}(T) \rb \mid \F_{\tau(s)}\rb \nearrow v\lb s, Y^{t, y, \phi_n}(s) \rb.
	\end{align}
	Then $\gamma_{n,m} = \phi_n \one_{[0,s]} + \psi_{n,m}\one_{(s, T]}\in\adm$. The flow property \eqref{flowprop} now guarantees that \[ Y^{s,Y^{t, y, \phi_n}(s), \psi_{n,m}} = Y^{t, y, \gamma_{n,m}}.\] Together with monotone convergence for conditional expectations (e.g. version \parencite[Theorem 4.3.2]{dudley} for nonnegative random variables) and \eqref{def:esssup} we conclude that 
	\begin{align*}
		\esssup_{\phi\in\adm} \expE\lb v\lb s, Y^{t, y, \phi}(s) \rb\mid \F_{\tau(t)}\rb 
		&= \lim_{n\to\infty} \expE\lb v\lb s, Y^{t, y, \phi_n}(s) \rb \mid \F_{\tau(t)}\rb\\
		&=  \lim_{n, m\to\infty}  \expE\lb \expE\lb g\lb Y^{s, Y^{t,y,\phi_{n,m}}, \psi_m}(T) \rb \mid \F_{\tau(s)}\rb \mid \F_{\tau(t)}\rb\\
		&= \lim_{n, m\to\infty} \expE\lb g(Y^{t, y, \gamma_{n,m}}(T))\mid \F_{\tau(t)}\rb\\
		&\le v(t,y).
	\end{align*}
\end{proof}

\begin{rem}[Supermartingale Dynamic]\label{rem:valuefctsupermartingale}
	The previous theorem immediately implies a supermartingale dynamic of $[t,T]\ni s\mapsto v(s, Y^{t,y,\phi}(s))$ with respect to $\F_\tau$: For any $s_1 \le s_2 \in [t,T]$, the dynammic programming principle shows that in $L^0$,
	\begin{align*}
		v(s_1, Y^{t, y, \phi}(s_1)) 
		\ge \expE\lb  v(s_2, Y^{s_1, Y^{t, y, \phi}, \phi}(s_2)) \mid \F_{\tau(s_1)}\rb
		= \expE\lb  v(s_2, Y^{t, y, \phi}(s_2)) \mid \F_{\tau(s_1)}\rb,
	\end{align*}
	again by uniqueness of the solution $Y^{t, y, \phi}$.
\end{rem}

\section{Hamilton-Jacobi-Bellmann Equation}\label{sec:hjb}

From now on assume that the reward function $g$ is bounded and that the time change $\tau$ is continuously differentiable and satisfies $\tau(t)>t$ for $t\in[0,T)$. By definition \eqref{valuefct} the value function satisfies $v(t,y)\in\F_{\tau(t)}$ and is therefore a functional of $(W(s))_{s \in [0,\tau(t)]}$. To describe this functional, we want to use the concept of causal maps and their derivatives from Dupire, see Section \ref{sec:func}. A pathdependent functional is causal if it depends at time $t$ solely on the path until $t$. In that sense the value function is a causal functional not of $W$, but of the time changed Brownian motion $\Wtau$, defined by 
\begin{align}\label{Wtau}
	\begin{split}
		\Wtau\colon\Omega\times [0,T]&\to \R^d,\\
		(\omega, t)&\mapsto W(\omega,\tau(t)) -W(\omega,\tau(0)).
	\end{split}
\end{align} 
To capture the entire path of $W$ on $[0,\tau(0)]$, we supplement the functional with the independent initial segment $W^{\tau(0)}:=(W(s))_{s \in [0,\tau(0)]}$. Note that $W^{\tau(0)}$ act as parameter to the problem that is fixed from the start.
As the state dynamics~\eqref{cSDE} are Markovian in the sense that the coefficients only depend on the current state, it is in fact natural to expect that $v(t,y)$ will be independent of $\mathcal F_t$; see e.g.\ \parencite[Proposition 4]{claissePseudoMarkovPropertyControlled2016}, \parencite[Lemma 5.5.]{frizControlledRoughSDEs2024}. 
Writing $v(t,y)$ as a functional of Brownian increments $(W(s)-W(t), u\in(u,\tau(t)])$ would result in time-varying domains for such a functional. This is not directly compatible with Dupire's framework of causal functionals. For this reason we prefer the framework with $\Wtau$ and $\Wtaunull$, even though it keeps track of `too much' information. Nevertheless, one problem remains in this framework: While it is easy to construct the Brownian increments from the processes $\Wtaunull$ and $\Wtau$, Example \ref{exmp:funcdiff} part iii) below shows that such a reconstruction is not causally differentiable in time. The problem can be resolved by extending the state space of the controlled differential equation to track $(Y^{t,y,\phi}, W)$, compare the examples in Section \ref{sec:exmp}. 
Summing up we expect that
\begin{align*}
    v(t,y) = u(t,y,W(t),\Wtaunull,\Wtau)
\end{align*}
for some mapping
\begin{align*}
	u\colon[0,T]\times \R^n\times \R^d \times C([0,\tau(0)],\R^d) \times D([0,T],\R^d)&\to\R\\
    (t,y,w,\w,\z) \mapsto u(t,y,w,\w,\z).
\end{align*}
Assuming (!) that this map is regular enough we show that it is characterized by the following HJB  equation: For $\prob^{W}$-\as\ $\w\in C([0,T],\R^d)$, all $t\in[0,T)$, $y\in\R^n$, 
	\begin{align}
		\begin{split}
			\begin{cases}
				0 &= Du(t,y,\w(t),\wtaunull,\wtau) + \sup_{\varphi\in\R^m}\ \la\nabla_y u(t,y,\w(t),\wtaunull,\wtau), b(t,y,\varphi)\ra\\ 
				&\quad+ \frac12 \tr(f(y)^T\nabla^2_y u(t,y,\w(t),\wtaunull,\wtau) f(y)) + \tr(f(y)^T\nabla^2_{yw} u(t,y,\w(t),\wtaunull,\wtau))\\ 
                &\quad + \frac12 \tr(\nabla_{w}^2 u(t,y,\w(t),\wtaunull,\wtau)) + \frac12\tr(\nabla_\z^2 u(t,y,\w(t),\wtaunull,\wtau))\tau'(t),\\
                0 &= \la\nabla_y u(t,y,\w(t),\wtaunull,\wtau), f(y)\ra + \nabla_w u(t,y,\w(t),\wtaunull,\wtau),\\
				g(y) &= u(T,y,\w(t),\wtaunull,\wtau).
			\end{cases}
		\end{split}
	\end{align}
    Here, $ \nabla_y u, \nabla_y^2u,\nabla_wu,\nabla_w^2u, \nabla_{yw}^2u$ denote the Euclidean derivatives with respect to the second and third argument of $u$. Moreover $\wtau$ and $\wtau$ are constructed from $\w$ as in \eqref{Wtau}. Further  $Du$, $\nabla_\z u$ denote the \textit{causal time and space derivative} with respect to $\z\in C([0,T],\R^d)$  defined in the forthcoming Section
\ref{sec:func}. We recall from Section \ref{sec:notation} that
\begin{align*}
    \la\nabla_y u(t,y,\w(t),\wtaunull,\wtau), f(y)\ra = f(y)^T\nabla_y u(t,y,\w(t),\wtaunull,\wtau) \in\R^d.
\end{align*} 
Hence the second equation abbreviates in fact $d$ transport equations.
    
For notational simplicity we combine $(Y^{t,y,\phi},W)$ into a generic $Y$ and work with the following formulation: 
We say that a functional 
\begin{align*}
	u\colon[0,T]\times \R^n\times C([0,\tau(0)],\R^d) \times D([0,T],\R^d)\to\R
\end{align*}
satisfies the \textit{Hamilton-Jacobi-Bellman equation}, if for $\prob^{W}$-\as\ $\w\in C([0,T],\R^d)$, 
\begin{align*}
	u(\cdot, \wtaunull, \cdot)\colon [0,T]\times \R^n\times\ D([0,T],\R^d)\to \R
\end{align*}
is a \textit{causal map} such that for every $t\in [0,T)$, $y\in\R^n$, it holds that
\begin{align}\label{hjb}
	\begin{split}
		\begin{cases}
			0 &= Du(t,y,\wtaunull,\wtau) + \sup_{\varphi\in\R^m}\ \la\nabla_y u(t,y,\wtaunull,\wtau), b(t,y,\varphi)\ra\\ 
			&\qquad\qquad+ \frac12 \tr(f(y)^T\nabla^2_y u(t,y,\wtaunull,\wtau)f(y)) + \frac12 \tr(\nabla_\z^2 u(t,y,\wtaunull, \wtau))\tau'(t),\\
            0 &= \la\nabla_y u(t,y,\wtaunull,\wtau), f(y)\ra,\\
			g(y) &= u(T, y, \wtaunull, \wtau).
		\end{cases}
	\end{split}\tag{HJB}
\end{align}
Theorem \ref{theo:verification} and \ref{theo:regularvaluesolvesHJB} show in what sense the value function \eqref{valuefct} is characterized by \eqref{hjb}.

\subsection{Quadratic Variation}\label{sec:sec:qv} To fit the common It\^o dynamics of stochastic optimal control, 
we calculate $[W,\Wtau]$ using the notion of pathwise quadratic variation along a sequence of partitions from \parencite[Definition 4.1]{contPathwiseIntegrationChange2019}.\\

Define $\mathscr M([0,T], \Sym{d})$ as the space of (vector-valued) countably additive, finite measures on $\borel([0,T])$ with values in the space of symmetric matrices $\Sym{d}$.
We write $\mu_n\rightharpoonup\mu$, if $\mu_n$ converges weakly to $\mu$, i.e., for all $h\in C([0,T], \Sym{d})$, $\la h,\mu_n\ra\to\la h,\mu\ra$ as $n\to\infty$. 
Here the dual pairing is given by integration $\la h,\mu\ra =\int_0^T \la h,\dd\mu\ra$. 
Since $h$ is continuous, this reduces to integration of the real coordinates $h = (h^{i,j})_{i,j=1,\dots,d}$ to the respective signed measures $\mu = (\mu^{i,j})_{i, j=1,\dots, d}$:
\begin{align}
    \int_0^T \la h,\dd\mu\ra = \sum_{i,j=1,\dots,d} \int_0^T h^{i,j}\dd\mu^{i,j}.
\end{align}

\begin{defi}\label{defi:qv}
	A path $x\in C([0,T], \R^d)$ has finite quadratic variation along a sequence of partitions $\partition = (\partition_n)$ of $[0,T]$ with $\vert\partition\vert\to 0$ if the sequence 
	\begin{align*}
		\mu_n := \sum_{[s,t]\in\partition_n} \delta(\cdot-s)(x(t)-x(s))^{\otimes 2}
	\end{align*}
	converges weakly in $\mathscr M([0,T], \Sym{d})$ to a measure $\mu^x$ without atoms. 
	In that case, we say that $x$ has finite quadratic variation (along $\partition$) and set $[x]\colon [0,T]\to\Sym{d}$, $[x](t):= \mu^x([0,t])$ for its quadratic variation.
\end{defi}

We next consider the quadratic variation of the Brownian motion $W$ and the time-changed process $\Wtau$.
Thereby we use the following version of a classic result linking weak convergence of measures to pointwise convergence of their distribution functions.

\begin{theo}[Weak Convergence and Distribution Functions]\label{theo:weakconvergence}
	Let $\mu_n,\mu\colon\borel(\R)\to\R$ be finite signed measures such that the sequence $(\mu_n)$ is bounded in total variation norm and let $F_n, F$ denote their corresponding distributions functions. If $(F_n)$ converges pointwise to $F$ on a set $C$ that is dense in $\R$, then $\mu_n\rightharpoonup\mu$.
\end{theo}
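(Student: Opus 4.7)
The approach is a classical Helly-type argument adapted to finite signed measures via the total variation bound: approximate continuous test functions uniformly by step functions whose jump points lie in $C$, pass to the limit termwise using $F_n \to F$ on $C$, and control the approximation error through the hypothesis $M := \sup_n|\mu_n|(\R) + |\mu|(\R) < \infty$. In the setting of Section~\ref{sec:sec:qv} the measures are concentrated on $[0,T]$ and it suffices to show $\la h,\mu_n\ra \to \la h,\mu\ra$ for every $h \in C([0,T],\Sym{d})$; a componentwise reduction leaves the scalar case.

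Given $\varepsilon > 0$, uniform continuity of $h$ on $[0,T]$ yields $\delta > 0$ with $|h(s) - h(t)| < \varepsilon$ whenever $|s-t| < \delta$, and density of $C$ allows to choose a partition $t_0 < t_1 < \cdots < t_k$ with $t_i \in C$, $t_0 < 0$, $t_k > T$, and $t_i - t_{i-1} < \delta$. The step function $h_\varepsilon := \sum_{i=1}^k h(t_i)\one_{(t_{i-1},t_i]}$ (extending $h$ by $0$ outside $[0,T]$) satisfies $\|h - h_\varepsilon\|_\infty \leq \varepsilon$, so
\begin{align*}
    \left\vert \int h\dd\nu - \int h_\varepsilon\dd\nu \right\vert \leq \varepsilon\,|\nu|(\R) \leq \varepsilon M \quad \text{for } \nu \in \{\mu_n, \mu\}.
\end{align*}
On the other hand, the step-function integral telescopes as
\begin{align*}
    \int h_\varepsilon\dd\mu_n = \sum_{i=1}^k h(t_i)\bigl[F_n(t_i) - F_n(t_{i-1})\bigr] \longrightarrow \sum_{i=1}^k h(t_i)\bigl[F(t_i) - F(t_{i-1})\bigr] = \int h_\varepsilon\dd\mu
\end{align*}
by the pointwise convergence $F_n(t_i) \to F(t_i)$ at each $t_i \in C$. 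A triangle inequality then yields $\limsup_n |\la h,\mu_n\ra - \la h,\mu\ra| \leq 2\varepsilon M$; as $\varepsilon \downarrow 0$, the conclusion follows.

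The main technical point I anticipate is the placement of partition points in $C$; density makes this immediate. A minor subtlety is that if the limit $F$ has jumps, those jump points must either lie in $C$ (where the hypothesis applies directly) or be avoided — and in either case the step function captures the correct atomic mass via the half-open intervals $(t_{i-1},t_i]$, provided the $t_i$ are in $C$. For the general statement on $\R$ with non-compactly supported test functions, an additional uniform tightness argument controlling tail mass would be needed, but this is not required for the intended application to measures on the compact interval $[0,T]$.
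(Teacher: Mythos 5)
Your argument is correct in substance and is exactly the classical Helly--Bray computation; the paper itself does not spell out a proof but simply cites \parencite[Chapter 2, Theorem 1]{gnedenko1968limit} for the case of (positive) measures and asserts that the adjustment to signed measures bounded in total variation is routine. Your write-up is precisely that routine adjustment: approximate $h$ by a step function with jump points in the dense set $C$, pass to the limit termwise via $F_n(t_i)\to F(t_i)$, and absorb the approximation error into $\varepsilon M$. You are also right that the statement as literally formulated on all of $\R$ would need a tightness argument for general bounded continuous test functions, and that this is vacuous in the paper's application since all measures are carried by $[0,T]$. One small slip: with $t_k>T$ and $h$ extended by $0$, your step function takes the value $h(t_k)=0$ on $(t_{k-1},t_k]$, so $\vert h-h_\varepsilon\vert$ on $(t_{k-1},T]$ is $\vert h\vert$ rather than at most $\varepsilon$ (and similarly near $0$); since $\vert\mu_n\vert$ may charge $(t_{k-1},T]$ with mass that does not vanish as the mesh shrinks, you should instead evaluate $h$ at a point of $[t_{i-1},t_i]\cap[0,T]$ (e.g.\ use $h(T)$ on the last interval), after which uniform continuity of $h$ on $[0,T]$ gives $\sup_{[0,T]}\vert h-h_\varepsilon\vert\le\varepsilon$ and the rest of your estimate goes through unchanged.
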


\begin{proof}
	A proof for sequences of measures can be found in \parencite[Chapter 2, Theorem 1]{gnedenko1968limit}. Moreover the proof is easily adjusted to sequences of distribution functions of signed measures, if they are bounded in total variation norm. 
\end{proof}

\begin{lem}\label{lem:2varW} Let $\tau\colon [0,T]\to[0,T]$ be a time-change with $\tau>\id$ on $[0,T)$ and $W\colon\Omega\times[0,T]\to\R^d$ a Brownian motion. Let $\Wtau$ denote the time-change Brownian motion \eqref{Wtau}. Then there exists for every increasing sequence of partitions $\partition$ with $\vert\partition\vert\to 0$ a (not relabeled) subsequence such that for $\prob$-\as\ $\omega\in\Omega$, $(W(\omega), \Wtau(\omega))$ has finite quadratic variation and 
	\begin{align*}
		[(W(\omega),\Wtau(\omega))] = \diag(\id\one_d, (\tau-\tau(0))\one_d),
	\end{align*}
 where $\one_d$ denotes the vector of ones in $\R^d$.
\end{lem}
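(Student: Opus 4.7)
My plan is to show that for every $t$ in a countable dense subset $D\subset[0,T]$ and every entry $(i,j)$ of the candidate limit matrix, the corresponding entry of $\mu_n([0,t])$ converges in $L^2(\prob)$. A standard diagonal Borel--Cantelli extraction then produces one subsequence along which all these convergences hold simultaneously almost surely on $D$. Since each entry of $\mu_n$ is a signed measure whose total variation is controlled by the diagonal variations of $W$ and $\Wtau$ (via Cauchy--Schwarz), Theorem \ref{theo:weakconvergence} will upgrade pointwise convergence of the distribution functions on $D$ to weak convergence of the matrix-valued measures. The limit has distribution function $t\mapsto\diag(t\,I_d,(\tau(t)-\tau(0))I_d)$ and is atomless since $\tau\in C^1$ makes it absolutely continuous w.r.t.\ Lebesgue.

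The block structure reduces the task to three sums,
\begin{align*}
A_n^{ij}(t)&:=\sum_{[s,t']\in\partition_n,\,t'\le t}(W^i(t')-W^i(s))(W^j(t')-W^j(s)),\\
C_n^{ij}(t)&:=\sum_{[s,t']\in\partition_n,\,t'\le t}(W^i(\tau(t'))-W^i(\tau(s)))(W^j(\tau(t'))-W^j(\tau(s))),\\
B_n^{ij}(t)&:=\sum_{[s,t']\in\partition_n,\,t'\le t}(W^i(t')-W^i(s))(W^j(\tau(t'))-W^j(\tau(s))),
\end{align*}
for which I need $A_n^{ij}(t)\to\delta_{ij}t$, $C_n^{ij}(t)\to\delta_{ij}(\tau(t)-\tau(0))$, and $B_n^{ij}(t)\to 0$ in $L^2$. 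For the diagonal blocks the means are immediate by independence of disjoint Brownian increments, and Wick's formula yields variance bounds of the form $\sum(t'-s)^2\le|\partition_n|T$ for $A_n$, and $\sum(\tau(t')-\tau(s))^2\le\omega_\tau(|\partition_n|)\,(\tau(T)-\tau(0))$ for $C_n$, both tending to zero by uniform continuity of $\tau$.

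The cross block $B_n^{ij}$ is the interesting part. For $i\ne j$ the independence of distinct coordinates of $W$ together with Wick's formula gives $\E[B_n^{ij}(t)^2]\le|\partition_n|(\tau(T)-\tau(0))$ directly. In the case $i=j$ the mean equals the total overlap $\sum|[s,t']\cap[\tau(s),\tau(t')]|$. Since $\tau-\id$ is continuous and strictly positive on $[0,T)$, for every $\eta>0$ there exists $\delta(\eta)>0$ with $\tau(s)-s\ge\delta(\eta)$ on $[0,T-\eta]$; once $|\partition_n|<\delta(\eta)$ the intersection is empty outside $[T-\eta-|\partition_n|,T]$, so the mean is bounded by $\eta+|\partition_n|$, and choosing $\eta=\eta_n\to 0$ slowly enough makes it vanish. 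For the variance I would again invoke Wick's formula; the decisive cross term is dominated by $\sum_{I,J\in\partition_n}|I\cap\tau(J)|\cdot|\tau(I)\cap J|\le\omega_\tau(|\partition_n|)\cdot T$ via $\sum_J|I\cap\tau(J)|\le|I|$ and $|\tau(I)\cap J|\le|\tau(I)|\le\omega_\tau(|\partition_n|)$, so the variance vanishes too.

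The main obstacle I expect is precisely the analysis of $B_n^{ii}$ near the endpoint $t=T$: there the strict separation $\tau>\id$ degenerates and the increments $[s,t']$ and $[\tau(s),\tau(t')]$ can genuinely overlap, so the asymptotic vanishing of the mean rests on quantitatively capturing the size of this ``bad region'' through the modulus of $\tau-\id$ at $T$.
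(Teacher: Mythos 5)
Your proposal is correct and follows the same overall strategy as the paper: reduce to entrywise $L^2$-convergence of the distribution functions at a countable dense set of times, extract a subsequence, control the total variation of the signed cross-measures by Cauchy--Schwarz against the two diagonal quadratic variations, and conclude with Theorem \ref{theo:weakconvergence}. The diagonal blocks are treated essentially identically (the paper cites L\'evy's a.s.\ result for nested partitions where you redo the second-moment computation, an immaterial difference). The one place where you genuinely diverge is the cross block $B_n^{ii}$, and there your version is actually the more careful one. The paper's proof asserts that $\expE\big[(W(v)-W(u))(W(\tau(v))-W(\tau(u)))\big]=0$ for every partition interval $[u,v]$; this is false whenever $\tau(u)\le v$, in which case the covariance equals the overlap $v-\tau(u)>0$, and such overlaps do occur for intervals near $T$ since $\tau(u)-u\to 0$ as $u\to T$. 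Your observation that the mean of $B_n^{ii}$ is exactly the total overlap $\sum_I|I\cap\tau(I)|$, together with the bound $\le\eta+|\partition_n|$ obtained from the strict separation $\tau-\id\ge\delta(\eta)$ on $[0,T-\eta]$, is precisely what is needed to close this gap; the same bound also controls the product of single-interval covariances appearing in the off-diagonal part of the second moment. Your Wick-formula variance estimate, with $\sum_{I,J}|I\cap\tau(J)|\,|\tau(I)\cap J|\le\omega_\tau(|\partition_n|)T$, replaces the paper's conditional-expectation case analysis and gives the same conclusion more transparently. Two trivial remarks: atomlessness of the limit needs only continuity of $\tau$, not $\tau\in C^1$; and the limit distribution function should be written with $\tau(t)-\tau(0)$ in the lower block, as in the statement.
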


\begin{proof}	
    We first show the assertion for $d=1$. 
	For a sequence $\partition=(\partition_n)$ with $\vert\partition\vert\to 0$ (to be constructed) 
	define the random measures
	\begin{align*}
		\mu_n^{W} = \sum_{[u,v]\in\partition_n} \delta(\cdot-u)(W(v)-W(u))^2,\quad \mu_n^{\Wtau} = \sum_{[u,v]\in\partition_n} \delta(\cdot-u)(W(\tau(v))- W(\tau(u)))^2.
	\end{align*}
	as well as a random signed measure
	\begin{align}\label{lem:2varW:eq:1}
		\mu_n^{W, \Wtau} = \sum_{[u,v]\in\partition_n} \delta(\cdot-u)(W(v)-W(u))(W(\tau(v))- W(\tau(u))).
	\end{align}
	The assertion follows if these one-dimensional measures converge accordingly. 
	
	\underline{$\mu_n^{W}\rightharpoonup \lambda_{\mid [0,T]}$ \as}: It is well known from \parencite[Theorem 5]{levyMouvementBrownienPlan1940} that for every increasing sequence of partitions $(\partition_n)$ with $\vert \partition_n\vert\to 0$ as $n\to\infty$, it holds $\prob$-\as\ that $\mu_n^W([0,T]) \to T$.  Then clearly also \[\prob(\lim_{n\to\infty} \mu_n^W([0,t]) \to t, \forall t\in[0,T]\cap \Q)=1.\]
	Hence it follows from Theorem \ref{theo:weakconvergence} that $\prob$-\as, $\mu_n^W\rightharpoonup\lambda_{\mid[0,T]}$.
	By the same argument $\mu_n^{\Wtau}$ converges $\prob$-\as\ weakly to the measure induced by $\tau -\tau(0)$.
	
	\underline{$\mu_n^{W, \Wtau}\rightharpoonup 0$ \as}: Let $t\in[0,T]$. It holds that 
	\begin{align}\label{lem:2varW:eq:L2mun}
    \begin{split}
		&\Vert \mu_n^{W, \Wtau}([0,t])\Vert_{L^2(\Omega)}^2
		= \sum_{\substack{[u,v]\in\partition_n,\\ u\le t}} \expE (W(v)-W(u))^2(W(\tau(v))-W(\tau(u)))^2\\
		&\quad + 2 \sum_{\substack{[u,v], [s,r]\in\partition_n,\\ v\le s\le t}} \expE(W(v)-W(u))(W(\tau(v))- W(\tau(u)))(W(r)-W(s))(W(\tau(r))- W(\tau(s))).
        \end{split}
	\end{align}
	To utilize the fact that the Brownian motion $W$ has independent increments and its moment properties we distinguish several cases. If $\tau(u)\le v$ (meaning that $[u,v]$ and $[\tau(u), \tau(v)]$ intersect) we add and substract $W(v)$ to get non-overlapping subintervals $[u,v]$, $[\tau(u), v]$ and $[v,\tau(v)]$. Then $W(\tau(v))- W(v)$ is independent of $\F_v$ (or simply zero if $t=T$, $v=T$). Spelled out it holds that
	\begin{align*}
		&\expE(W(v)-W(u))^2(W(\tau(v))-W(\tau(u)))^2\\
		&= \expE(W(v)-W(u))^2\expE(W(\tau(v))-W(\tau(u)))^2\one_{v<\tau(u)}+ \Big[\expE(W(v)-W(u))^2\expE(W(\tau(v))- W(v))^2\\
		&+ 2\expE(W(v)-W(u))^2(W(v) - W(\tau(u)))\expE(W(\tau(v))- W(v)) + \expE(W(v)-W(u))^2(W(v)-W(\tau(u)))^2 \Big]\one_{\tau(u)\le v}\\
		&= (v-u)(\tau(v)-\tau(u))\one_{v<\tau(u)} + \Big[(v-u)(\tau(v)-v) + \expE(W(v)-W(u))^2(W(v)-W(\tau(u)))^2\Big]\one_{\tau(u)\le v}.
	\end{align*}
	Continuing the case $\tau(u)\le v$, adding and substracting $W(\tau(u))$ in the first factor yields that
	\begin{align*}
		&\expE(W(v)-W(u))^2(W(v)-W(\tau(u)))^2\\
		&= \expE(W(v)- W(\tau(u)))^4 + 2\expE(W(\tau(u))- W(u))\expE(W(v)- W(\tau(u)))^3\\
		&\quad + \expE(W(\tau(u))- W(u))^2\expE(W(v)- W(\tau(u)))^2\\
		&= 3(v-\tau(u))^2 + (\tau(u)- u)(v-\tau(u)).
	\end{align*}
	Thus 
	\begin{align*}
		&\sum_{\substack{[u,v]\in\partition_n,\\ u\le t}} \expE (W(v)-W(u))^2(W(\tau(v))-W(\tau(u)))^2\\
		&= \sum_{\substack{[u,v]\in\partition_n,\\ u\le t}}  (v-u)(\tau(v)-\tau(u))\one_{v<\tau(u)} + \Big[(v-u)(\tau(v)-v) + 3(v-\tau(u))^2 + (\tau(u)- u)(v-\tau(u))\Big]\one_{\tau(u)\le v}\\
		&\lesssim \sum_{\substack{[u,v]\in\partition_n}} (v-u)(\tau(v)-\tau(u)) \le \vert\partition_n\vert T\to 0
	\end{align*}
	as $n\to\infty$. 
    
    The other summand in \eqref{lem:2varW:eq:L2mun} decomposes similar manner: Let $[u,v], [s,r]\in\partition_n$ with $v\le s\le t$, then
	\begin{align*}
		&\expE(W(v)-W(u))(W(\tau(v))- W(\tau(u)))(W(r)-W(s))(W(\tau(r))- W(\tau(s)))\\
		& = \expE(W(v)-W(u))(W(\tau(v))- W(\tau(u)))\Big[ (W(r)-W(s))(W(\tau(r))- W(\tau(s)))\one_{r<\tau(s)}\\
		&+ \Big[(W(r)- W(s))(W(\tau(r))- W(r)) + (W(r)- W(\tau(s)))^2 + (W(\tau(s))- W(s))(W(r)- W(\tau(s)))\Big]\one_{\tau(s)\le r}\Big]\\
		&=\expE(W(v)-W(u))(W(\tau(v))- W(\tau(u)))(r-\tau(s)),
	\end{align*}
	where we used in the last line that in the case $r<\tau(s)$, $W(\tau(r))- W(\tau(s))$ is independent of $\F_{\tau(s)}$ and by monotonicity of $\tau$, $\F_v\subset \F_{\tau(v)}\subset\F_{\tau(s)}$ and similarly for $r\ge\tau(s)$. Moreover the same arguments show that 
	\begin{align*}
		\expE(W(v)-W(u))(W(\tau(v))- W(\tau(u))) =0.
	\end{align*}
	Hence $\Vert \mu_n^{W, \Wtau}([0,t])\Vert_{L^2(\Omega)}\to 0$ as $n\to\infty$. Take now any increasing sequence of partitions $\partition$ with $\vert\partition\vert\to 0$. Then we may extract for each $t\in[0,T]$ a $\prob$-\as\ convergent subsequence. In fact restricting to $\mathbb Q\cap [0,T]$,
	we find by a diagonal argument a (not relabeled) subsequence and a $\prob$-negligible set $ N$, such that for all $\omega\in\Omega\setminus N$, every $t\in\mathbb Q\cap [0,T]$, $\mu_n^{W(\omega), \Wtau(\omega)}([0,t])\to 0$ as $n\to\infty$. 
	To conclude that this implies $\mu_n^{W(\omega), \Wtau(\omega)}\rightharpoonup 0$ we apply Theorem \ref{theo:weakconvergence}: First note that for each $\omega\in\Omega$, 
	\[F_n^{W(\omega), \Wtau(\omega)} := \sum_{[u,v]\in\partition_n} (W(\omega,v)-W(\omega,u))(W(\omega,\tau(v))- W(\omega,\tau(u)))\one_{[0,v)}\]
    	is clearly right-continuous and has bounded variation (since it takes only finitely many values). Therefore $\mu_n^{W(\omega), \Wtau(\omega)}$ is a finite $\sigma$-additive measure on $\borel([0,T])$ and \parencite[Theorem 4.17]{bongiornoRieszRepresentationTheorem2001} shows that it has total variation $\vert \mu^{W(\omega), \Wtau(\omega)}_n\vert ([0,T])= \mathrm{Var}(F_n^{W(\omega), \Wtau(\omega)}, [0,T])$. It remains to show that the sequence is bounded in total variation norm. Indeed by the Cauchy-Schwarz inequality for sums it holds that
	\begin{align*}
		\mathrm{Var}(F^{W(\omega), \Wtau(\omega)}_n, [0,T]) &= \sup_{\partition\text{ of }[0,T]} \sum_{[s,t]\in\partition} \vert F_n^{W(\omega), W(\omega)\circ\tau}(t) - F_n^{W(\omega), W(\omega)\circ\tau}(s)\vert\\
		&= \sum_{[u,v]\in\partition_n} \vert W(\omega,v)-W(\omega,u)\vert \cdot \vert W(\omega,\tau(v))- W(\omega,\tau(u))\vert\\
		&\le \mu_n^{W(\omega)}([0,T])^{1/2}\mu_n^{\Wtau(\omega)}([0,T])^{1/2},
	\end{align*}
	and the last term is bounded, since $(\mu_n^W)$, $(\mu_n^{\Wtau})$ are $\prob$-\as\ weakly convergent (change $ N$ if necessary).
    
    Finally for $d>1$ let $i,j=1,\dots, d$ with  $i\not= j$. Define $\mu_n^{W^i,W^j}$ by adjusting \eqref{lem:2varW:eq:1} in the obvious way. Since the components $W^i$ and $W^j$ are independent it follows in the same manner as before that for every $t\in[0,T]$ that \[\Vert\mu_n^{W^i,W^j}([0,t])\Vert_{L^2(\Omega)}, \Vert\mu_n^{W^i, (W^j)^{\circ\tau}}([0,t])\Vert_{L^2(\Omega)}\to 0.\]
    as $n\to\infty$. 
\end{proof}

\begin{cor}\label{cor:2varXW} Let $\alpha\in(1/2,1/3)$. Assume the setting of Lemma \ref{lem:2varW} and suppose that $Y$ solves \eqref{cSDE} and additionally that the time change $\tau$ is continuously differentiable. Then there exists a sequence of partitions $\partition$ with $\vert\partition\vert\to 0$ and a negligible set $ N\in\F$, such that for all $\omega\in\Omega\setminus N$, $(W(\omega), \Wtau(\omega))$ has finite quadratic variation along $\partition$ and $\vert W(\omega)\vert_\alpha <\infty$. In particular for every $\omega\in\Omega\setminus N$ it then also follows that $(Y(\omega), \Wtau(\omega))$ has finite quadratic variation with
	\begin{align*}
		[(Y(\omega), \Wtau(\omega))] = \mathrm{diag}\lb\int_0^\cdot f(Y(\omega))^{\otimes 2}\dd\lambda, (\tau-\tau(0))\one_d\rb.
	\end{align*}
\end{cor}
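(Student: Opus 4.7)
The strategy is to reduce the joint quadratic variation of $(Y,\Wtau)$ to the weak limits for the $W$-based measures $\mu_n^W$, $\mu_n^{\Wtau}$, $\mu_n^{W,\Wtau}$ from Lemma~\ref{lem:2varW} via the controlled rough path expansion of $Y$.

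First, starting from any increasing sequence of partitions with mesh tending to zero, Lemma~\ref{lem:2varW} extracts a subsequence $\partition$ and a $\prob$-null set $N_1$ off which $\mu_n^W$, $\mu_n^{\Wtau}$ and all scalar cross measures $\mu_n^{W^i,W^{j,\circ\tau}}$ converge weakly as stated. Combine with the Kolmogorov null set $N_2$ from Example~\ref{exmp:lift} i) to ensure $\vert W(\omega)\vert_\alpha<\infty$, and set $N:=N_1\cup N_2$. Since $\tau\in C^1$, one also has $\vert\Wtau(\omega)\vert_\alpha\leq \Vert\tau'\Vert_\infty^\alpha\vert W(\omega)\vert_\alpha<\infty$ on $N^c$; this settles the first assertion.

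Now fix $\omega\notin N$. By Theorem~\ref{theo:sol2cSDE} ii) the pair $(Y,f(Y))$ is an $\alpha$-H\"older $W$-controlled rough path with Gubinelli remainder of finite $2\alpha$-H\"older norm, so
\[
Y(v)-Y(u)=f(Y(u))(W(v)-W(u))+R(u,v),\qquad \vert R(u,v)\vert\leq C(\omega)(v-u)^{2\alpha},
\]
where $R$ absorbs both the drift (via $v-u\leq T^{1-2\alpha}(v-u)^{2\alpha}$) and the $\X$-correction of the rough integral (Theorem~\ref{theo:RI}). Inserting this into the three blocks of the symmetric tensor $(Y(v)-Y(u),\Wtau(v)-\Wtau(u))^{\otimes 2}$ produces principal parts $f(Y(u))(W(v)-W(u))^{\otimes 2}f(Y(u))^T$, $f(Y(u))(W(v)-W(u))(\Wtau(v)-\Wtau(u))^T$ and $(\Wtau(v)-\Wtau(u))^{\otimes 2}$, plus errors of order $(v-u)^{3\alpha}$ or $(v-u)^{4\alpha}$ (Cauchy-Schwarz combined with $\vert\Wtau\vert_\alpha\lesssim\vert W\vert_\alpha$). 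Summing the error contributions over $[u,v]\in\partition_n$ bounds the total variation of the error measures by $C\vert\partition_n\vert^{3\alpha-1}T$, which vanishes because $3\alpha>1$ for $\alpha\in(1/3,1/2)$.

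The weak limits of the principal parts are identified by testing against $H\in C([0,T],\Sym{n+d})$ with blocks $H_{11}, H_{12}, H_{22}$: each pairing reduces to $\la\widetilde H,\mu_n^W\ra$ or $\la\widetilde H,\mu_n^{W,\Wtau}\ra$ for a continuous transform $\widetilde H$ built from $f(Y)$ and the blocks of $H$ (e.g.\ $\widetilde H_{11}(u):=f(Y(u))^TH_{11}(u)f(Y(u))$). Lemma~\ref{lem:2varW} then delivers the claimed limits, yielding the diagonal matrix-valued measure with top-left block $\int_0^\cdot f(Y)^{\otimes 2}\dd\lambda$ and bottom-right block $(\tau-\tau(0))\one_d$. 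The main technical point is this exact reduction to the $W$-based weak convergence: it is clean precisely because the weights $f(Y(u))$ are evaluated at the left endpoints $u$, matching the atoms $\delta(\cdot-u)$ of the discrete measures, so no modulus-of-continuity error enters.
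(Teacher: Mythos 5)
Your proposal is correct and follows essentially the same route as the paper's proof: decompose the increments of $Y$ via the controlled-rough-path expansion $Y(v)-Y(u)=f(Y(u))(W(v)-W(u))+R^{Y,W}(u,v)$, kill the remainder contributions using $3\alpha>1$, and identify the principal part by testing against a continuous transform $\tilde h$ of the test function so that Lemma \ref{lem:2varW} applies. (The range $\alpha\in(1/2,1/3)$ in the statement is a typo for $(1/3,1/2)$, which you implicitly and correctly use.)
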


\begin{proof}
	Such a sequence $\partition =(\partition_n)$ and negligible set $N$ exists by the above lemma and the Kolmogorov criterion, see for instance \parencite[Theorem 3.1]{RoughBook}. We consider
	\begin{align*}
		\mu_n^{(Y, \Wtau)} := \sum_{[s,t]\in\partition} \delta(\cdot - s)\begin{pmatrix} Y(t)-Y(s)\\ \Wtau(t) - \Wtau(s)\end{pmatrix}^{\otimes 2}.
	\end{align*}
	Further let $h = (h^{i,j})_{i,j=1,\dots,d+n}\in C([0,T], \Sym{n+d})$. Then
	\begin{align}
		&\int_0^T \la h, \dd\mu_n^{(Y, \Wtau)}\ra 
		= \sum_{[s,t]\in\partition_n} \la h(s), \begin{pmatrix} Y(t)-Y(s)\\ \Wtau(t) - \Wtau(s) \end{pmatrix}^{\otimes 2}\ra\notag\\
		&= \sum_{[s,t]\in\partition_n} \la h(s), \begin{pmatrix} R^{Y,W}(s,t)\\0\end{pmatrix}^{\otimes 2} + 2\sym{\begin{pmatrix} R^{Y,W}(s,t)\\ 0\end{pmatrix} \otimes \begin{pmatrix}f(Y(s))(W(t)- W(s)\\ \Wtau(t)-\Wtau(s)\end{pmatrix}} \ra\label{cor:2varXW:eq:1}\\
		&\quad + \sum_{[s,t]\in\partition_n} \la h(s), \begin{pmatrix} f(Y(s))(W(t)-W(s))\\ \Wtau(t) - \Wtau(s)\end{pmatrix}^{\otimes 2} \ra.\label{cor:2varXW:eq:2}
	\end{align}
	Since $Y$ is a solution to~\eqref{cSDE}, $(Y, f(Y))$ is $W$-controlled by Theorem~\ref{theo:sol2cSDE}) and thus
	\begin{align*}
		&\left\vert \la h(s), \begin{pmatrix} R^{Y,W}(s,t)\\0\end{pmatrix}^{\otimes 2} + 2\sym{\begin{pmatrix} R^{Y,W}(s,t)\\ 0\end{pmatrix} \otimes \begin{pmatrix}f(Y(s))(W(t)- W(s)\\ \Wtau(t)-\Wtau(s)\end{pmatrix}} \ra \right\vert\\
		&\lesssim_{\vert f\vert_\infty, \vert f\vert_\infty} \vert R^{Y,W}(s,t)\vert^2 + \vert R^{Y,W}(s,t)\vert(\vert W(t)-W(s)\vert+\vert \Wtau(t)-\Wtau(s)\vert)\\
        &\lesssim_{\vert \W\vert_{2\alpha}, \vert f\vert_{C^2_b}, \vert b\vert_\infty, \vert W\vert_\alpha}  \vert t-s\vert^{4\alpha} + \vert t-s\vert^{3\alpha},
	\end{align*}
    since $\vert\Wtau\vert_\alpha\lesssim_{T,\vert\tau'\vert_\infty}\vert W\vert_\alpha$.
	Now $3\alpha >1$ implies that \eqref{cor:2varXW:eq:1} converges to zero as $n\to\infty$. The other sum \eqref{cor:2varXW:eq:2} gives
	\begin{align*}
		&\sum_{[s,t]\in\partition_n} \la h(s), \begin{pmatrix} f(Y(s))(W(t)-W(s))\\ \Wtau(t) - \Wtau(s)\end{pmatrix}^{\otimes 2} \ra=\int_0^T \la \tilde h(s), \dd\mu_n^{W, \Wtau}(s)\ra,
	\end{align*}
    for $\tilde h\colon [0,T]\to \Sym{2d}$ defined by 
    \begin{align*}
        \tilde h(s) =\begin{pmatrix} f(Y(s))^T&0\\ 0 &\diag(\one_d)\end{pmatrix}h(s)\begin{pmatrix} f(Y(s))&0\\ 0 &\diag(\one_d)\end{pmatrix}.
    \end{align*}
	Since $s\mapsto \tilde h(s)$ is continuous, Lemma \ref{lem:2varW} implies that 
    \begin{align*}
        \int_0^T \la \tilde h, \dd\mu_n^{W, \Wtau}\ra\to \int_0^T \la \tilde h, \dd\mu^{W, \Wtau}\ra 
        = \int_0^T \la \tilde h,  \mathrm{diag}(\one_d, \tau'\one_d)\ra \dd\lambda 
        = \int_0^T \la h, \mathrm{diag}(f(Y(\omega))^{\otimes 2}, \tau'\one_d)\ra \dd\lambda.
    \end{align*}
    
\end{proof}

\subsection{Functional Calculus}\label{sec:func}
We briefly recall definitions and results from \parencite{bielertRoughFunctionalIto2024}.
\subsubsection{Causal Derivatives}
Derivatives for pathdependent causal functionals are introduced in \parencite{dupireFunctionalItoCalculus2009}. The following formulations are due to \parencite{oberhauserExtensionFunctionalIto2012}. A functional $F\colon\ [0,T]\times D([0,T], \R^d) \to \R$ is called \textit{causal} if for any $X \in D([0,T], \R^d)$ and $t \in [0,T]$ we have
$F(t, X) = F(t, X_t)$. 

\begin{defi}[Causal Space Derivative]\label{defi:D}
	If for all $(t,X)\in [0,T]\times D([0,T], \R^d)$ the map
	\begin{align*}
		\R^d\ni h\mapsto F(t, X_t + h \one_{[t,T]})
	\end{align*}
	is continuously differentiable at $h = 0$ we say that $F$ has a causal space derivative. We denote it by $\nabla F(t,X) = (\partial_1 F(t,X), \dots, \partial_n F(t,X))$. Similarly, we define for $n\in\N$ the $n$th causal space derivative and denote it by $\nabla^n F$.
\end{defi}
\begin{defi}[Causal Time Derivative]\label{defi:nabla}
	If for all $(t,X)\in [0,T]\times D([0,T], \R^d)$ the map 
	\begin{align*}
		[0,\infty)\ni h \mapsto F(t + h, X_t)
	\end{align*}
	is continuous and right-differentiable at $h=0$ we denote this derivative by $D F(t, X)$. If additionally $t\mapsto D F(t, X)$ is Riemann integrable, then we say that $F$ has a causal time derivative.
\end{defi}
Note that causal space and time derivatives are again causal functionals.
For $n\in\N$ we write $F\in\mathbb{C}^{1,n}_b$, if $F$ has a causal time derivative and $n$ causal space derivatives such that $F$, $DF$ and for $k=1, \dots, n$, $\nabla^k F$ are continuous in $[0,T]\times D([0,T], \R^d)$ and uniformly bounded in $(t,X)$. Further we define a norm bei $\vert F\vert_{\mathbb{C}^{1,n}_b} := \vert DF\vert_\infty+\sum_{k=0}^n \vert\nabla^kF\vert_\infty$.

\begin{exmp}\label{exmp:funcdiff} Let $g\in C(\R^d)$, $f\in C^1(\R^d)$. By \parencite[Example 5.2.5, 5.2.6]{BallyVlad2016Sibp},
	\begin{enumerate}[i)]
		\item $F(t,X) = f(X(t))$, then $\nabla F(t, X) = \nabla f(X(t))$ and $DF(t, X)=0$.
		\item $F(t,X)=\int_0^t g(X(s))\dd s$, then $\nabla F(t,X)=0$ and $DF(t,X)=g(X(t))$.
	\end{enumerate}
The discussion in the beginning of Section \ref{sec:hjb} suggest that the value $v(t,y)$ could depend on $f(W(\tau(t))- W(t))$ or $\int_t^{\tau(t)} f(W(\tau(t))- W(u))\tau'\dd u$. Let $\tau$ be a time change such that $\tau, \tau^{-1}\in C^1$. 
\begin{enumerate}[i)]\setcounter{enumi}{2}
	\item We look for a family of causal maps $F$ that realize $F(t,\Wtau,\Wtaunull)=f(W(\tau(t))-W(t))$: Since
	\begin{align*}
		W(\tau(t))-W(t) 
		=\Wtau(t)-\Wtau(\tau^{-1}(t\vee\tau(0)))+\Wtaunull(\tau(0))-\Wtaunull(t\wedge \tau(0)).
	\end{align*}
	we set for $\w\in C([0,\tau(0)],\R^d)$,
	\begin{align*}
		F(t,X,\w)=X(t)-X(\tau^{-1}(t\vee\tau(0)))+\w(\tau(0))-\w(t\wedge \tau(0)).
	\end{align*}
	Then $F(\cdot,\w)$ is a causal map with $\nabla F(t,\Wtau, \Wtaunull)= \nabla f(W(\tau(t))-W(t))$. But $F(\cdot, \w)$ has in general \textbf{no} causal time derivative: Let $h>0$. If $\tau^{-1}(t+h)\le t$, then
	\begin{align*}
		&X_t(t+h)-X_t(\tau^{-1}((t+h)\vee\tau(0))) +\w(\tau(0))-\w((t+h)\wedge \tau(0))\\
		&= X(t)-X(\tau^{-1}((t+h)\vee\tau(0))) +\w(\tau(0))-\w(t+h)\wedge \tau(0)).
	\end{align*}
	Hence for $DF(t,X, \w)$ to exist, $X$ would need to be right-differentiable in $\tau^{-1}(t)$. In particular this is false for typical Brownian sample paths $X=W(\omega)$.
	\item For the functional $F(t, X)= \int_{\tau^{-1}(t)}^t f(X(t)-X(u))\tau'(u) \dd u$, Leibniz' integral rule gives
	\begin{align*}
		\nabla F(t,X) &= \lim_{\vert h\vert\to 0}\ \frac1h \lb \int_{\tau^{-1}(t)}^t f(X(t) +h-X(u))\tau'(u)\dd u - \int_{\tau^{-1}(t)}^t f(X(t)-X(u))\tau'(u)\dd u \rb\\
		&= \int_{\tau^{-1}(t)}^t \nabla f(X(t)-X(u))\tau'(u)\dd u
	\end{align*}
	and 
	\begin{align*}
		 DF(t,X) &= \lim_{h\searrow 0}\ \frac1h \lb \int_{\tau^{-1}(t+h)}^{t+h}f(X_t(t+h)-X_t(u))\tau'(u)\dd u - \int_{\tau^{-1}(t)}^t f(X_t(t)-X_t(u))\tau'(u)\dd u \rb\\
		 &= f(0)\tau'(t) - \lim_{h\searrow 0} \frac1h \int_{\tau^{-1}(t)}^{\tau^{-1}(t+h)} f(X(t)-X(u))\tau'(u)\dd u\\
		 &= f(0)\tau'(t) - f(X(t)-X(\tau^{-1}(t)))\tau'(\tau^{-1}(t))(\tau^{-1})'(t).
	\end{align*}
\end{enumerate}
\end{exmp}

\begin{rem}
	The continuity and boundedness assumptions on $F$ and its causal derivatives are very strong. For instance, for a continuous semimartingale $X$ there is no causal functional $F\in\c12$, such that $F(\id,X)$ is indistinguishable from the stochastic quadratic variation or from the It\^o integral $\int_0^\cdot X\dd X$ (since these objects are not continuous in the topology of uniform convergence). Of course it is desirable (but outside the scope of this work) to allow for a richer class of causal functionals as considered in \parencite[Definition 10, Theorem 33, Corollary 39]{oberhauserExtensionFunctionalIto2012}.  
    Other working directions are to change the state space of $F$: E.g.\ add an explicit dependence on $\la X\ra$, see \parencite[Part II, Chapter 6.3]{BallyVlad2016Sibp}, or let $F$ depend on a full (geometric) rough path $\Xb\colon[0,T]\to G^{N}(\R^d)$, see \parencite{cuchieroFunctionalItoformulaTaylor2025} for an appropriate nonsymmetric adjustment of the concepts of causal derivatives. We point out that in the case of one-dimensional Brownian motion the current set up is enough. Indeed by Example \ref{exmp:lift} its It\^o lift is simply $(W, \frac12\delta W - \id )$. Hence a \textit{reduced rough path}, that is its second level is purely symmetric and consists solely of increments $\delta W$. Whereas a multidimensional Brownian motion leads to a second level $\W$ with a non zero antisymmetric part. That part represents complex pathdependencies that are ``not seen'' by the the symmetric causal space derivative. 
\end{rem}

In the Definition \ref{defi:nabla} of the causal space derivative, $F$ needs to be defined for all c\`adl\`ag paths $X$ (otherwise $F$ is not defined for the perturbed path $X_t +h\one_{[t,T]}$). By contrast, in our optimization problem only continuous (even Brownian) paths matter. Hence the question arises in what sense the Brownian paths determine a solution to \eqref{hjb}. It is known from \parencite[Theorem 5.4.1, Theorem 5.4.2]{BallyVlad2016Sibp}, that whenever two causal functionals $F^1, F^2\in\c12$ coincide on $[0,T]\times C([0,T],\R^d)$, also their causal space derivatives $\nabla^k F^1=\nabla^k F^2$ coincide there. In fact, even a restriction to Brownian paths as in \eqref{hjb}, still characterizes the `causal heat operator' up to indistinguishability, if combined with regularity assumptions. This is the point of following result which is essentially due to \parencite{oberhauserExtensionFunctionalIto2012}:

\begin{cor}\label{cor:unique}
Assume that $\tau$ is a continuously differentiable time change and $u\colon [0,T]\times \R^n\times C([0,\tau(0)],\R^d)\times D([0,T],\R^d)\to \R$ such that
\begin{enumerate}[i)]
    \item for all $y\in\R^n$, $\w\in C([0,\tau(0)],\R^d)$, $u(\cdot,y,\w,\cdot)\in\c12$,
    \item $(y,\w)\mapsto u(\cdot,y,\w,\cdot)\in\c12$ is continuous,
    \item for all $t\in[0,T]$, $\w\in C([0,\tau(0)],\R^d)$, $\z\in D([0,T],\R^d)$, $y\mapsto u(t,y,\w,\z)$ is two times continuously differentiable.
\end{enumerate}
Let $\tilde u$ be another map satisfying $i)-iii)$ and such that for $\prob^{\Wtaunull}$-\as\ every $\w\in C([0,\tau(0)],\R^d)$ and all $y\in\R^n$, $u(\cdot,y,\w,\cdot)$ is indistinguishable from $\tilde u(\cdot,y,\w,\cdot)$, i.e.\ for $\prob^{\Wtau}$-\as\ $\z\in D([0,T],\R^d)$ it holds $u(\id,y,\w,z)=u(\id,y,\w,z)$. Then $u$ solves \eqref{hjb} if and only if $\tilde u$ does.
\end{cor}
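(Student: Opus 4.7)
The plan is to reduce the corollary to the rigidity principle for $\c12$-functionals on continuous paths referenced in the discussion preceding the statement (namely \parencite[Theorems 5.4.1, 5.4.2]{BallyVlad2016Sibp} and \parencite{oberhauserExtensionFunctionalIto2012}). My strategy is first to upgrade the $\prob^{\Wtau}$-almost sure equality of $u$ and $\tilde u$ to pointwise equality on sufficiently many continuous paths, then to transfer this equality to all derivatives appearing in \eqref{hjb} via the cited rigidity, and finally to read the HJB equation off at a typical Brownian path.

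First I would observe that $\Wtaunull$ depends only on $W|_{[0,\tau(0)]}$ while $\Wtau$ depends only on $W|_{[\tau(0),T]}$; by independence of the Brownian increments they are independent, so their joint law equals $\prob^{\Wtaunull}\otimes\prob^{\Wtau}$. Combined with Fubini this shows that the $\prob^{\Wtaunull}$-null set of ``bad'' parameter paths $\w$ and the $\prob^{\Wtau}$-null sets of bad evaluation paths $\z$ for each good $\w$ combine into a $\prob^W$-null set of Brownian paths. In particular $u(t,y,\wtaunull,\wtau)=\tilde u(t,y,\wtaunull,\wtau)$ for $\prob^W$-a.e.\ $\w\in C([0,T],\R^d)$, all $t$ and all $y$.

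Next, for any $\w\in C([0,\tau(0)],\R^d)$ outside the $\prob^{\Wtaunull}$-null set $N_1$ and any $y\in\R^n$, the causal functionals $u(\cdot,y,\w,\cdot)$ and $\tilde u(\cdot,y,\w,\cdot)$ are both in $\c12$ by assumption (i) and coincide on a $\prob^{\Wtau}$-conull subset of $C([0,T],\R^d)$. By continuity on $\c12$ this equality extends to the topological support of $\prob^{\Wtau}$, which in particular contains every $\wtau$ arising from a Brownian path $\w$. The cited Oberhauser/Bally-Vlad rigidity -- coincidence of $\c12$-functionals on continuous paths forces coincidence of the causal time derivative and of all causal space derivatives -- then yields equality of $Du$, $\nabla_\z u$ and $\nabla_\z^2 u$ at those continuous paths. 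Assumptions (ii) and (iii) propagate the equality to $\nabla_y u$ and $\nabla_y^2 u$ at the same evaluation points.

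Finally, reading off \eqref{hjb} at a Brownian path: each of the terms $Du$, $\nabla_y u$, $\nabla_y^2 u$, $\nabla_\z^2 u$ is evaluated at $(t,y,\wtaunull,\wtau)$, and by the previous step matches the corresponding $\tilde u$-derivative at the same point for $\prob^W$-a.e.\ $\w$; the boundary condition is handled the same way at $t=T$. Hence all three equations of \eqref{hjb} hold $\prob^W$-almost surely for $u$ if and only if they do for $\tilde u$. The main technical obstacle I foresee is the passage from $\prob^{\Wtau}$-a.s.\ equality to equality of the causal derivatives at the Brownian paths $\wtau$: the support of $\prob^{\Wtau}$ can be a strict closed subset of $C([0,T],\R^d)$, so the Oberhauser/Bally-Vlad rigidity -- phrased globally on $C([0,T],\R^d)$ -- must be localized to this support, which is precisely the set of paths at which \eqref{hjb} is required to hold.
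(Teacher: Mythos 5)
Your reduction to the rigidity of causal functionals is the right idea, but the central step is asserted too strongly, and the obstacle you flag at the end is not peripheral --- it sits exactly where the argument must be justified. From $\prob^{\Wtau}$-a.s.\ coincidence one cannot conclude that $Du$, $\nabla_\z u$ and $\nabla_\z^2 u$ agree \emph{individually}: the indistinguishability version of the uniqueness results (this is \parencite[Proposition 29]{oberhauserExtensionFunctionalIto2012}, and the reason the paper points to Remark~8 there as a warning) determines only the first causal space derivative and the \emph{combination} $Du+\tfrac12\tr(\nabla_\z^2 u)\,\tau'$ up to indistinguishability; $Du$ and $\nabla_\z^2 u$ separately are genuinely non-unique under a.s.\ coincidence along the process. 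Your attempted repair --- extend the a.s.\ equality by continuity to the topological support of $\prob^{\Wtau}$ and then invoke the Bally--Caramellino--Cont theorem, which does control all space derivatives --- needs that theorem with coincidence only on paths started at the origin rather than on all of $C([0,T],\R^d)$; you correctly identify this localization as unproven and leave it open, so the proof is incomplete as written.

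The paper's proof sidesteps the issue by never claiming individual equality of the causal derivatives. It first obtains, for $\w$ outside a $\prob^{\Wtaunull}$-null set and $\z$ outside a $\prob^{\Wtau}$-null set chosen independently of $\w$ (using assumption ii); your independence-plus-Fubini step serves the same purpose), equality of $u$ and $\tilde u$ and hence of $\nabla_y^k u=\nabla_y^k\tilde u$ for $k=1,2$ by assumption iii). It then applies \parencite[Proposition 29]{oberhauserExtensionFunctionalIto2012} to conclude only that $Du+\tfrac12\nabla_\z^2 u\,\tau'$ and $D\tilde u+\tfrac12\nabla_\z^2\tilde u\,\tau'$ are indistinguishable --- and this combination, together with the $y$-derivatives, is precisely how the derivatives enter \eqref{hjb} (note that $\nabla_\z u$ does not appear there at all). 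If you retarget your argument at this combination only, the corollary follows without any localization of the global rigidity theorem.
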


Let us emphasize that, without the regularity assumptions on $u,\tilde{u}$  the above result is false, see \parencite[Remark 8]{oberhauserExtensionFunctionalIto2012}.

\begin{proof}
    Let $y\in\R^n$ and $N_1$ be a $\prob^{\Wtaunull}$-neglible set, such that for all $\w\in C([0,\tau(0)],\R^d)\setminus N_1$, $u(\cdot,y,\w,\cdot)$ and $\tilde u(\cdot,y,\w,\cdot)$ are indistinguishable. By the regularity assumption ii), we may assume that the set where the processes coincide is independent of $\w$. Namely we find a $\prob^{\Wtau}$-neglible set $N_2$ such that for all $\w\in C([0,\tau(0)],\R^d)\setminus N_1$, all $\z\in D([0,T],\R^d)\setminus N_2$, it holds that
    \begin{align*}
        u(\id,y,\w,\z)=\tilde u(\id,y,\w,\z).
    \end{align*}
    Clearly outside of $N_1,N_2$ and for every $t\in[0,T]$, $k=1,2$,
    \begin{align}\label{cor:unique:eq:1}
        \nabla^k_y u(t,y,\w,\z)=\nabla^k_y \tilde u(t,y,\w,\z).
    \end{align}
     Concerning the causal derivatives, it follows from \parencite[Proposition 29]{oberhauserExtensionFunctionalIto2012} that for $\w\in C([0,\tau(0)],\R^d)\setminus N_1$, the processes $Du(\cdot,y,\w,\cdot)+\frac12\nabla_\z u(\cdot,y,\w,\cdot)\tau'$ is indistinguishable from the respective expression with $\tilde u$. Since regularity property $ii)$ holds in $\c12$, this yields again that outside of $N_1,N_2$ (adjust $N_2$ if necessary),
    \begin{align}\label{cor:unique:eq:2}
        Du(\id,y,\w,\z) +\frac12 \nabla_\z^2 u(\id,y,\w,\z)\tau'=D\tilde u(\id,y,\w,\z) +\frac12 \nabla_\z^2 \tilde u(\id,y,\w,\z)\tau'.
    \end{align}
    Now since $N:=(\Wtaunull)^{1}(N_1)\cup(\Wtau)^{-1}(N_2)$ is a $\prob$-neglible set, \eqref{cor:unique:eq:1} and \eqref{cor:unique:eq:2} imply the claim.
\end{proof}

\subsubsection{Rough Funtional It\^o Formula}
We apply the following rough functional It\^o formula \parencite[Theorem 3.1, Example 3.3]{bielertRoughFunctionalIto2024} in the regime of Brownian paths and with It\^o correction.

\begin{theo}[\hypertarget{theo:roughfuncInt}{Rough Functional It\^{o} Formula}]
	Let $\alpha\in((\sqrt{13}-1)/6, 1/2)$. Let $X$ be $\alpha$-H\"older continuous and have finite quadratic variation along some partition $\partition$ with $\vert\partition\vert\to 0$. Let further $F\in\mathbb{C}^{1,4}_b$ be a causal map such that for $k=1,2,3$, $\nabla^kF\in C^{1,1}_b$ and for $k=0,1,2$, $\nabla^kF$ and $D\nabla^kF$ are Lipschitz continuous for fixed times with bounded Lipschitz constants.  
	Then 
	\begin{align*}
		F(T, X) = F(0, X) + \int_0^T DF(\id, X) \dd \lambda + \int_0^T \nabla F(\id, X) \dd \Xb + \frac12 \int_0^T \la \nabla^2 F(\id, X), \dd[X]\ra
	\end{align*}
	where for $\X\colon \Delta_T\to \Sym{d}$, $\X(s,t):= \frac12((X(t)-X(s))^{\otimes 2} - ([X](t)-[X](s)))$ and
	\begin{align}\label{theo:roughfuncInt:eq}
		\int_0^T \nabla F(\id, X) \dd \Xb := \lim_{\vert \partition\vert \to 0} \sum_{[s,t]\in\partition} \langle\nabla F(s, X),(X(t)-X(s))\rangle +  \langle\nabla^2 F(s, X),\X(s,t)\rangle,
	\end{align}
	is a well defined limit along arbitrary partitions with mesh tending to zero.
\end{theo}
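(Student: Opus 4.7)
The approach is a telescoping-expansion argument along a sequence of partitions $\partition = (\partition_n)$ with $\vert\partition_n\vert\to 0$ along which $X$ has finite quadratic variation. I start by writing
\begin{align*}
F(T,X)-F(0,X) = \sum_{[s,t]\in\partition_n} \lb F(t,X_t) - F(s,X_s)\rb
\end{align*}
and, on each sub-interval, insert the auxiliary path $Z_{s,t}:=X_s + (X(t)-X(s))\one_{[s,T]}$ to obtain the three contributions
\begin{align*}
F(t,X_t)-F(s,X_s) = \underbrace{\lb F(t,X_s)-F(s,X_s)\rb}_{(\mathrm{I})} + \underbrace{\lb F(t,Z_{s,t})-F(t,X_s)\rb}_{(\mathrm{II})} + \underbrace{\lb F(t,X_t)-F(t,Z_{s,t})\rb}_{(\mathrm{III})},
\end{align*}
corresponding respectively to a pure time change of the frozen path $X_s$, a pure vertical perturbation of $X_s$ by $h := X(t)-X(s) \in \R^d$ at time $t$, and a ``bumpy'' path residual supported on $[s,t]$ with $\vert X_t-Z_{s,t}\vert_\infty\lesssim \vert X\vert_\alpha\vert t-s\vert^\alpha$.

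For (I), the fundamental theorem of calculus together with Definition~\ref{defi:nabla} yields $F(t,X_s)-F(s,X_s)=\int_s^t DF(u,X_s)\dd u$; summed over $\partition_n$ and combined with continuity of $DF$, this converges to $\int_0^T DF(\id,X)\dd\lambda$. For (II), the map $\R^d\ni h\mapsto \Psi(h):=F(t,X_s+h\one_{[s,T]})$ is three times continuously differentiable with derivatives identified by Definition~\ref{defi:D} with $\nabla^k F(t,X_s)$, so a Taylor expansion at $h=0$ gives
\begin{align*}
F(t,Z_{s,t})-F(t,X_s) = \la\nabla F(t,X_s),h\ra + \tfrac12 \la \nabla^2 F(t,X_s), h^{\otimes 2}\ra + R^{(2)}_{s,t}
\end{align*}
with $\vert R^{(2)}_{s,t}\vert\lesssim \vert\nabla^3 F\vert_\infty \vert X\vert_\alpha^3 \vert t-s\vert^{3\alpha}$; since $\alpha>1/3$, $\sum R^{(2)}_{s,t}\to 0$. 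The identity $h^{\otimes 2} = 2\X(s,t)+([X](t)-[X](s))$ splits the second-order part into a rough contribution $\la\nabla^2 F(s,X),\X(s,t)\ra$ and a quadratic-variation contribution $\tfrac12\la\nabla^2 F(s,X),[X](t)-[X](s)\ra$; swapping the evaluation point $X_s$ by $X$ inside $\nabla^kF$ produces only Lipschitz errors that are summable to zero. The first-order sum $\sum\la\nabla F(s,X),X(t)-X(s)\ra$ together with the rough part is precisely the Riemann sum in \eqref{theo:roughfuncInt:eq} and hence converges to $\int_0^T\nabla F(\id,X)\dd\Xb$, while the quadratic-variation part converges to $\tfrac12\int_0^T\la\nabla^2 F(\id,X),\dd[X]\ra$ by Definition~\ref{defi:qv} and continuity of $\nabla^2F$.

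The hard part will be controlling (III): a direct Lipschitz estimate only yields $\sum\vert F(t,X_t)-F(t,Z_{s,t})\vert \lesssim \vert\partition_n\vert^{\alpha-1}$, which diverges. To resolve this I would exploit the strengthened regularity $F\in\mathbb{C}^{1,4}_b$ together with the fact that the bumpy perturbation $X_t-Z_{s,t}$ vanishes at both endpoints of $[s,t]$. A second vertical Taylor expansion of $F(t,\cdot)$ anchored at $Z_{s,t}$, combined with the Lipschitz continuity of $\nabla^kF$ and $D\nabla^kF$ in fixed-time arguments guaranteed by the hypotheses, shows that the leading order of (III) cancels with the correction needed to re-anchor the evaluation points in (II) at $(s,X)$ rather than at $(t,X_s)$, leaving a remainder of order $\vert t-s\vert^{1+\alpha}$ per interval, which is summable and vanishes as $\vert\partition_n\vert\to 0$. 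This is the rough-path analogue of the bumpy-path argument underlying Dupire's functional It\^o formula, and it is precisely here that the extra regularity hypotheses on the higher causal derivatives do essential work. Collecting the limits of (I), (II), and (III) then yields the stated identity, and the convergence argument for (II) simultaneously establishes the existence of the rough integral in \eqref{theo:roughfuncInt:eq}.
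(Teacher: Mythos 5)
A preliminary remark: the paper does not prove this statement. It is imported verbatim from \parencite[Theorem 3.1, Example 3.3]{bielertRoughFunctionalIto2024} and used as a black box, so there is no in-paper proof to compare your attempt against; it has to stand on its own.

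Judged that way, it has a genuine gap exactly where you flag ``the hard part'', namely the residual (III), and the proposed resolution does not work. The causal space derivative of Definition \ref{defi:D} only controls perturbations of the form $h\one_{[t,T]}$, i.e.\ a constant bump from the current time onward; it gives no handle whatsoever on the perturbation $X_t-Z_{s,t}$, which is supported in the interior of $[s,t]$ and vanishes at both endpoints. A ``second vertical Taylor expansion anchored at $Z_{s,t}$'' therefore cannot even be written down from the stated hypotheses. Moreover, the claimed cancellation with the re-anchoring of (II) is illusory: moving the base point of (II) from $(t,X_s)$ to $(s,X)$ costs $\vert\nabla F(t,X_s)-\nabla F(s,X_s)\vert\,\vert h\vert\lesssim \vert D\nabla F\vert_\infty\,\vert t-s\vert^{1+\alpha}$, which is already summably small on its own --- so there is no term of order $\vert t-s\vert^{\alpha}$ left over against which the a priori $O(\vert t-s\vert^{\alpha})$ size of (III) could cancel. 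This is precisely why the classical Cont--Fourni\'e/Dupire argument never decomposes the true increment $F(t,X_t)-F(s,X_s)$ directly: it establishes the formula exactly for a piecewise-constant approximation of $X$ (where horizontal and vertical moves exhaust the increment and no interior residual arises) and only then passes to the limit using joint continuity of $F,DF,\nabla F,\nabla^2 F$ in $(t,X)$; the rough-path version in \parencite{bielertRoughFunctionalIto2024} instead runs a sewing-type argument based on the fact that $(\nabla F(\id,X),\nabla^2F(\id,X))$ is an $(\alpha,\alpha^2)$-H\"older $X$-controlled rough path (Remark \ref{rem:nablaFcontrolled}), which is where the Lipschitz hypotheses on $\nabla^kF$ and $D\nabla^kF$ actually enter. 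Two further symptoms that the sketch misses the real difficulty: the threshold $\alpha>(\sqrt{13}-1)/6$ is never used (you only ever invoke $\alpha>1/3$), and the convergence of the compensated Riemann sums in \eqref{theo:roughfuncInt:eq} along \emph{arbitrary} partitions --- which is part of the assertion and is exactly what the controlled-rough-path estimates deliver --- is asserted rather than proved.
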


\begin{rem}\label{rem:nablaFcontrolled}
	\parencite[Lemma 3.2]{bielertRoughFunctionalIto2024} shows that  $(\nabla F(\id, X), \nabla^2 F(\id, X))$ is an $(\alpha, \alpha^2)$ H\"older $X$-controlled rough path. In particular inspecting the proof of \parencite[Corollary 2.3]{bielertRoughFunctionalIto2024} shows that the H\"older constant $\vert\nabla^2 F(\id, X)\vert_\alpha$ depends on $\vert\nabla^3 F\vert_\infty$, $\sup_{r\in[s,t]} \{\mathrm{Lip}(\nabla^2F(r, \cdot)), \mathrm{Lip}(D \nabla^2F(r, \cdot))\}$ as well as $\vert X\vert_\alpha$. Similarly, $\vert R^{\nabla F(X),X}\vert_{\alpha+\alpha^2}$ depends on $\vert\nabla F\vert_\infty$, $\vert D\nabla F\vert_\infty$, $\vert\nabla^3 F\vert_\infty$, $\sup_{r\in[s,t]} \{\mathrm{Lip}(\nabla F(r, \cdot)), \mathrm{Lip}(D \nabla F(r, \cdot))\}$ as well as $\vert X\vert_\alpha$.
\end{rem}

The next theorem is an application of the \hyperlink{theo:roughfuncInt}{rough functional It\^{o} formula} to the path $(Y, \Wtau)$, where $Y$ solves \eqref{cSDE}. Since its proof relies on standard rough path techniques combined with Corollary \ref{cor:2varXW} it is deferred to the Appendix \ref{sec:appendix}. As already pointed out in \parencites[Remark 6.2.2.]{BallyVlad2016Sibp}, the statement holds for any suitable causal map $F$ on the same set with full measure.

\begin{theo}\label{theo:funcIto4SolXW}
	Assume the setting of Corollary \ref{cor:2varXW} and let $N$ be the $\prob$-negligible set obtained there. Then we may apply for $\omega\in\Omega\setminus N$ the \hyperlink{theo:roughfuncInt}{rough functional It\^{o} formula} with $(Y(\omega), \Wtau(\omega))$ and with all causal maps $F$ and $\alpha$ as described therein. On this set we obtain that
	\begin{align*}
		F(T, Y, \Wtau) - F(0, Y, \Wtau) &= \int_0^T DF(\id, Y, \Wtau) + H(\id,\phi,Y,\Wtau) \dd\lambda\\
		&\quad + \int_0^T \la \nabla_y F(\id, Y, \Wtau), f(Y)\dd\Wb\ra + \int_0^T \la\nabla_z F(\id,Y,\Wtau),\dd\Wtau\ra,
	\end{align*}
	where $H = H^{F, b,f,\tau}\colon [0,T]\times \R^d\times \R^m\times C([0,T],\R^d) \to \R$ is defined by
	\begin{align*}
		H(t,y,\varphi,\z):= \la \nabla_y F(t, y, \z), b(t, y,\varphi)\ra + \frac12[ \tr(f(y)^T\nabla_y^2 F(t, y, \z)f(y)) + \tr(\nabla_\z^2 F(t, y, \z))\tau'(t)]
	\end{align*}
     and $\nabla_y F$ denotes the gradient of $F(t, \cdot, \z)$ with respect to $y\in\R^n$ and $DF,\nabla_\z F$ the causal derivatives of $F(\cdot,y,\cdot)$ with respect to $(t,\z)\in [0,T]\times C([0,T],\R^d)$.
    The rough integral is the well-defined since
    \begin{align}
    \begin{split}\label{theo:funcIto4SolXW:Z}
        Z &:= \la\nabla_y F(\id, Y, \Wtau), f(Y)\ra,\\ Z' &:= \la\nabla_y^2 F(\id, Y, \Wtau), f(Y)^{\otimes 2}\ra + \la\nabla_y F(\id, Y,\Wtau), \nabla_y f(Y)\cdot f(Y)\ra
    \end{split}
    \end{align}
    defines an $\alpha$-H\"older $W$-controlled rough path by a Leibniz rule
    \begin{align}
        \begin{split}\label{theo:funcIto4SolXW:eq1}
    \vert Z'\vert_\alpha \lesssim_{\vert\nabla_y^2 F\vert_\infty, \vert f\vert_\infty^2, \vert\nabla_y F\vert_\infty, \vert\nabla_yf\vert_\infty} &\vert f(Y)\vert_\alpha + \vert \nabla_y^2 F(\id,Y,\Wtau)\vert_\alpha\\
    &\quad+ \vert \nabla_yf(Y)\cdot f(Y)\vert_\alpha + \vert\nabla_yF(\id,Y, \Wtau)\vert_\alpha,
        \end{split}\\
        \begin{split}\label{theo:funcIto4SolXW:eq2}
    \vert R^{Z, W}\vert_{\alpha + \alpha^2} 
    \lesssim_{T, \alpha, \vert f\vert_\infty, \vert\nabla F\vert_\infty, \vert \nabla^2 F\vert_\infty } &\vert R^{\nabla F(Y,\Wtau), (Y,\Wtau)}\vert_{\alpha + \alpha^2} + \vert Y\vert_\alpha \vert f(Y)\vert_\alpha\\
    &\quad + \vert R^{Y,W}\vert_{2\alpha} + \vert R^{f(Y), W}\vert_{2\alpha}
        \end{split}
    \end{align}
	and the last integral is a well-defined It\^o integral. 
\end{theo}

\subsection{Verification Theorem}
Any solution $u$ to the deterministic \eqref{hjb} equation relates to the value function $v\colon [0,T]\times \mathcal \R^n \to L^0(\prob)$ of~\eqref{valuefct} in the following way. 

\begin{theo}[Verification Theorem]\label{theo:verification} 
	Let
	\begin{align*}
		u\colon[0,T]\times \R^n\times C([0,\tau(0)],\R^d)\times D([0,T],\R^d)\to\R
	\end{align*}
	be Borel measurable. Assume that for $\prob^W$-\as\ $\w\in C[0,T],\R^d)$,  $u(\cdot, \wtaunull,\cdot)\in \mathbb{C}_b^{1,4}$ and that the assumptions of the \hyperlink{theo:roughfuncInt}{rough functional It\^{o} formula} are satisfied.
	\begin{enumerate}[i)]
		\item If $u$ satisfies \eqref{hjb} then for all $t\in [0,T)$, $y\in\R^n$, every representative of $v(t, y)$ is $\prob$-\as\ dominated by  $u(t,y,\Wtaunull,\Wtau)$.
		\item If further there exists for every $t\in[0,T]$, $y\in\R^n$, a control $\phi^*\in\adm$ such that $\lambda\vert_{[t, T]} \otimes \prob$-\as it holds \begin{align*}
			&\la\nabla_y u(\id,Y^{t,y,\phi^*},\Wtaunull,\Wtau), b(\id,Y^{t,y,\phi^*}, \phi^*)\ra\\
			&= \sup_{\varphi\in\R^m}\ \la\nabla_y u(\id,Y^{t,y,\phi^*},\Wtaunull,\Wtau), b(\id,Y^{t,y,\phi^*},\varphi)\ra,
		\end{align*}
		   then for every $t\in[0,T]$, $y\in \R^n$, $u(t,y,\Wtaunull,\Wtau)$ is a representative of $v(t,y)$.
	\end{enumerate}
\end{theo}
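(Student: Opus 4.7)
My plan is to apply the rough functional It\^o formula of Theorem~\ref{theo:funcIto4SolXW} pathwise to $u$ and then take conditional expectations with respect to $\F_{\tau(t)}$. Fix $(t,y)$ and $\phi\in\adm$. For $\omega$ outside the null set $N$ from Corollary~\ref{cor:2varXW}, set $F_\omega(s,y',\z):=u(s,y',\Wtaunull(\omega),\z)$, which by hypothesis is a causal map in $\mathbb{C}_b^{1,4}$ in its $(s,\z)$-arguments. Theorem~\ref{theo:funcIto4SolXW} applied on $[t,T]$ to $(Y^{t,y,\phi},\Wtau)$, together with the terminal condition $g(y')=u(T,y',\cdot,\cdot)$, yields
\begin{align*}
g\lb Y^{t,y,\phi}(T)\rb - u\lb t,y,\Wtaunull,\Wtau\rb
&= \int_t^T \lb DF_\omega + H(\id,\phi,Y^{t,y,\phi},\Wtau)\rb \dd\lambda\\
&\quad + \int_t^T \la \nabla_y F_\omega, f(Y^{t,y,\phi}) \dd\Wb\ra + \int_t^T \la \nabla_\z F_\omega, \dd\Wtau\ra.
\end{align*}

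The middle rough integral vanishes identically. The transport equation in \eqref{hjb} states $f(y')^T\nabla_y u(s,y',\wtaunull,\z)\equiv 0$ as an identity in $(s,y',\wtaunull,\z)$, so along the trajectory the $\Wb$-controlled integrand $s\mapsto f(Y^{t,y,\phi}(s))^T \nabla_y F_\omega(s,Y^{t,y,\phi}(s),\Wtau(s))$ is the zero path; its natural Gubinelli derivative is also zero, so every Riemann sum approximating the rough integral in Theorem~\ref{theo:funcIto4SolXW} vanishes. Equivalently, differentiating the transport equation in $y'$ shows that the two $\W$-corrections $\la \nabla_y^2 F_\omega, f(Y^{t,y,\phi})^{\otimes 2}\W\ra$ and $\la \nabla_y F_\omega, \nabla_y f(Y^{t,y,\phi})f(Y^{t,y,\phi})\W\ra$ cancel. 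Moreover, the first line of \eqref{hjb} gives $DF_\omega+H(\id,\phi,Y^{t,y,\phi},\Wtau)\le 0$ for every $\phi\in\adm$, with equality when $\phi=\phi^*$.

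The remaining integral $\int_t^T\la\nabla_\z F_\omega,\dd\Wtau\ra$ is an It\^o integral against the continuous $\F_\tau$-martingale $\Wtau$ with bounded $\F_\tau$-adapted integrand (boundedness from $u\in\mathbb{C}_b^{1,4}$; adaptedness from causality in $\z$ together with $\F_\tau$-adaptedness of $Y^{t,y,\phi}$ and $\Wtau$). It is therefore a true $\F_\tau$-martingale with zero $\F_{\tau(t)}$-conditional expectation. Since $u(t,y,\Wtaunull,\Wtau)$ is $\F_{\tau(t)}$-measurable (using $\F_{\tau(0)}\subset\F_{\tau(t)}$ and causality in $\z$), taking $\expE[\cdot\mid\F_{\tau(t)}]$ in the identity above yields
\[J(t,y,\phi)\le u(t,y,\Wtaunull,\Wtau)\quad \prob\text{-a.s.}\quad\forall \phi\in\adm,\]
with equality when $\phi=\phi^*$. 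Part (i) then follows by taking $\esssup_\phi$ along a countable upward-filtering maximising sequence supplied by Lemma~\ref{lem:updirec}. Part (ii) follows from the resulting chain $v(t,y)\ge J(t,y,\phi^*)=u(t,y,\Wtaunull,\Wtau)\ge v(t,y)$.

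The main obstacle will be the bookkeeping of exceptional $\prob$-null sets: the pathwise rough It\^o formula fails on an $\omega$-null set, the HJB identity is assumed only for $\prob^W$-a.e.~$\w$, and the vanishing of the rough integral relies on \emph{pointwise} validity of the transport equation. In part (i) these null sets must be consolidated into a single, $\phi$-independent null set so that the subsequent essential supremum over the uncountable family $\adm$ remains dominated by $u$ almost surely; the key point is that the cancellation argument uses only the deterministic identity $\nabla_y u\cdot f\equiv 0$, which holds off one fixed $\w$-null set, together with the Corollary~\ref{cor:2varXW} set, so no $\phi$-dependence enters the exceptional set.
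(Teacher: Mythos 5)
Your proposal is correct and follows essentially the same route as the paper: apply the rough functional It\^o formula of Theorem~\ref{theo:funcIto4SolXW} pathwise, kill the rough integral by combining the transport equation with its $y$-derivative (so that both the first-order term and the two $\W$-correction terms vanish), observe that the $\Wtau$-It\^o integral is a true $\F_\tau$-martingale, and use the sign of the parabolic part of \eqref{hjb} to conclude via the defining property \eqref{def:esssup} of the essential supremum. Your closing concern about a $\phi$-independent null set is in fact unnecessary for part (i): since the essential supremum is the least upper bound in $(L^0(\prob),\le)$, it suffices that $J(t,y,\phi)\le u(t,y,\Wtaunull,\Wtau)$ holds $\prob$-a.s.\ for each fixed $\phi$ separately, which is exactly what the paper uses.
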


\begin{proof}
	\begin{enumerate}[i)]
		\item For $\phi\in\adm$ we denote by $Y^\phi$ the solution to \eqref{cSDE} starting at $t\in[0,T)$ with initial condition $y\in\R^n$. The conditions on $u$ allow to apply the \hyperlink{theo:roughfuncInt}{rough functional It\^{o} formula} for $\prob$-\as\ $\omega\in\Omega$ with $u(\cdot,\Wtaunull(\omega),\cdot)$ and $(Y^\phi(\omega),\Wtau(\omega))$. Noting that 
		\begin{align}\label{theo:verification:eq:1}
			\int_t^T \la\nabla_\z u(\id,Y^\phi,\Wtaunull,\Wtau),\dd \Wtau\ra
		\end{align}
		is a well-defined It\^{o} integral, too, 
        (since $s\mapsto\partial_w u(s,Y^\phi(s),\Wtaunull,\Wtau)$ is adapted to $\F_\tau$
        and has continuous sample paths), we may apply Theorem $\ref{theo:funcIto4SolXW}$. Writing for $H^{u(\cdot,\Wtaunull(\omega),\cdot),b,f,\tau}(t,y,\varphi,\z)$ simply $H(t,y,\varphi,\Wtaunull(\omega),\z)$, it holds $\prob$-\as\ that
		\begin{align*}
			&u(T,Y^\phi(T),\Wtaunull,\Wtau) - u(t,Y^\phi(t),\Wtaunull,\Wtau)\\ 
			&= \int_t^T Du(\id,Y^\phi,\Wtaunull,\Wtau) + H(\id,Y^\phi,\phi,\Wtaunull,\Wtau) \dd \lambda\\
			&\quad + \int_t^T \la \nabla_y u(\id,Y^\phi,\Wtaunull,\Wtau), f(Y^\phi)\ra\dd\Wb + \int_t^T \la\nabla_\z u(\id,Y^\phi\id,\Wtaunull,\Wtau),\dd \Wtau\ra.
		\end{align*}
		By the initial condition for $Y^\phi$, $u(t,Y^\phi(t),\Wtaunull,\Wtau) = u(t,y,\Wtaunull,\Wtau)$ and by the terminal condition for $u$, $u(T,Y^\phi(T),\Wtaunull,\Wtau) = g(Y^\phi(T))$. The second equation in \eqref{hjb} implies for $\prob$-\as\ $\w\in C([0,T],\R^d)$ that for on $[t,T)$, $y\in\R^n$ also
		\begin{align*}
			0 &\equiv \nabla_y\la\nabla_y u(\id,\cdot,\Wtaunull,\wtau), f\ra (y)\\
			&= \nabla_y^2 u(\id,y,\Wtaunull,\wtau)f(y) + \nabla_yf(y)\nabla_yu(\id,y,\Wtaunull, \wtau).
		\end{align*}
		Consequently the rough integral equals zero $\prob$-\as. Taking the conditional expectation with respect to $\F_{\tau(t)}$, the It\^o integral vanishes, too.
		Moreover by \eqref{hjb}, it holds $\prob$-\as\ for every $s\in[t,T)$ that
		\begin{align*}
			&Du(s,Y^\phi(s),\Wtaunull,\Wtau) + H(s,Y^\phi(s),\phi,\Wtaunull,\Wtau)\\
			&= Du(s,Y^\phi(s),\Wtaunull,\Wtau) + \la b(s, Y^\phi(s), \phi), \nabla_y u(s,Y^\phi(s),\Wtaunull,\Wtau)\ra  \\
			&\quad + \frac12 \tr(f(Y^\phi(s))^T\nabla^2_y u(s,Y^\phi(s),\Wtaunull,\Wtau)f(Y^\phi(s)))  + \frac12 \tr(\nabla_\z^2 u(s,Y^\phi,\Wtaunull,\Wtau))\tau'(t) \le 0.
		\end{align*}
		So it follows with respect to \as-ordering in $L^0(\prob)$ that
		\begin{align*}
			\expE\lb g(Y^\phi(T)) \mid \F_{\tau(t)}\rb - u(t,y,\Wtaunull,\Wtau) \le 0.
		\end{align*}
		Recalling the characterizing properties \eqref{def:esssup} of the essential supremum, this yields the claim.
		\item Using $\phi=\phi^*$ in the calculations in part i) we obtain equality instead of the last inequality. Hence \eqref{def:esssup} implies that 
		\begin{align*}
			v(t,y) \ge u(t,y,\Wtaunull,\Wtau)
		\end{align*}
		in $L^0(\prob)$. Consequently, it holds for every $t\in[0,T]$, $y\in\R^n$ that
		\begin{align*}
			v(t, y) = \expE\lb g(Y^{t, y, \phi^*}(T))\mid \F_{\tau(t)}\rb.
		\end{align*}
	\end{enumerate}
\end{proof}

\subsection{Regular Value Functions Solve HJB}\label{sec:sec:regvalueHJB}
This section is devoted to the proof of Theorem \ref{theo:regularvaluesolvesHJB}, which shows that if there exists a regular modification of the value function $v$, then it solves \eqref{hjb}. Due to technical reasons explained in Remark \ref{rem:vbinsteadvblift}, it is convenient to first consider a regular modification $\vb$ parameterized instead by the rough lift $\Wtaunullb$. This is in line with \parencite{frizControlledRoughSDEs2024}, where conditioning on the full path on $[0,T]$ of a Brownian motion $W$ leads to a rough path parameter $\Xb$ in the value function. We thus can adopt some measurability results from there. In the end, using the lift from Example \ref{exmp:lift}, it is $u=\vb\circ\lift$ that solves \eqref{hjb}.\\

From now on call
\begin{align*}
	\bar{v}\colon [0,T]\times\R^n\times \RP_{\tau(0)} \times D([0,T],\R^d)\to [0,\infty)
\end{align*}
\hypertarget{goodvaluefct}{\textit{good version of $v$}} if for every $t\in[0,T]$, $y\in\R^n$, $\vb(t,y,\Wtaunullb,\Wtau)$ is a representative of the problem value $v(t,y)$ defined in \eqref{valuefct}
and if, in addition,
\begin{enumerate}[i)]
	\item for all $(t,y,\z)\in[0,T]\times\R^n\times D([0,T],\R^d)$, $\RP_{\tau(0)}\ni\wb\to \vb(t,y,\wb,\z)$ is Borel measurable,
	\item for all $\wb\in \RP_{\tau(0)}$, $\vb(\cdot,\wb,\cdot)\in\mathbb C_b^{1,4}$ bounded uniformly in $\wb$, i.e., 
	\begin{align*}
		\sup_{\wb\in \RP_{\tau(0)}} \{\vert D\vb(\cdot,\wb,\cdot)\vert_{\infty}, \vert\nabla^k\vb(\cdot,\wb,\cdot)\vert_{\infty},\ k=0,1,2,3,4\} <\infty,
	\end{align*}
	\item for $k=1,2,3$, $\nabla^k\vb(\cdot,\wb,\cdot)\in\mathbb{C}_b^{1,1}$ with $D\nabla^k\vb$ bounded uniformly in $\wb$,
	\item for  $l=0,1$ and $k=0,1,2$, $D^l\nabla^k\vb(\cdot,\wb,\cdot)$ are Lipschitz continuous in $(y,\z)$ uniformly in $(t,\wb)$, i.e.\ there exists $L>0$, such that for all $t\in[0,T]$, $\wb\in\RP_{\tau(0)}$, $y,\tilde y\in\R^n$ and $\z,\tilde{\z}\in D([0,T],\R^d)$ it holds
	\begin{align*}
		\vert D^l\nabla^k \vb(t,y,\wb,\z) - D^l\nabla^k \vb(t,\tilde y,\wb,\tilde{\z})\vert \le L(\vert y-\tilde y\vert - \vert\z-\tilde{\z}\vert_\infty).
	\end{align*}
\end{enumerate}

\subsubsection{Conditional Distributions}\label{sec:sec:sec:condDist}
We construct a conditional distribution of $(Y^{t,y,\phi},\phi,\Wtaunullb,\Wtau)$ given $\F_{\tau(t)}$ in order to have control over null sets in the main result Theorem \ref{theo:regularvaluesolvesHJB}. The goal is to pick a sufficiently regular modification of 
\begin{align*}
[t,\tau(t)]\ni s\mapsto \expE(\vb(s,Y^{t,y,\phi}(s),\Wtaunullb,\Wtau)-\vb(t,y,\Wtaunullb, \Wtau)\mid\F_{\tau(t)})
\end{align*}
after applying the \hyperlink{theo:roughfuncInt}{rough functional It\^o formula}.  
Moreover we calculate in Corollary \ref{cor:qvCondExp} the quadratic variation of (a suitable modification of)
\begin{align}\label{eq:cP4RI}
[t,\tau(t)]\ni s \mapsto \expE\lb\int_t^s\la\nabla_y\vb(\id,Y^{t,y,\phi},\Wtaunullb,\Wtau), f(Y^{t,y,\phi})\ra\dd\Wb\mid\F_{\tau(t)}\rb.
\end{align}
Recalling Definition \ref{defi:qv}, the notion of quadratic variation applies to continuous (deterministic) paths. Hence we need a continuous version of \eqref{eq:cP4RI}. Since $(\Wb(u), u\in[t,s])\in\F_{\tau(t)}$, we formally expect for $s\in[t,\tau(t)]$ that
\begin{align*}
	&\left[ \expE\lb \int_t^\cdot \la\nabla_y\vb(\id,Y^{t,y,\phi},\Wtaunullb,\Wtau),f(Y^{t,y,\phi})\ra\dd\Wb\mid \F_{\tau(t)} \rb \right]_s\\
    &= \int_t^s \big\vert\expE(f(Y^{t,y,\phi})^T\nabla_y\vb(\id,Y^{t,y,\phi},\Wtaunullb,\Wtau)\mid \F_{\tau(t)})\big\vert^2\dd\lambda.
	\end{align*}

We refer to \parencite[Section 2.3]{claissePseudoMarkovPropertyControlled2016} for a discussion of several subtle measurability issues concerning regular conditional probabilities on probability spaces $(\Omega, \F, \prob)$ where the $\sigma$-algebra $\F$ is completed. A convenient approach is to work instead with the conditional distribution of a random variable $X\colon \Omega\to T$ given a sub-$\sigma$-algebra $\G$ of $\F$. They have the advantage that $\F$ may contain all $\prob$-negligible sets and so can $\G$, only the $\sigma$-algebra for $T$ must not be completed with negligible sets of $\prob^X$, cf. \parencite[Theorem 10]{fadenExistenceRegularConditional1985}. 
Recall first the notion of conditional distributions from \parencite[Section 10.2]{dudley}.

\begin{defi}\label{defi:cP}
	Let $(\Omega, \F,\prob)$ be a probability space, $(T, \mathcal T)$ be measurable spaces. Further let $X\colon \Omega\to T$ be measurable and $\G$ a sub-$\sigma$-algebra of $\F$. Then a function $\cP{X}{\G}\colon\mathcal T\times\Omega\to[0,1]$ is called conditional distribution of $X$ given $\G$, if
	\begin{enumerate}[i)]
		\item for $\cP{}{\G}$-\as\ $\omega\in\Omega$, $\cP{X}{\G}(\cdot, \omega)$ is a probability measure on $\mathcal T$,
		\item for each $A\in\mathcal T$, $\cP{X}{\G}(A, \cdot)$ is $\G$-measurable and a representative of $\expE(\one_{X^{-1}(A)}\mid\G)$.
	\end{enumerate}
\end{defi}
We point out that, if it exists, we always find a version $\cP{X}{\G}$, such that $i)$ holds for every $\omega\in\Omega$. Moreover we recall that if $T$ is Polish and $\mathcal T$ the Borel $\sigma$-algebra, then conditional distributions for random variables always exists, cf. \parencite[10.2.2 Theorem]{dudley}. 

We first focus on solutions for $Y^\phi$ started at time $t=0$ and will explain later how to transform the general case to this one. In our setting,
\begin{align*}
Y^\phi\colon \Omega \to C([0,T],\R^n),\quad \Wtau\colon \Omega \to C([0,T],\R^d) \quad  \Wtaunullb\colon\Omega\to \RP_{\tau(0)},\quad \phi\colon \Omega\to L^0([0,T],\R^m)
\end{align*}
indeed take values in Polish spaces.
Fix $y\in\R^n$. We consider the 'solution set' $S$, which is the collection of 
of $(\y,\varphi,\wb)\in C([0,T],\R^n)\times L^0([0,T],\R^m)\times \RP_{\tau(0)}$, such that $\y$ solves the rough differential equation
	\begin{align}\label{solutionsset}
	 	\dd \y = b(\id,\phi,\y)\dd\lambda + f(\y)\dd\wb,\ \y(0)=y\text{ on }[0,\tau(0)].
	\end{align}

\begin{lem}\label{lem:solutionset} It holds that $S\in \borel_{\vert\cdot\vert_\infty}(C([0,T],\R^n))\otimes\borel_{d_{KF}}(L^0([0,T],\R^m))\otimes \mathfrak{C}_{\tau(0)}$ and $C^\alpha([0,T], \R^m)\in\borel_{\vert\cdot\vert_\infty}(C([0,T],\R^m))$.
\end{lem}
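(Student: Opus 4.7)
The plan is to reduce both assertions to measurability of the solution map in Theorem~\ref{theo:sol2cSDE}(iii) together with a soft lower‑semicontinuity argument for the Hölder space.

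For the second assertion about $C^\alpha([0,T],\R^m)$, I would exploit uniform continuity of $f\in C([0,T],\R^m)$ to restrict the Hölder supremum to rational pairs:
\[
\vert f\vert_\alpha \;=\; \sup_{s,t\in \Q\cap[0,T],\, s<t}\, \frac{\vert f(t)-f(s)\vert}{\vert t-s\vert^\alpha}.
\]
Since point evaluation is $\vert\cdot\vert_\infty$-continuous, each term in the supremum is continuous in $f$, so $f\mapsto\vert f\vert_\alpha$ is a countable supremum of continuous functions, hence lower semicontinuous. Writing
\[
C^\alpha([0,T],\R^m)=\bigcup_{n\in\N}\bigl\{f\in C([0,T],\R^m):\vert f\vert_\alpha\le n\bigr\}
\]
then exhibits the space as an $F_\sigma$ set in $\borel_{\vert\cdot\vert_\infty}(C([0,T],\R^m))$.

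For the measurability of $S$, the key observation is uniqueness: by Theorem~\ref{theo:sol2cSDE}(i), a triple $(y,\varphi,\wb)$ belongs to $S$ if and only if the restriction $y\vert_{[0,\tau(0)]}$ coincides with the unique solution to \eqref{solutionsset}, which I denote by $F^{\tau(0)}(\varphi\vert_{[0,\tau(0)]},\wb)$. The solution map $F^{\tau(0)}$ with horizon $\tau(0)$ is Borel measurable by the same argument as Theorem~\ref{theo:sol2cSDE}(iii). I would then consider the auxiliary map
\[
\Phi(y,\varphi,\wb) := \bigl(\,y\vert_{[0,\tau(0)]},\; F^{\tau(0)}(\varphi\vert_{[0,\tau(0)]},\wb)\,\bigr)
\]
with values in $C([0,\tau(0)],\R^n)\times C([0,\tau(0)],\R^n)$, so that $S=\Phi^{-1}(\Delta)$ for the diagonal $\Delta$, which is closed in this metric space. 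Measurability of $\Phi$ reduces to continuity of the two restriction maps (path restriction is $\vert\cdot\vert_\infty$-continuous, and restriction of equivalence classes is continuous in the Ky Fan metric since convergence in Lebesgue measure on $[0,T]$ passes to any subinterval) combined with the measurability of $F^{\tau(0)}$.

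The main subtlety I expect is keeping the Borel structures straight: Theorem~\ref{theo:sol2cSDE}(iii) is stated for horizon $T$, so I would briefly observe that its proof carries over verbatim to horizon $\tau(0)$, and check that the natural restriction $L^0([0,T],\R^m)\to L^0([0,\tau(0)],\R^m)$ is continuous with respect to $d_{KF}$. Once these are in place, both parts of the lemma follow without further analytic input.
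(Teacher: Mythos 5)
Your proof is correct and follows essentially the same route as the paper: the paper also restricts the H\"older supremum to rationals for the second claim, and for $S$ it writes $S=\mathrm{Res}^{-1}(\mathrm{graph}(F))$ with the measurability of $\mathrm{graph}(F)$ justified by a cited measurable-graph lemma, which is exactly the diagonal-preimage argument you spell out directly (both hinging on the target being a separable metric space so that the product Borel $\sigma$-algebra coincides with the Borel $\sigma$-algebra of the product and the diagonal is measurable). Your explicit checks of the continuity of the restriction maps and of the horizon-$\tau(0)$ version of the solution map are precisely the points the paper also relies on.
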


\begin{proof}
	By Theorem \ref{theo:sol2cSDE} part $iii)$, the solution map 
	\begin{align*}
		F\colon L^0([0,\tau(0)],\R^m)\times \RP_{\tau(0)} \to C([0,\tau(0)],\R^n)	
	\end{align*}
	is $\borel_{\alpha}(L^0([0,\tau(0)],\R^m)\otimes\mathfrak{C}_{\tau(0)}-\borel_{\vert\cdot\vert_\infty}(C([0,\tau(0)],\R^n))$ measurable.
	Since $\borel_{\alpha}(L^0([0,\tau(0)],\R^m)\otimes\mathfrak{C}_{\tau(0)}$ and $\borel_{\vert\cdot\vert_\infty}(C([0,\tau(0)],\R^d))$ are countable generated (Polish) it follows that the graph
	\begin{align*}
		\mathrm{graph}(F) =  \{(\y,\varphi,\wb)\in C([0,\tau(0)],\R^d)\times L^0([0,\tau(0)],\R^m)\times \RP_{\tau(0)}\colon \y = F(\varphi, \wb)\}
	\end{align*}
	belongs to $\sigma(\borel_{\vert\cdot\vert_\infty}(C([0,\tau(0)],\R^d))\times \borel_{\alpha}(L^0([0,\tau(0)],\R^m)\otimes \mathfrak{C}_{\tau(0)})$, see for instance \parencite[Proposition 2.1]{Musial1980}. Being Polish implies that the latter $\sigma$-algebra equals $\borel_{\vert\cdot\vert_\infty}(C([0,\tau(0)],\R^d))\otimes \borel_{\alpha}(L^0([0,\tau(0)],\R^m)\otimes \mathfrak{C}_{\tau(0)}$, cf.\ \parencite[4.1.7]{dudley}. Finally let $\mathrm{Res}\colon C([0,T],\R^d)\times L^0([0,T],\R^m)\times \RP_{\tau(0)}\to C([0,\tau(0)],\R^d)\times L^0([0,\tau(0)],\R^m)\times \RP_{\tau(0)}$, $(\y,\varphi,\wb)\mapsto(\y_{\mid [0,\tau(0)]}, \varphi_{\mid [0,\tau(0)]}, \wb)$. Then $S=\mathrm{Res}^{-1}(\mathrm{graph}(F))$ and the first assertion follows since the restriction $\mathrm{Res}$ is continuous.\\
	For the second claim notice that since for every $t\in[0, T]$, the coordinate projection $C([0, T], \R^m)\ni z\mapsto z(t)$ 
    are clearly measurable and that we can restrict supremum in $\vert\cdot\vert_\alpha\colon C([0,T],\R^d)\to \R\cup\{+\infty\}$  to rationals.
\end{proof}

\begin{rem}[$\vb$ instead of $\vb\circ\lift$]\label{rem:vbinsteadvblift}
In the proof we used that the solution map $F$ is Borel measurable. As mentioned above, it is crucial that $\mathfrak{C}_{\tau(0)}$ is not completed with $\prob^{\Wtaunullb}$-negligible sets. A formulation on the measurable space $(C([0,\tau(0)],\R^d), \borel{\vert\cdot\vert_{\infty}}(C([0,\tau(0)],\R^d)))$ is not possible. Indeed this amounts to considering in the proof $F\circ\lift$ instead of $F$. Recalling Example \ref{exmp:lift}, $\lift$ is $\borel{\vert\cdot\vert_{\infty}}(C([0,\tau(0)],\R^d))\vee\mathscr N-\mathfrak{C}_{\tau(0)}$ measurable, hence $S\in\borel_{\vert\cdot\vert_\infty}(C([0,T],\R^n))\otimes\borel_{d_{KF}}(L^0([0,T],\R^m))\otimes \borel{\vert\cdot\vert_{\infty}}(C([0,\tau(0)],\R^d))\vee\mathscr N$. 
\end{rem}

To simplify notation a bit we write $C^\alpha:= C^\alpha([0,T], \R^d)$  and for the Polish state space
\begin{align}\label{Ctau}
	U_{\tau(0)}:=C([0,T],\R^n)\times L^0([0,T],\R^m)\times \RP_{\tau(0)}\times C([0,T],\R^d).
\end{align}
The proof of the next technical lemma can be found in the Appendix \ref{sec:appendixB}.

\begin{lem}\label{lem:condDist} 
Let $W\colon\Omega\times[0,T]\to\R^d$ be a Brownian motion and $\tau$ a continuously differentiable time change. Let further $y\in\R^d$ and $\phi\in\adm$ and let $Y^\phi$ denote the solution to \eqref{cSDE} with $Y^\phi(0)=y$. Then there exist a conditional distribution
\begin{align*}
\cP{Y^\phi,\phi,\Wtaunullb, \Wtau}{\F_{\tau(0)}}\colon \Omega\times \borel(U_{\tau(0)}) \to [0,1]
\end{align*}
of $(Y^\phi,\phi,\Wtaunullb, \Wtau)$ given $\F_{\tau(0)}$ and similar $\cP{Y^\phi, \phi,\Wtau}{\F_{\tau(0)}}$ such that 
\begin{enumerate}[i)]
\item for every $\omega\in\Omega$, $\cP{Y^\phi,\phi,\Wtaunull, \Wtau}{\F_{\tau(0)}}(S\times C^\alpha, \omega) =1$, where $S$ is the solutions set defined in \eqref{solutionsset},
\item for every nonnegative Borel measurable function $f\colon U_{\tau(0)} \to [0,\infty]$ and every $\omega\in\Omega$ it holds that
\begin{align*}
&\int f(\y,\varphi, \wb, \z)\cP{Y^\phi,\phi,\Wtaunullb, \Wtau}{\F_{\tau(0)}}(\dd(\y,\varphi, \wb, \z), \omega)\\ 
&\quad= \int f(\y,\varphi, \wb, \z)\cP{Y^\phi,\phi, \Wtau}{\F_{\tau(0)}}(\dd(\y,\varphi, \z), \omega)_{\mid \wb = \Wtaunullb(\omega)},
\end{align*}
\item for every nonnegative Borel measurable function $f\colon \RP_{\tau(0)}\times C([0,T],\R^d) \to [0,\infty]$ and every $\omega\in\Omega$, it holds that
\begin{align*}
\int f(\wb, \z)\cP{Y^\phi,\phi,\Wtaunullb, \Wtau}{\F_{\tau(0)}}(\dd(\y,\varphi, \wb, \z), \omega) = \int f(\wb, \z) \prob^{\Wtau}(\dd\z)_{\mid \wb = \Wtaunullb(\omega)}.
\end{align*}
\end{enumerate}
\end{lem}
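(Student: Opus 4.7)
My approach would be to construct $\cP{Y^\phi,\phi,\Wtau}{\F_{\tau(0)}}$ first, using the existence theorem for regular conditional distributions on Polish spaces \parencite[10.2.2]{dudley}, which applies since $C([0,T],\R^n)\times L^0([0,T],\R^m)\times C([0,T],\R^d)$ is Polish. From this I would define the four-variable kernel by inserting the $\F_{\tau(0)}$-measurable random element $\Wtaunullb$ as a Dirac mass,
\[
\cP{Y^\phi,\phi,\Wtaunullb,\Wtau}{\F_{\tau(0)}}(A,\omega) \;:=\; \int \one_A\bigl(y,\varphi,\Wtaunullb(\omega),\z\bigr)\,\cP{Y^\phi,\phi,\Wtau}{\F_{\tau(0)}}\bigl(\dd(y,\varphi,\z),\omega\bigr),
\]
for $A\in\borel(U_{\tau(0)})$. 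The right-hand side is measurable in $\omega$ by the $\F_{\tau(0)}$-measurability of $\Wtaunullb$, and property (ii) is then immediate for indicators and extends to non-negative Borel functions by monotone convergence.

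For property (iii) I would exploit that $\Wtau(t)=W(\tau(t))-W(\tau(0))$ depends only on the increments of $W$ on $[\tau(0),T]$ and is therefore independent of $\F_{\tau(0)}$. Together with the $\F_{\tau(0)}$-measurability of $\Wtaunullb$, this gives $\expE(f(\Wtau)\mid\F_{\tau(0)})=\int f\,\dd\prob^{\Wtau}$ $\prob$-a.s.\ for every bounded Borel $f$, and a standard monotone class argument along a countable generator of $\borel(C([0,T],\R^d))$ produces a single $\prob$-null set $N_1\in\F_{\tau(0)}$ off which the $\z$-marginal of $\cP{Y^\phi,\phi,\Wtau}{\F_{\tau(0)}}(\cdot,\omega)$ coincides with $\prob^{\Wtau}$. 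For property (i), by Theorem \ref{theo:sol2cSDE} the random element $Y^\phi$ is the unique $\alpha$-Hölder $W$-controlled solution of \eqref{cSDE} with driver $\Wb$, whose restriction to $[0,\tau(0)]$ solves the rough differential equation with driver $\Wtaunullb$; combined with measurability of $S$ from Lemma \ref{lem:solutionset} and the Kolmogorov criterion $\prob(\Wtau\in C^\alpha)=1$, this gives $\prob\bigl((Y^\phi,\phi,\Wtaunullb,\Wtau)\in S\times C^\alpha\bigr)=1$, so the conditional probability equals $1$ off a further $\prob$-null set $N_2\in\F_{\tau(0)}$.

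The main obstacle is upgrading these three a.s.\ statements to hold \emph{for every} $\omega\in\Omega$, which is essential for the pathwise arguments in Section \ref{sec:sec:regvalueHJB} where one cannot discard null sets on an integrand-by-integrand basis. My plan is to redefine both kernels on the $\F_{\tau(0)}$-measurable exceptional set $N_1\cup N_2$ to equal a fixed reference measure that satisfies (i) and (iii) pointwise; concretely, one can use the $\F_{\tau(0)}$-measurable map
\[
\omega\;\longmapsto\;\delta_{F(0,\Wtaunullb(\omega))}\otimes\delta_{0}\otimes\delta_{\Wtaunullb(\omega)}\otimes\prob^{\Wtau},
\]
where $F$ is the measurable solution map from Theorem \ref{theo:sol2cSDE} (so that $(F(0,\wb),0,\wb)\in S$ for every $\wb\in\RP_{\tau(0)}$). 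Since $N_1\cup N_2\in\F_{\tau(0)}$, this modification preserves the defining property of the regular conditional distribution (a $\prob$-null modification of a conditional expectation is again a conditional expectation) as well as the disintegration identity (ii), while making (i) and (iii) hold for every $\omega\in\Omega$ simultaneously.
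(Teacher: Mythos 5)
Your proposal is correct, and it reaches the same object by a noticeably different construction. The paper works on the extended space $\hat\Omega=\Omega\times C([0,T],\R^d)$ with $\hat\prob(G\times D)=\prob(G\cap\{\Wtau\in D\})$, takes the conditional law of $(Y^\phi,\phi)$ given the pair $(\id_\z,\Wtaunullb)$ there, tensors with $\delta_{\Wtaunullb(\omega)}$, and only then integrates the $\z$-variable out against $\prob^{\Wtau}$; this makes property $iii)$ hold for \emph{every} $\omega$ by fiat (the $\z$-marginal is literally $\prob^{\Wtau}$ in \eqref{lem:condDist:eq:1}), and the only patch needed is the redefinition of $\prob_{\omega,\z}$ on a $\hat\prob$-null set to secure $i)$. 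You instead never disintegrate in $\z$: you take the ordinary regular conditional law of the triple $(Y^\phi,\phi,\Wtau)$ given $\F_{\tau(0)}$, insert $\delta_{\Wtaunullb(\omega)}$, get $ii)$ for free, and then must \emph{prove} $iii)$ as an a.s.\ statement from the independence of $\Wtau$ and $\F_{\tau(0)}$, upgrading it to a single exceptional set via a countable generating $\pi$-system before patching $N_1\cup N_2$ with a reference measure. Both routes are sound; yours is arguably more economical (no extended space, no conditional law given $\id_\z$), at the price of the extra monotone-class step for $iii)$ and of having to verify that the patched kernels still satisfy $ii)$ pointwise — which your choice of reference measure $\delta_{F(0,\Wtaunullb(\omega))}\otimes\delta_0\otimes\delta_{\Wtaunullb(\omega)}\otimes\prob^{\Wtau}$ does, exactly as the paper's own Dirac patch \eqref{lem:condDist:eq:5} does. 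Two small points you should spell out if you write this up: the joint measurability of $(\omega,\wb)\mapsto\int\one_A(y,\varphi,\wb,\z)\,\cP{Y^\phi,\phi,\Wtau}{\F_{\tau(0)}}(\dd(y,\varphi,\z),\omega)$ for general Borel $A$ (a monotone class argument from product sets), and the fact that $F(0,\Wtaunullb(\omega))$, a path on $[0,\tau(0)]$, must be extended to $[0,T]$ to land in $S\subset C([0,T],\R^n)\times L^0([0,T],\R^m)\times\RP_{\tau(0)}$ — the same cosmetic issue the paper's own patch has.
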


\subsubsection{Quadratic Variation of Conditional Expectation}
We first calculate the quadratic variation of \eqref{eq:cP4RI} in the special case $t=0$, then use a transformation to obtain the result for arbitrary $t\in[0,T]$ in Corollary \ref{cor:qvCondExp}.

\begin{lem}\label{lem:qvCondExp} 
	Assume there exists a \hyperlink{goodvaluefct}{good version} $\vb$ of $v$. Let $\phi\in\adm$, denote by $Y^\phi$ the solution to \eqref{cSDE} with $Y^\phi(0)=y\in\R^n$ and consider the null set $N$ and  the sequence of partitions $(\partition_n)$ from Corollary \ref{cor:2varXW}. Then it holds for every $t\in[0,\tau(0)]$ that
	\begin{align}\label{lem:qvCondExp:eq}
		\begin{split}
		&\left[ \int_{S\times C^\alpha} \int_0^\cdot \la\nabla_y \vb(\id,\y,\wb,\z), f(\y)\ra\dd\wb\ \cP{Y^\phi,\phi,\Wtaunullb, \Wtau}{\F_{\tau(0)}}(\dd(\y,\varphi,\wb,\z), \omega)\right](t)\\
		&\quad = \int_0^t \left\vert \int \la\nabla_y \vb(\id,\y,\wb,\z), f(\y)\ra \cP{Y^\phi,\phi,\Wtau}{\F_{\tau(0)}}(\dd(\y,\varphi,\z), \omega)\right\vert^2 \dd\lambda_{\big\vert \wb = \Wtaunullb(\omega)},
		\end{split}
	\end{align}
	where the solution set $S$ is defined in \eqref{solutionsset} and  $\cP{Y^\phi,\phi,\Wtaunullb, \Wtau}{\F_{\tau(0)}}$ (resp. $\cP{Y^\phi,\phi,\Wtau}{\F_{\tau(0)}}$) are the conditional distributions of $(Y^\phi,\Wtaunullb,\Wtau)$ (resp. $(Y,\phi,\Wtau)$) given $\F_{\tau(0)}$ constructed in Lemma \ref{lem:condDist}.
\end{lem}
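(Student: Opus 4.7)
The plan is to use the conditional distributions from Lemma \ref{lem:condDist} to rewrite the LHS as a pathwise rough integral against the now-deterministic Brownian rough path $\Wtaunullb(\omega)$, to bring the outer conditional expectation inside this integral via a Fubini-type argument, and finally to compute the quadratic variation via the standard rough-integral expansion combined with Lemma \ref{lem:2varW}. In detail, by Lemma \ref{lem:condDist}(ii)-(iii), under $\cP{Y^\phi,\phi,\Wtaunullb,\Wtau}{\F_{\tau(0)}}(\cdot,\omega)$ the $\wb$-marginal is Dirac at $\Wtaunullb(\omega)$ while the joint law of $(y,\varphi,\z)$ equals $\cP{Y^\phi,\phi,\Wtau}{\F_{\tau(0)}}(\cdot,\omega)$, supported on $S\times C^\alpha$. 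For each such $(y,\varphi,\z)$, $(y,f(y))$ is an $\alpha$-H\"older $\Wtaunullb(\omega)$-controlled rough path, so by Gubinelli composition the integrand $H:=\langle\nabla_y\vb(\cdot,y,\Wtaunullb(\omega),\z),f(y)\rangle$ together with a Gubinelli derivative $H'$ forms a controlled rough path to which Theorem \ref{theo:RI} applies. Using the a-priori bounds of Theorem \ref{theo:sol2cSDE}(ii) (uniform in $\varphi$ for fixed $\omega$) and the uniform-in-$\wb$ bounds from the good version assumption, the rough-integral remainder is controlled uniformly in $(y,\varphi,\z)\in S\times C^\alpha$. Dominated convergence then permits exchanging the outer integration with the Riemann-sum limit defining the rough integral; setting
\begin{align*}
    G_\omega(r):=\int\langle\nabla_y\vb(r,y,\wb,\z),f(y)\rangle\,\cP{Y^\phi,\phi,\Wtau}{\F_{\tau(0)}}(\dd(y,\varphi,\z),\omega)\Big|_{\wb=\Wtaunullb(\omega)}
\end{align*}
and the corresponding Gubinelli derivative $G_\omega'$ analogously, the LHS process $I(\cdot,\omega)$ admits on each partition interval $[u,v]$ the expansion
\begin{align*}
    I(v,\omega)-I(u,\omega)=G_\omega(u)\bigl(\Wtaunull(\omega)(v)-\Wtaunull(\omega)(u)\bigr)+\langle G_\omega'(u),\Wt^{\tau(0)}(\omega)(u,v)\rangle+\bar R_\omega(u,v),
\end{align*}
with $|\bar R_\omega(u,v)|\lesssim|v-u|^{2\alpha+\beta}$ and $2\alpha+\beta>1$.

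Squaring and summing these increments along the partitions $\partition_n$ from Corollary \ref{cor:2varXW}, the cross terms, the squared second-level contribution, and the squared remainder all yield sums whose exponents in $|\partition_n|$ exceed $1$ since $\alpha>1/3$, so they vanish in the limit. Writing the leading contribution as
\begin{align*}
    \sum_{[u,v]\in\partition_n,\,v\le s}\bigl\langle G_\omega(u)\,G_\omega(u)^T,(\Wtaunull(\omega)(v)-\Wtaunull(\omega)(u))^{\otimes 2}\bigr\rangle,
\end{align*}
I recognize a Riemann-type pairing of the continuous matrix-valued function $r\mapsto G_\omega(r)\,G_\omega(r)^T$ (continuous since $\nabla_y\vb$ is continuous in $t$ and dominated convergence applies) against the measures $\mu_n^{\Wtaunull(\omega)}$ of Definition \ref{defi:qv}. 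By Lemma \ref{lem:2varW}, off a $\prob$-null set $\mu_n^{\Wtaunull(\omega)}\rightharpoonup\diag(\one_d)\,\dd\lambda$ on $[0,\tau(0)]$; since the limit has no atom at $s$, passing to the limit yields $\int_0^s\tr(G_\omega(r)\,G_\omega(r)^T)\,\dd r=\int_0^s|G_\omega(r)|^2\,\dd r$. By Definition \ref{defi:qv} this identifies the quadratic variation of $I(\cdot,\omega)$ at $s$ with the right-hand side of \eqref{lem:qvCondExp:eq}.

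The main obstacle is the Fubini exchange: one needs a remainder estimate from Theorem \ref{theo:RI} that is uniform in $(y,\varphi,\z)\in S\times C^\alpha$ for each fixed $\omega$, requiring the uniform-in-$\varphi$ a-priori bounds of Theorem \ref{theo:sol2cSDE}(ii) together with the uniform-in-$\wb$ regularity of $\vb$ from the good version. A secondary technical point is the coordination of the $\prob$-null sets coming from Lemma \ref{lem:condDist}, Corollary \ref{cor:2varXW}, and Lemma \ref{lem:2varW} so that \eqref{lem:qvCondExp:eq} holds simultaneously outside a single exceptional null set.
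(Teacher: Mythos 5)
Your proposal is correct and follows essentially the same route as the paper's proof: reduce via the conditional distributions of Lemma \ref{lem:condDist} so that the pathwise rough integrals are all taken against the fixed rough path $\Wtaunullb(\omega)$, use the uniform (in $(y,\varphi,\z)\in S\times C^\alpha$) remainder estimates from Theorem \ref{theo:RI} together with the a-priori bounds of Theorem \ref{theo:sol2cSDE} and the good-version regularity to control the integrated local expansion, and then square and sum the increments so that only the first-order term survives and pairs with $\mu_n^{W}\rightharpoonup\lambda\,\diag(\one_d)$ from Lemma \ref{lem:2varW}. The only point you gloss over that the paper treats at some length is the Borel measurability of $(y,\varphi,\wb,\z)\mapsto\int_0^t\la\nabla_y\vb,f(y)\ra\dd\wb$ (obtained there as a pointwise limit of measurable Riemann sums via a Carath\'eodory argument), but this is routine given the limits you already invoke.
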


\begin{proof}
	We first show that all integrals in \eqref{lem:qvCondExp:eq} are well defined and then prove equality \eqref{lem:qvCondExp:eq}. For convenience we denote the good version $\bar v$, by $v$, too and write $Y = Y^\phi$. Additionally we now trust the reader to identify the Borel $\sigma$-algebras and simply call a map Borel measurable.
	
	Let $(r,s)\in\Delta_{\tau(0)}$. By the regularity assumptions on the \hyperlink{goodvaluefct}{good version $v$}, $\wb\mapsto v(s,y,\wb,\z)$ is Borel measurable for every $(s,y,\z)\in [0,T]\times\R^n\times D([0,T],\R^d)$ and $\nabla_y v(\cdot,\wb,\cdot)$ is continuous. Since $\nabla_y v$ is a pathwise limit by Definition \ref{defi:nabla}, $\wb\mapsto \nabla_y v(s,y,\wb,\z)$ is measurable in the same way. Thus $\nabla_yv$ is a Carath\'eodory function. Moreover $\nabla_y v$ restricted to continuous paths is continuous, too, and $[0,T]\times\R^n\times C([0,T],\R^d)$ is a separable, metrizable space. Therefore $\nabla_y v\colon [0,T]\times\R^n\times\RP_{\tau(0)}\times C([0,T],\R^d)\to \R^n$ is Borel measurable by \parencite[Lemma III.14]{castaingConvexAnalysisMeasurable1977} and so is $\nabla_y v$ considered as a causal map in $\y\in C([0,T],\R^n)$ depending only on the current value $\nabla_y v(s, \y(s),\wb,\z)$. The same argument holds for $\nabla^2_y v$. Considering also $f$ as a causal map depending only on the current value $\y\mapsto f(\y(s))$, the same follows for
	\begin{align}\label{lem:qvCondExp:eq:3}
		\y\mapsto f(\y(s)), \quad y\mapsto \nabla_yf(\y(s))\cdot f(\y(s)).
	\end{align}
    Together with the uniform bounds of the \hyperlink{goodvaluefct}{good version} and $f$, we deduced that the RHS of \eqref{lem:qvCondExp:eq} is well-defined. Further it follows that
	\begin{align}\label{lem:qvCondExp:eq:4}
		\begin{split}
		(\y,\wb,\z)\mapsto \sum_{[r,s]\in\tilde{\partition}}& \la \nabla_y v(r,\y(r),\wb,\z), f(\y(r))(\w(s)-\w(r))\ra  + \la \nabla^2_y v(r,\y(r),\wb,\z), f(\y(r))^{\otimes 2}\wt(r,s)\ra \\
		&\quad + \la\nabla_y v(r,\y(r),\wb,\z), \nabla f(\y(r))\otimes f(\y(r))\wt(r,s)\ra 
		\end{split}
	\end{align}
	is Borel measurable for any partition $\tilde{\partition}$ of $[0,\tau(0)]$. By the regularity assumptions on $v$, the last expression converges for $(\y,\wb,\varphi)\in S$, (so $y$ RDE solution on $[0,\tau(0)]$) to a rough integral by Theorem \ref{theo:funcIto4SolXW}. This implies that 
	\begin{align*}
		\mathcal I\colon [0,\tau(0)]\times  U_{\tau(0)} &\to \R\\
		(s,\y,\varphi,\wb,\z)&\mapsto\int_0^s\la\nabla_yv(\id,\y,\wb,\z), f(\y)\ra\dd\wb \ \one_{S\times C^\alpha}
	\end{align*} 
	is the well-defined pointwise limit of \eqref{lem:qvCondExp:eq:4} (summing over $\tilde{\partition}\cap[0,s]$) as $\vert \tilde{\partition}\vert\to 0$. Therefore $\mathcal I(t,\cdot)$ is Borel measurable for every $s\in[0,\tau(0)]$, too.
	It remains to show that $\mathcal I(t,\cdot)$ is integrable with respect to $\cP{Y^\phi,\phi,\Wtaunullb, \Wtau}{\F_{\tau(0)}}$. In order to do so we make use of the strong estimates rough path theory provides. Thereby we disregard deterministic constants that depend on the coefficients $b,f$ from \eqref{cSDE} and the regularity of $v$. Also we disregard dependencies of the form $t^\alpha\le \tau(0)^\alpha\le T^\alpha$. By Theorem \ref{theo:RI} combined with Leibniz type estimates \eqref{theo:funcIto4SolXW:eq1}, \eqref{theo:funcIto4SolXW:eq2} for the $\w$-controlled product $\la\nabla_yv(\id,y, \wb,\z), f(y)\ra$, it holds for $(\y,\varphi,\wb,\z)\in S\times C^\alpha$ that
	\begin{align}
		&\vert\mathcal I(s,\y,\varphi,\wb,\z)\vert\notag\\
		\begin{split}\label{lem:qvCondExp:eq:5}
		&\lesssim \vert\w\vert_\alpha \lb 1 + \vert  R^{\la\nabla_yv(\id,\y, \wb,\z), f(\y)\ra,\w}\vert_{\alpha+\alpha^2} + \vert\la\nabla_yv(\id,\y,\wb,\z), f(\y)\ra\vert_\alpha \rb\\
		&\quad + \vert\wt\vert_{2\alpha} \lb 1 + \vert\la\nabla^2_y v(\id,\y,\wb,\z), f^{\otimes 2}(\y)\ra + \la\nabla_y v(\id,\y,\wb,\z), \nabla_yf(\y)\cdot f(\y)\ra\vert_\alpha\rb
		\end{split}\\
		&\lesssim \vert\w\vert_\alpha\Big( 1 + \vert R^{\nabla_{(y,\z)}v(\id,\y,\wb,\z),(\y,\z)}\vert_{\alpha + \alpha^2} + \vert \y\vert_\alpha\vert f(\y)\vert_\alpha + \vert R^{\y,\w}\vert_{2\alpha} + \vert R^{f(\y), \w}\vert_{2\alpha}\notag\\
		&\qquad\qquad\ + \vert\nabla_{(y,\z)} v(\id,\y,\wb,\z)\vert_\alpha + \vert f(\y)\vert_\alpha\Big)\notag\\
		&\quad + \vert\wt\vert_{2\alpha} \lb 1+ \vert f(\y)\vert_\alpha + \vert \nabla_{(y,\z)}^2 v(\id,\y,\wb,\z)\vert_\alpha  + \vert\nabla_yf(\y)f(\y)\vert_\alpha  + \vert\nabla_{(y,\z)} v(\id,\y,\wb,\z)\vert_\alpha\rb.\notag
	\end{align}
	Applying Remark \ref{rem:nablaFcontrolled} for $R^{\nabla_{(y,\z)}v(\id,\y,\wb,\z),(\y,\z)}$, $\nabla_{(y,\z)} v(\id,\y,\wb,\z)$ and $\nabla_{(y,\z)}^2 v(\id,\y,\wb,\z)$; as well as the estimates \eqref{estimate:sigma(Y)} for $f(\y)$, $\nabla_yf(\y)\cdot f(\y)$ and $R^{f(\y), \w}$, the estimate simplifies to 
	\begin{align*}
		&\vert\mathcal I(t,\y,\varphi,\wb,\z)\vert\\
		&\lesssim \vert\w\vert_\alpha \lb 1+\vert (\y,\z)\vert_\alpha + \vert \y\vert_\alpha^2 + \vert R^{\y,\w}\vert_{2\alpha} + \vert \y\vert_\alpha\rb +\vert\wt\vert_{2\alpha}\lb 1+\vert \y\vert_\alpha + \vert (\y,\z)\vert_\alpha \rb\\
		&\lesssim (\vert\w\vert_\alpha + \vert\wt\vert_{2\alpha})\lb 1+ (\vert \y\vert_\alpha \vee \vert \y\vert_\alpha^2) + \vert R^{\y,\w}\vert_{2\alpha} +\vert \z\vert_\alpha\rb.
	\end{align*}
	It holds $\vert\w\vert_\alpha + \vert\wt\vert_{2\alpha} \le 2(1+\vert\wb\vert_\alpha)^2$. Moreover since $(\y,\varphi,\wb)\in S$ the apriori bounds from Theorem \ref{theo:sol2cSDE} part $i)$ for solutions to rough differential equations apply. 
	\begin{align}\label{lem:qvCondExp:eq:aprorix}
		\vert \y\vert_\alpha \lesssim (\vert\wb\vert_\alpha+ T^{1-2\alpha})\vee(\vert\wb\vert_\alpha + T^{1-2\alpha})^{1/\alpha}
	\end{align}
	and
	\begin{align*}
		\vert R^{\y,\w}\vert_{2\alpha}\lesssim (1+ \vert\wb\vert_\alpha + T^{1-2\alpha})^2\vee (1+\vert\wb\vert_\alpha+ T^{1-2\alpha})^{2/\alpha}.
	\end{align*}
	Hence it follows for $t\in[0, \tau(0)]$ that
	\begin{align*}
		\int_{S\times C^\alpha} &\left\vert \mathcal I(t,\y,\varphi,\wb,\z)\right\vert \cP{Y,\phi,\Wtaunullb,\Wtau}{\F_{\tau(0)}}(\dd(\y,\varphi,\wb,\z),\omega)\\
		&\le\int_{S\times C^\alpha}  C(\vert\wb\vert_\alpha)(C(\vert\wb\vert_\alpha) + \vert \z\vert_\alpha) \cP{Y,\phi,\Wtaunullb,\Wtau)}{\F_{\tau(0)}}(\dd(\y,\varphi,\wb,\z),\omega),
	\end{align*}
	where $C(\vert\wb\vert_\alpha)\varpropto (1+\vert\wb\vert_\alpha)^q$ for some $q\in[2,\infty)$. By Lemma \ref{lem:condDist} part $iii)$, it holds that
	\begin{align*}
		&\int  C(\vert\wb\vert_\alpha)(C(\vert\wb\vert_\alpha) + \vert \z\vert_\alpha) \cP{Y,\phi,\Wtaunullb,\Wtau}{\F_{\tau(0)}}(\dd(\y,\varphi,\wb,\z),\omega)\\
		&= C(\vert\Wtaunullb(\omega)\vert_\alpha)^2 + C(\vert\Wtaunullb(\omega)\vert_\alpha)\expE \vert \Wtau\vert_\alpha.
	\end{align*}
	The last constant is finite on $\Omega\setminus N$. Indeed $\vert \Wtaunull(\omega)\vert_\alpha<\infty$ by definition of $ N$ in Corollary \ref{cor:2varXW}, so also $C(\vert\Wtaunullb(\omega)\vert_\alpha)<\infty$. It follows from the discussion in Example \ref{exmp:lift} together with $\tau$ continuously differentiable that $\vert\Wtau\vert_\alpha$ is integrable.
	
 	We next show the identity \eqref{lem:qvCondExp:eq}. For $\omega\in\Omega$ and $(u,v)\in\Delta_{\tau(0)}$ Lemma \ref{lem:condDist} part $ii)$, yields that
 	\begin{align*}
 		&\int \la\nabla_yv(u,\y,\wb,\z), f(\y)(\w(v)-\w(u))\ra \cP{Y,\phi,\Wtaunullb,\Wtau}{\F_{\tau(0)}}(\dd(\y,\varphi,\wb,\z), \omega)\\
 		&=\int \la\nabla_yv(u,\y,\Wtaunullb(\omega),\z), f(\y)\ra \cP{Y,\phi, \Wtau}{\F_{\tau(0)}}(\dd(\y,\varphi,\z),\omega)(W(\omega)(v)-W(\omega)(u)).
 	\end{align*}
	We abbreviate $\la\nabla_y v, f\ra(u)(\y,\wb,\z):= \la\nabla_yv(u, \y,\wb, \z), f(\y(u))\ra$ etc., and restrict to $\Omega\setminus N$ and $\vert v-u\vert\le 1$. It holds that
	\begin{align*}
		&\lb \int \mathcal I(v,\cdot)-\mathcal I(u,\cdot)\cP{Y,\phi,\Wtaunullb,\Wtau}{\F_{\tau(0)}}\rb^{\otimes 2} - \la\lb\int \la\nabla_y v, f\ra(u) \cP{Y,\phi, \Wtau}{\F_{\tau(0)}}\rb^{\otimes 2},(W(v)-W(u))^{\otimes 2}\ra\\
		&= \lb \int_{S\times C^\alpha}R^{\mathcal I}(u,v) \cP{Y,\phi,\Wtaunullb,\Wtau}{\F_{\tau(0)}}\rb^2\\
		&\quad  + 2 \lb \int_{S\times C^\alpha}R^{\mathcal I}(u,v) \cP{Y,\phi,\Wtaunullb,\Wtau}{\F_{\tau(0)}}\rb\lb\int \la\nabla_y v, f\ra(u)  \cP{Y,\phi, \Wtau}{\F_{\tau(0)}}\rb(W(v)-W(u)).
	\end{align*}
	By Theorem \ref{theo:RI}, it holds for $(\y,\varphi,\wb,\z)\in S\times C^\alpha$ that
	\begin{align*}
		&\left\vert R^{\mathcal I(\cdot, \y,\varphi,\wb,\z)}(u,v)\right\vert\\
		&\lesssim (\vert\w\vert_\alpha\vert  R^{\la\nabla_yv, f\ra, \w}\vert_{\alpha+\alpha^2} + \vert\wt\vert_{2\alpha}\vert\la \nabla^2_y v, f^{\otimes 2}\ra + \la\nabla_y v, \nabla_yf\cdot f\ra\vert_\alpha)\vert u-v\vert^{2\alpha+\alpha^2}\\
		&\quad + \vert\la \nabla^2_y v, f^{\otimes 2}\ra + \la\nabla_y v, \nabla_yf\cdot f\ra\vert_\infty\vert\wt\vert_{2\alpha}\vert v-u\vert^{2\alpha}\\
		&\lesssim (\vert\w\vert_\alpha + \vert \wt\vert_{2\alpha})(\vert  R^{\la\nabla_yv, f\ra, \w}\vert_{\alpha+\alpha^2} +\vert\la \nabla^2_y v, f^{\otimes 2}\ra + \la\nabla_y v, \nabla_yf\cdot f\ra\vert_\alpha) \vert v-u\vert^{2\alpha}.
	\end{align*} 
	Now the last constant can be estimated in the same manner as \eqref{lem:qvCondExp:eq:5}. Consequently we get that
	\begin{align*}
		\left\vert\int_{S\times C^\alpha}R^{\mathcal I}(u,v) \cP{Y,\phi,\Wtaunullb,\Wtau}{\F_{\tau(0)}}\right\vert \lesssim C(\Wtaunullb,\expE \vert\Wtau\vert_\alpha)\vert v-u\vert^{2\alpha}
	\end{align*}
	for a constant $C(\Wtaunullb,\expE \vert\Wtau\vert_\alpha)>0$ that is finite on $\Omega\setminus N$. Therefore it follows on $\Omega\setminus N$ as $n\to\infty$ that
	\begin{align*}
	\sum_{[u,v]\in\partition_n} \lb \int_{S\times C^\alpha}R^{\mathcal I}(u,v) \cP{Y,\phi,\Wtaunullb,\Wtau}{\F_{\tau(0)}}\rb^2
	\lesssim C(\Wtaunullb,\expE \vert\Wtau\vert_\alpha)^2 \sum_{[u,v]\in\partition_n} \vert v-u\vert^{4\alpha}\to 0
	\end{align*}
	since in the regime of the \hyperlink{theo:roughfuncInt}{rough functional It\^o formula} $4\alpha\ge 3\alpha>1$. Similarly,
	\begin{align*}
		&\sum_{[u,v]\in\partition_n} \left\vert \int_{S\times C^\alpha}R^{\mathcal I}(u,v) \cP{Y,\phi,\Wtaunullb,\Wtau}{\F_{\tau(0)}}\right\vert\left\vert\int \la\nabla_y v, f\ra(u)  \cP{Y,\phi, \Wtau}{\F_{\tau(0)}}\right\vert \vert W(v)-W(u)\vert\\
		&\lesssim C(\Wtaunullb,\expE \vert\Wtau\vert_\alpha)\vert W\vert_\alpha \sum_{[u,v]\in\partition_n} \vert v-u\vert^{3\alpha}\to 0.
	\end{align*}
	Finally since $W(\omega)$ has for $\omega\in\Omega\setminus N$ finite quadratic variation along $\partition$ with $[W(\omega)]=\id\ \diag(\one_d)$ and
	\begin{align*}
		[0,\tau(0)]\ni u\mapsto \int \la\nabla_yv(u,\y,\Wtaunullb(\omega),\z), f(\y)\ra \cP{Y,\phi, \Wtau}{\F_{\tau(0)}}(\dd(\y,\varphi,\z), \omega)\in\R^d
	\end{align*}
	is continuous, it follows on $\Omega\setminus N$ from Definition \ref{defi:qv} that 
	\begin{align*}
 		&\lim_{n\to\infty}\sum_{\substack{[u,v]\in\partition_n,\\ u\le\tau(0)}}\la\lb\int \la\nabla_y v, f\ra(u) \cP{Y,\phi, \Wtau}{\F_{\tau(0)}}\rb^{\otimes 2},(W(v)-W(u))^{\otimes 2}\ra \\
        &= \int_0^{\tau(0)} \la\lb\int \la\nabla_y v, f\ra \cP{Y,\phi, \Wtau}{\F_{\tau(0)}}\rb^{\otimes 2}, \diag(\one_d)\ra\dd\lambda = \int_0^{\tau(0)} \left\vert \int \la\nabla_y v,f\ra \cP{Y^\phi,\phi,\Wtau}{\F_{\tau(0)}}\right\vert^2 \dd\lambda.
	\end{align*}
	The same arguments holds when integrating only up to $t\in[0,\tau(0)]$. Together we showed that on $\Omega\setminus N$
	\begin{align*}
	 \lim_{n\to\infty}\sum_{\substack{[u,v]\in\partition_n,\\ u\le t}} \lb \int \mathcal I(v,\cdot)-\mathcal I(u,\cdot)\cP{Y,\phi,\Wtaunullb,\Wtau}{\F_{\tau(0)}}\rb^2 \to \int_0^s \left\vert \int \la\nabla_y v,f\ra \cP{Y^\phi,\phi,\Wtau}{\F_{\tau(0)}}\right\vert^2 \dd\lambda.
	\end{align*}
 	By Theorem \ref{theo:weakconvergence} this proves \eqref{lem:qvCondExp:eq}.
\end{proof}

To deduce that Lemma \ref{lem:qvCondExp} also holds true for a solution $Y^{t,\phi}\in C([t,T],\R^n)$ of \eqref{cSDE} started at $t\in[0,T)$ and windows $[t,\tau(t)]$ we consider the following transformation: 
Let $S^t$ be as defined in \eqref{solutionsset} except that we consider  the rough differential equation over $[t,\tau(t)]$, with some fixed initial condition at time $t$; $\Wtaut$ denote $W\circ\tau - W(\tau(t))\in C([t,T],\R^d)$, compare \eqref{Wtau} and similar to \eqref{Ctau}, $U_{\tau(t)}=C([t,T],\R^d)\times L^0([t,T],\R^m)\times\RP_{\tau(t)}\times C([t,T],\R^d)$. Then it is clear that analogues of Lemma \ref{lem:solutionset} and Lemma \ref{lem:condDist} hold, so that we find conditional distributions of $(Y^{t,\phi},\Wtaunulltb,\Wtaut)$ (resp. $(Y^{t,\phi},\phi,\Wtaut)$) given $\F_{\tau(t)}$ that concentrate surely on $S^t\in\borel_{\vert\cdot\vert_\infty}(C([t,T],\R^d))\otimes\borel_{d_{KF}}(L^0([t,T],\R^m))\otimes \mathfrak{C}_{\tau(t)}$.
Set further
\begin{align}
	\begin{split}\label{cor:qvCondExp:trafo:1}
		\vb^t\colon [t,T]\times C([t,T],\R^d)\times \RP_{\tau(t)}\times D([t,T],\R^d)&\to \R,\\
		(s,\y,\wb,\z)&\mapsto \vb(s,\y(s),\wb_{\mid [0,\tau(0)]},\wb\sqcup_t\z),
	\end{split}
\end{align}
where the concatenation $\sqcup_t$ at $t\in[0,T]$ is defined by
\begin{align}\label{cor:qvCondExp:concatenration}
	\begin{split}
	\sqcup_t\colon \RP_{\tau(t)}\times D([t,T],\R^d)&\to  D([0,T],\R^d)\\
	(\w, \z) &\mapsto \wtau(\cdot\wedge t) + \z(\cdot\vee t).
	\end{split}
\end{align}
In particular it holds for $s\in[0,T]$ that
\begin{align}\label{cor:qvCondExp:trafo:2}
	(\Wtaunulltb\sqcup_t\Wtaut)(s) = \Wtaunullt(\tau(s\wedge t)) - \Wtaunullt(\tau(0)) + \Wtaut(s\vee t) = \Wtau(s).
\end{align}

\begin{cor}\label{cor:qvCondExp} 
	Under the assumptions of Lemma \ref{lem:qvCondExp} and writing $Y^{t,\phi}$ for the solution started instead at $t$, there exists for every $t\in[0,T)$, conditional distributions $\cP{Y^{t,\phi},\Wtaunulltb,\Wtaut}{\F_{\tau(t)}}$ (resp. $\cP{Y^{t,\phi},\phi,\Wtaut}{\F_{\tau(t)}}$) of $(Y^{t,\phi},\Wtaunulltb,\Wtaut)$ (resp. $(Y^{t,\phi},\phi,\Wtaut)$) given $\F_{\tau(t)}$, such that for every $\omega\in\Omega\setminus N$, $s\in[t,\tau(t)]$  it holds that
	\begin{align}\label{cor:qvCondExp:eq}
		\begin{split}
			&\left[ \int_{S^t\times C^\alpha} \int_t^\cdot \la\nabla_y \vb^t(\id,\y,\wb,\z), f(\y)\ra \dd\wb\ \cP{Y^{t,\phi},\Wtaunulltb,\Wtaut}{\F_{\tau(t)}}(\dd(\y,\varphi,\wb,\z), \omega)\right](s)\\
			&\quad = \int_t^s \left\vert \int \la\nabla_y \vb^t(\id,\y,\wb,\z), f(\y)\ra \cP{Y^{t,\phi},\phi,\Wtaut}{\F_{\tau(0)}}(\dd(\y,\varphi,\z), \omega)\right\vert^2 \dd\lambda_{\big\vert \wb = \Wtaunulltb(\omega)}.
		\end{split}
	\end{align}
\end{cor}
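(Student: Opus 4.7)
The plan is to reduce Corollary \ref{cor:qvCondExp} to Lemma \ref{lem:qvCondExp} by a time translation. First, I would construct the conditional distributions $\cP{Y^{t,\phi},\Wtaunulltb,\Wtaut}{\F_{\tau(t)}}$ and $\cP{Y^{t,\phi},\phi,\Wtaut}{\F_{\tau(t)}}$ by repeating the proof of Lemma \ref{lem:condDist} verbatim on the shifted interval $[t,T]$: the solution set $S^t$ is measurable by the argument of Lemma \ref{lem:solutionset} applied on $[t, \tau(t)]$, and the three key properties (concentration on $S^t \times C^\alpha$, disintegration into a product, and independence of $\Wtaut$ from $\F_{\tau(t)}$) carry over unchanged because $\Wtaut$ is built from the future increments of $W$ after $\tau(t)$, which are independent of $\F_{\tau(t)}$.

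Next, I would check that the transformed functional
\[
\vb^t(s, y, \wb, \z) = \vb\bigl(s, y(s), \wb_{\mid [0,\tau(0)]}, \wb \sqcup_t \z\bigr)
\]
inherits all the regularity of a \hyperlink{goodvaluefct}{good version} on the shifted interval $[t,T]$. Dependence on $y$ enters only through the evaluation $y(s)$, so $\nabla_y^k \vb^t$ are inherited pointwise from $\nabla_y^k \vb$ with the same uniform bounds and Lipschitz constants. Dependence on $\z$ enters through the concatenation $\sqcup_t$, which by \eqref{cor:qvCondExp:trafo:2} is constructed precisely so that $\Wtaunulltb \sqcup_t \Wtaut = \Wtau$; since the concatenation leaves the past $\wtau(\cdot \wedge t)$ fixed and appends $\z(\cdot \vee t)$, causal differentiation of $\vb^t$ in $\z$ at times $s \geq t$ corresponds to causal differentiation of $\vb$ in its last argument at the same times, with identical uniform $\mathbb{C}_b^{1,4}$-bounds and Lipschitz estimates in $\wb \in \RP_{\tau(t)}$.

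With these two ingredients in place, I would invoke Lemma \ref{lem:qvCondExp} applied to the functional $\vb^t$, the solution $Y^{t,\phi}$, and the conditional distributions just constructed. This directly yields \eqref{cor:qvCondExp:eq} on the shifted interval $[t, \tau(t)]$, outside the same null set $N$ of Corollary \ref{cor:2varXW} (since the quadratic variation of $W$ along $\partition$ is the only probabilistic input, and it is unaffected by the time shift).

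The main obstacle I anticipate is the bookkeeping around the causal calculus of $\vb^t$: one has to verify rigorously that differentiating through $\sqcup_t$ transports both the causal time derivative $D$ and the causal space derivatives $\nabla_\z^k$ in the required $\mathbb{C}_b^{1,4}$ class, and that the Lipschitz constants remain uniform in $\wb$. Once this chain rule is established, every estimate in the proof of Lemma \ref{lem:qvCondExp}, including the a priori bounds \eqref{lem:qvCondExp:eq:aprorix} on solutions to the rough differential equation and the control of the remainder of the rough integral, applies verbatim on $[t,\tau(t)]$.
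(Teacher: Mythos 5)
Your proposal is correct and follows essentially the same route as the paper: construct the shifted conditional distributions by rerunning Lemma \ref{lem:condDist} on $[t,T]$, verify that the causal calculus and Lipschitz bounds of the good version transfer to $\vb^t$ through the concatenation $\sqcup_t$ (the paper records this via the identities $\wb\sqcup_t(\z_s + h\one_{[s,T]}) = \wb\sqcup_t\z + h\one_{[s,T]}$, $\wb\sqcup_t \z_s = (\wb\sqcup_t \z)_s$ and the isometry $\vert\wb\sqcup_t \z-\wb\sqcup_t\tilde{\z}\vert_\infty = \vert \z-\tilde{\z}\vert_\infty$), and then apply Lemma \ref{lem:qvCondExp} to $\vb^t$ and $Y^{t,\phi}$. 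The "chain rule" obstacle you flag is exactly the point the paper resolves with those identities, so your plan is complete.
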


\begin{proof}
	Clearly $\sqcup_t$ and the restriction $\w\mapsto\w_{\mid [0,\tau(0)]}$ are continuous with respect to $\vert\cdot\vert_\infty$. Recalling the properties of \hyperlink{goodvaluefct}{good version of $v$}, $\vb^t$ is Borel measurable, too. Moreover the differentiablity assumptions carry over to $\vb^t$, noting that for $s\in[t,T]$
	\begin{align*}
		\wb\sqcup_t(\z_s + h\one_{[s,T]}) = \wb\sqcup_t\z + h\one_{[s,T]},\quad \wb\sqcup_t \z_s = (\wb\sqcup_t \z)_s.
	\end{align*}
	Consequently it follows on $[t,T]$ for the causal derivatives that $\nabla_y\vb^t(s,\y,\wb,\z) = \nabla_y\vb(s,\y(s),\wb_{\mid [0,\tau(0)]}, \wb\sqcup_t \z)$ and $D\vb^t(s,\y,\wb,\z)=Dv(s,\y(s),\wb_{\mid [0,\tau(0)]}, \wb\sqcup_t\z)$ etc. Noting that $\vert\wb\sqcup_t \z-\wb\sqcup_t\tilde{\z}\vert_\infty = \vert z-\tilde{\z}\vert_\infty$, also the Lipschitz conditions on the causal derivatives carry over from $\vb$ to $\vb^t$. Applying Lemma \ref{lem:qvCondExp} with $\vb^t$ and $Y^{t,\phi}$ proves the claim.
\end{proof}

\subsubsection{Good Version of the Value Function Solves HJB}

\begin{theo}\label{theo:regularvaluesolvesHJB}
	Assume that the value function $v$ defined in \eqref{valuefct} has a \hyperlink{goodvaluefct}{good version} $\vb$. Then $u:=\vb\circ\lift$ solves \eqref{hjb} and $u(\cdot, \Wtaunull, \Wtau)$ is a modification of $v$.
\end{theo}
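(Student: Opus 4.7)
The plan is to combine the \hyperlink{ddp}{dynamic programming principle} with Theorem~\ref{theo:funcIto4SolXW} applied to the process $\vb(\cdot, Y^{t,y,\phi}, \Wtaunullb, \Wtau)$, and then exploit Corollary~\ref{cor:qvCondExp} to isolate the transport equation in~\eqref{hjb}. The modification claim is immediate: on the complement of the null set fixed in Example~\ref{exmp:lift}, $\lift(\Wtaunull) = \Wtaunullb$, so $u(t,y,\Wtaunull,\Wtau) = \vb(t,y,\Wtaunullb,\Wtau)$ almost surely, and the right-hand side represents $v(t,y)$. The regularity conditions $i)$--$iv)$ of a good version transfer to $u(\cdot,\wtaunull,\cdot) = \vb(\cdot,\lift(\wtaunull),\cdot)$ for $\prob^W$-a.e.\ $\w$, since $\lift(\wtaunull)$ appears only as a rough-path parameter. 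The terminal condition $u(T,y,\wtaunull,\wtau) = g(y)$ follows from $v(T,y)=g(y)$ a.s.\ combined with a countable dense-$y$ union of null sets and continuity of $\vb$ in $y$.

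For the PDE part, fix $(t,y) \in [0,T)\times\R^n$ and $\phi\in\adm$. On $\Omega\setminus N$, Theorem~\ref{theo:funcIto4SolXW} applied to $\vb(\cdot,\cdot,\Wtaunullb,\cdot)$ and the pair $(Y^{t,y,\phi},\Wtau)$ on $[t,\tau(t)]$ gives
\begin{align*}
\vb(s,Y(s),\Wtaunullb,\Wtau) - \vb(t,y,\Wtaunullb,\Wtau)
&= \int_t^s \bigl[D\vb + H(\id,Y,\phi,\Wtaunullb,\Wtau)\bigr]\,d\lambda \\
&\quad + \int_t^s \la \nabla_y\vb, f(Y)\ra\,d\Wb + \int_t^s \la \nabla_\z\vb, d\Wtau\ra.
\end{align*}
Taking $\E[\cdot\mid\F_{\tau(t)}]$, the $d\Wtau$-It\^o integral vanishes (it is an $\F_\tau$-martingale starting at $t$). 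By the \hyperlink{ddp}{dynamic programming principle} and Remark~\ref{rem:valuefctsupermartingale}, the conditional expectation of the left-hand side is nonincreasing in $s$ on $[t,T]$, hence of bounded variation, with equality along a maximizing sequence $(\phi_n)\subset\adm$ provided by Lemma~\ref{lem:updirec}.

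The hard part is extracting the transport equation $\la \nabla_y u, f\ra = 0$. Since the drift term is BV in $s$, so is the conditional expectation of the rough integral $\int_t^\cdot \la \nabla_y\vb, f(Y)\ra\,d\Wb$; in particular, its pathwise quadratic variation along the partitions of Corollary~\ref{cor:2varXW} vanishes. Corollary~\ref{cor:qvCondExp} identifies this quadratic variation as
\begin{align*}
\int_t^s \Bigl\vert \E\bigl[\la \nabla_y\vb(\id,Y,\Wtaunullb,\Wtau), f(Y)\ra \bigm\vert \F_{\tau(t)}\bigr]\Bigr\vert^2\,d\lambda,
\end{align*}
which therefore equals zero for every $s\in[t,\tau(t)]$. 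Lebesgue differentiation and right-continuity at $s=t$ give $\E[\la \nabla_y\vb(t,y,\Wtaunullb,\Wtau), f(y)\ra \mid \F_{\tau(t)}] = 0$ almost surely. Causality of $\vb$ in its $\z$-argument makes the integrand $\F_{\tau(t)}$-measurable, so the conditional expectation is redundant and $\la \nabla_y\vb(t,y,\Wtaunullb,\Wtau), f(y)\ra = 0$ a.s. Applying this at a countable dense set of $(t,y)$, taking the union of exceptional null sets, and extending by continuity in $(t,y)$ promotes the statement to $\la \nabla_y u(t,y,\wtaunull,\wtau), f(y)\ra = 0$ for every $(t,y)$ and $\prob^W$-a.e.\ $\w$. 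This is the transport equation in~\eqref{hjb} and will be the technical heart of the proof, as it requires the careful interplay between the pathwise quadratic variation of Section~\ref{sec:sec:qv}, the conditional distribution framework of Section~\ref{sec:sec:sec:condDist}, and the causality of $\vb$.

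With the transport equation in hand, the argument used in the \hyperlink{theo:verification}{Verification theorem} forces the rough integral itself to vanish identically. Reinserting this, the conditional-expectation form of the It\^o expansion becomes
\begin{align*}
\E\Bigl[\int_t^s \bigl(D\vb + H(\id,Y,\phi,\Wtaunullb,\Wtau)\bigr)\,d\lambda \,\Bigm\vert\, \F_{\tau(t)}\Bigr] \le 0,
\end{align*}
with equality along the maximizing $(\phi_n)$. Dividing by $s-t$ and letting $s\downarrow t$ via continuity of the integrand and dominated convergence yields $Du + H(t,y,\phi(t),\wtaunull,\wtau)\le 0$ pointwise, with equality attained in the limit along the maximizing sequence. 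Selecting $\phi\in\adm$ that equals an arbitrary constant $\varphi\in\R^m$ on $[t,t+\eps]$ realizes every pointwise Hamiltonian value, and optimizing over $\varphi$ delivers the first equation of~\eqref{hjb}. A final null-set argument identical to the one used for the transport equation extends this to hold for $\prob^W$-a.e.\ $\w$ and every $(t,y)$.
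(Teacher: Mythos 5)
Your proposal follows essentially the same route as the paper's proof: functional It\^o formula plus conditional expectation, the supermartingale/DPP monotonicity forcing bounded variation, Corollary~\ref{cor:qvCondExp} to identify the vanishing quadratic variation and extract the transport equation via causality, the verification-theorem argument to annihilate the rough integral, and then constant controls for one inequality and maximizing sequences from Lemma~\ref{lem:updirec} for the other. The only place you compress real work is the ``equality along the maximizing sequence'' step, where the paper must control the error $\rho(n,m)$ from replacing $Y(u)$ by $y^0$ (using the uniform Lipschitz bounds of the good version and the rough-path a priori estimates) before interchanging the $n$- and $m$-limits; your outline correctly identifies the ingredients but leaves that interchange implicit.
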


\begin{proof} With some abuse of notation let $v$ denote also the good version. 
	We first prove the second equation in \eqref{hjb}. 
	Let $y\in\R^n$, $\phi\in\adm$, $Y$ denote the solution to \eqref{cSDE} started in $t\in[0,T)$.
	We may apply Theorem \ref{theo:funcIto4SolXW} to $v$ in the exact same manner as in the beginning of the proof of the \hyperlink{theo:verification}{Verification Theorem~\ref{theo:verification}}. Taking the conditional expectation with respect to $\F_{\tau(t)}$, it follows for $s\in[t,T)$ that
	\begin{align}
	&\expE(v(s,Y(s),\Wtaunullb,\Wtau) - v(t,y,\Wtaunullb,\Wtau)\mid\F_{\tau(t)})\notag\\ 
	&=\expE\lb \int_t^s Dv(\id,Y,\Wtaunullb,\Wtau) + H(\id,Y,\phi,\Wtaunull,\Wtau) \dd\lambda\ \mid\F_{\tau(t)}\rb\label{theo:regularvaluesolvesHJB:eq:1}\\
	&\quad + \expE\lb\int_t^s \la \nabla_y v(s,Y,\Wtaunullb,\Wtau), f(Y)\ra \dd\Wb\mid\F_{\tau(t)}\rb\notag\\
	&\quad + \expE\lb\int_t^s \la\nabla_z v(\id,Y,\Wtaunullb,\Wtau),\dd \Wtau\ra\mid\F_{\tau(t)}\rb.\label{theo:regularvaluesolvesHJB:eq:2}\\
	\begin{split}\label{theo:regularvaluesolvesHJB:eq:3}
	&=\int_t^s \expE(Dv(\id,Y,\Wtaunullb,\Wtau) + H(\id,Y,\phi,\Wtaunullb,\Wtau)\mid\F_{\tau(t)}) \dd\lambda\\
	&\quad +  \expE\lb\int_t^s \la \nabla_y v(\id, Y, \Wtaunull, \Wtau), f(Y)\ra \dd\Wb\mid\F_{\tau(t)}\rb.
	\end{split}
	\end{align}
	Here we used in the second equality that \eqref{theo:regularvaluesolvesHJB:eq:2} vanishes since the It\^o integral is a martingale (as explained above for \eqref{theo:verification:eq:1}) and that we may exchange the Lebesgue integral with taking the conditional expectation in \eqref{theo:regularvaluesolvesHJB:eq:1} (at least after picking a $\borel([t,s])$- measurable modification of $r\mapsto \expE(Dv(r,Y,\Wtaunullb,\Wtau) + H(r,Y,\phi,\Wtaunullb,\Wtau)\mid\F_{\tau(t)})$). 
	By the supermartingale property discussed in Remark \ref{rem:valuefctsupermartingale} it holds for $(s_1,s_2)\in\Delta_{[t,T]}$,
	\begin{align*}
		v(s_1,Y(s_1),\Wtaunullb,\Wtau) \ge \expE(v(s_2,Y(s_2),\Wtaunullb,\Wtau)\mid\F_{\tau(s_1)})
	\end{align*}
	with respect to \as-ordering in $L^0$. Subtracting $v(t,y,\Wtaunullb,\Wtau)$ on both sides and taking the conditional expectation with respect to $\F_{\tau(t)}$, the first equation implies that
	\begin{align*}
		&\int_t^{s_1} \expE(Dv(\id,Y,\Wtaunullb,\Wtau) + H(\id,Y,\phi,\Wtaunullb,\Wtau)\mid\F_{\tau(t)}) \dd\lambda\notag\\
		&\quad +  \expE\lb\int_t^{s_1} \la \nabla_y v(\id,Y,\Wtaunullb,\Wtau), f(Y)\ra \dd\Wb\mid\F_{\tau(t)}\rb\\
		&\ge \int_t^{s_2} \expE(Dv(\id,Y,\Wtaunullb,\Wtau) + H(\id, Y,\phi,\Wtaunullb,\Wtau)\mid\F_{\tau(t)}) \dd\lambda\notag\\
		&\qquad +  \expE\lb\int_t^{s_2} \la \nabla_y v(\id,Y,\Wtaunullb,\Wtau), f(Y)\ra \dd\Wb\mid\F_{\tau(t)}\rb
	\end{align*}
	Hence $s\mapsto$ \eqref{theo:regularvaluesolvesHJB:eq:3} is non-decreasing with respect to \as-ordering. We next choose the conditional distributions of $(Y,\phi,\Wtaunulltb,\Wtaut)$ and $(Y,\phi,\Wtaut)$ given $\F_{\tau(t)}$ from Corollary \ref{cor:qvCondExp} and write $\vb$ for the transformation defined in \eqref{cor:qvCondExp:trafo:1}. Recall that $v(s,Y(s),\Wtaunullb,\Wtau) = \vb(s, Y(s),\Wtaunulltb,\Wtaut)$ for $s\in[t,T]$ by \eqref{cor:qvCondExp:trafo:2}. Using the same transformation in the arguments to go over from $H$ to $\bar{H}$, the process
	\begin{align*}
		[t,\tau(t)]\ni s\mapsto &\int_t^s \int D\vb(\id,\y,\Wtaunulltb,\z) + \bar{H}(\id,\y,\varphi,\Wtaunulltb,\z) \cP{Y,\phi,\Wtaut}{\F_{\tau(t)}}(\dd(\y,\varphi,\z),\cdot)\\
		&\quad +\int_{S^T\times C^\alpha} \int_t^s\la \nabla_y\vb(\id,\y,\wb,\z),f(\y)\ra \dd\wb\ \cP{Y,\phi,\Wtaunulltb,\Wtaut}{\F_{\tau(t)}}(\dd(\y,\varphi,\wb,\z), \cdot)
	\end{align*}
	is well-defined and a modification of $s\mapsto$ \eqref{theo:regularvaluesolvesHJB:eq:3}. Since our specific modification is continuous in $s$, there exists $ N^{t,y}\in\F$ with $\prob( N^{t,y}) = 0$ and such that on $\Omega\setminus N^{t,y}$ every sample path of the modification is non-decreasing.  Therefore they have bounded variation, so their quadratic variation is zero. Together with Corollary \ref{cor:qvCondExp} (with its particular null set $N$) we conclude that for $\omega\in\Omega\setminus( N\cup N^{t,y})$ and $s\in[t, \tau(t)]$, it holds that
	\begin{align*}
		0&= \left[ \int_{S\times C^\alpha} \int_t^\cdot \la\nabla_y \vb(\id,\y,\wb,\z), f(\y)\ra \dd\wb\ \cP{(Y,\phi,\Wtaunulltb, \Wtaut)}{\F_{\tau(t)}}(\dd(\y,\varphi,\wb,\z), \omega)\right](s)\\
		&= \int_t^s \left\vert \int_{C^\alpha} \la\nabla_y \vb(\id,\y,\wb,\z), f(\y)\ra \cP{Y,\phi,\Wtaut}{\F_{\tau(t)}}(\dd(\y,\varphi,\z), \omega)\right\vert^2 \dd s_{\big\vert \wb = \Wtaunulltb(\omega)}.
	\end{align*}
	Taking the right-derivative at $s=t$, the fundamental theorem of calculus, the construction of $S^t$ and Lemma \ref{lem:condDist} imply that in $\R^d$,
	\begin{align*}
		0&= \int_{C^\alpha} \la\nabla_y \vb(t, \y(t),\wb,\z), f(\y(t))\ra \cP{Y,\phi, \Wtaut}{\F_{\tau(t)}}(\dd(\y,\varphi,\z), \omega)_{\big\vert \wb = \Wtaunulltb(\omega)}\\
		&=\int_{S^t\times C^\alpha} \la\nabla_y \vb(t, \y(t),\wb,\z), f(\y(t))\ra  \cP{(Y,\phi,\Wtaunulltb, \Wtaut)}{\F_{\tau(t)}}(\dd(\y,\varphi,\wb,\z), \omega)\\
		&= \int\la\nabla_y \vb(t,y,\wb,\z), f(y)\ra \prob^{\Wtaut}(\dd \z)_{\big\vert \wb = \Wtaunulltb(\omega)}.
	\end{align*}
	By construction of $\vb$, the concatenation $\sqcup_t$ in \eqref{cor:qvCondExp:concatenration} and the fact that $v$ is causal in the last component,
	\begin{align*}
		\vb(t,y,\wb,\z) = v(t,y,\wb_{\mid [0,\tau(0)]},\wb\sqcup_t\z) = v(t,y,\wb_{\mid [0,\tau(0)]},\wtau).
	\end{align*}
	Consequently on $\Omega\setminus( N\cup N^{t,y})$,
	\begin{align}\label{theo:regularvaluesolvesHJB:eq:4}
		0 = \la \nabla_y v(t,y,\Wtaunullb,\Wtau), f(y)\ra.
	\end{align}
	Finally since for all $\omega\in\Omega$, $(t,y)\mapsto\la \nabla_y v(t,y,\Wtaunullb(\omega),\Wtau(\omega)), f(y)\ra$ is continuous by the regularity assumptions on the good version $v$ and continuity of $f$, we find a null set $ N^\star$, such that the claim holds on $\Omega\setminus N^\star$ for all $(t,y)$. 
	
	We next show \eqref{hjb} by adjusting the proof of \parencite[Theorem 5.3]{cossoOptimalControlPathdependent2022} to the random setting. Again by the \hyperlink{theo:roughfuncInt}{rough functional It\^o formula}, it holds for $s\in[t,T)$ on $\Omega\setminus N^\star$ that 
	\begin{align*}
		&v(s,Y^\phi(s),\Wtaunullb,\Wtau) - v(t,y,\Wtaunullb,\Wtau)\\
		&= \int_t^s Dv(\id,Y^\phi,\Wtaunullb,\Wtau) + H(\id,Y^\phi, \phi,\Wtaunullb,\Wtau)\dd\lambda + \int_t^s \la\nabla_\z v(\id,Y^\phi,\Wtaunullb,\Wtau),\dd\Wtau\ra.
	\end{align*}
	Inserting that into the \hyperlink{dpp}{dynamic programming principle}, it follows exactly as for \eqref{theo:regularvaluesolvesHJB:eq:3} that 
	\begin{align*}
		0 &= \esssup_{\phi\in \adm} \expE(v(s,Y^\phi(s),\Wtaunullb,\Wtau) - v(t,y^0, \Wtaunullb,\Wtau)\mid\F_{\tau(t)})\\
		&= \esssup_{\phi\in \adm} \int_t^s \expE(Dv(\id,Y^\phi,\Wtaunullb,\Wtau) + H(\id,Y^\phi,\phi,\Wtaunullb,\Wtau)\mid\F_{\tau(t)}) \dd\lambda,
	\end{align*}
	where the equalities hold in $L^0$.
	Fix now $\psi^0\in\R^m$ and set $\psi := \psi^0\one_{[t,T]}\in\adm$. Then for every $h>0$,
	\begin{align*}
		0 \ge \frac1h \int_t^{t+h} \expE(Dv(\id,Y^{\psi},\Wtaunullb,\Wtau) + H(\id,Y^\psi,\psi,\Wtaunullb,\Wtau)\mid\F_{\tau(t)}) \dd\lambda,
	\end{align*}
	with respect to \as-ordering in $L^0$. Using again the representative of the conditional distribution of $(Y^\psi,\psi,\Wtaunulltb,\Wtaut)$ and $(Y^\psi,\psi,\Wtaut)$ given $\F_{\tau(0)}$ from above and the transformed maps $\vb$, $\bar{H}$, we consider
	\begin{align*}
		(\omega, h) \mapsto \frac1h\int_t^{t+h} \int D\vb(\id,\y,\wb, \z) + \bar{H}(\id,\y,\varphi,\wb,\z)\cP{Y^\psi,\psi,\Wtaunulltb, \Wtaut}{\F_{\tau(t)}}(\dd(\y,\varphi,\wb,\z), \omega) \dd\lambda.
	\end{align*}
	Then this modification has continuous sample paths and for every $h>0$ the RHS is for $\prob$-\as\ $\omega\in\Omega$ non-positive, hence we find $ N^{t,y, \psi^0}\in\F$ with $\prob( N^{t,y,\psi^0}) = 0$ and such that on $\Omega\setminus N^{t,y,\psi^0}$ every sample path is non-positive. Letting $h\searrow0$, the fundamental theorem of calculus shows on $\Omega\setminus N^{t,y,\psi^0}$ that
	\begin{align*}
	 0 &\ge \int D\vb(t,\y(t),\wb,\z) + \bar{H}(t,\y,\varphi(t),\wb,\z)\cP{Y^\psi,\psi,\Wtaunulltb, \Wtaut}{\F_{\tau(t)}}(\dd(\y,\varphi,\wb,\z), \omega)\\
	 & = Dv(t,y,\Wtaunullb,\Wtau) + H(t,y,\psi^0,\Wtaunullb,\Wtau).
	\end{align*}
	Here we used Lemma \ref{lem:condDist} in the same manner as for \eqref{theo:regularvaluesolvesHJB:eq:4}. Moreover as for $N^\star$ in \eqref{theo:regularvaluesolvesHJB:eq:4}, we find a null set $ N^{\ge}$, such that the last inequality holds on $\Omega\setminus N^{\ge}$ for all $(t,y,\psi^0)$. Hence on $\Omega\setminus N^\ge$,
	\begin{align*}
		0 \ge Dv(t,y,\Wtaunullb,\Wtau) + \sup_{\varphi\in\R^m} H(t,y,\varphi,\Wtaunullb,\Wtau).
	\end{align*}
	For the reverse inequality let $\eps>0$ and $s_n\searrow t$. By Lemma \ref{lem:updirec} and the \hyperlink{dpp}{dynamic programming principle} we find $(\psi_{n,m})\subset\adm$ such that for every $n\in\N$,
	\begin{align*}
		\expE(v(s_n,Y^{\psi_{n,m}},\Wtaunullb,\Wtau)\mid\F_{\tau(t)}) \nearrow v(t,y,\Wtaunullb,\Wtau)
	\end{align*}
	as $m\to\infty$ in $L^0$. The  \hyperlink{theo:roughfuncInt}{rough functional It\^o formula} then yields that
	\begin{align*}
		 \int_t^{s_n} \expE(Dv(\id,Y^{\psi_{n,m}},\Wtaunullb,\Wtau) + H(\id,Y^{\psi_{n,m}},\psi_{n,m}, \Wtaunullb,\Wtau)\mid\F_{\tau(t)}) \dd\lambda \nearrow 0
	\end{align*}
		as $m\to\infty$ in $L^0$. Repeating arguments for our sample continuous modification, there exist for every $n\in\N$ a null set $ N(n)$, such that for $\omega\in\Omega\setminus N(n)$,
		\begin{align*}
			\int_t^{s_n} \int D\vb(\id,\y,\wb,\z) + \bar{H}(\id,\y,\varphi,\wb,\z)\cP{Y^{\psi_{n,m}},\psi_{n,m},\Wtaunulltb, \Wtaut}{\F_{\tau(t)}}(\dd(\y,\varphi,\wb,\z), \omega) \dd\lambda \nearrow 0.
		\end{align*}
		Setting $ N^{\le} := \bigcup_{n\in\N}  N(n)$ the monotone convergence as $m\to\infty$ holds on $\Omega\setminus N^{\le}$. Let $\omega\in\Omega\setminus N^{\le}$ and $\eps>0$. Then we find $M(n,\omega)\in\N$, such that for all $m\ge M(n,\omega)$,
		\begin{align}
			\begin{split}\label{theo:regularvaluesolvesHJB:eq:5}
			-\eps(s_n-t)&\le \int_t^{s_n} \int D\vb(\id,\y,\wb,\z) + \bar{H}(\id,\y,\varphi,\wb,\z)\cP{Y^{\psi_{n,m}},\psi_{n,m},\Wtaunulltb,\Wtaut}{\F_{\tau(t)}}(\dd(\y,\varphi,\wb,\z), \omega)\dd\lambda\\
			&= \int_t^{s_n} \int D\vb(u,y,\wb,\z) + \bar{H}(u,y,\varphi(u),\wb,\z)\ \cP{Y^{\psi_{n,m}},\psi_{n,m},\Wtaunulltb,\Wtaut}{\F_{\tau(t)}}(\dd(\y,\varphi,\wb,\z), \omega)\dd u\\ 
			&\quad + \rho(n,m)(\omega)
			\end{split}
		\end{align}
		where 
		\begin{align}
			\begin{split}\label{theo:regularvaluesolvesHJB:eq:6}
			\rho(n,m)(\omega) := \int_t^{s_n} \int D\vb(u,\y(u),\wb,\z) - &D\vb(u,y,\wb,\z)+ \bar{H}(u,\y(u),\varphi(u),\wb,\z) - \bar{H}(u,y,\varphi(u),\wb,\z)\\
			&\qquad \cP{Y^{\psi_{n,m}},\psi_{n,m},\Wtaunulltb,\Wtaut}{\F_{\tau(t)}}(\dd(\y,\varphi,\wb,\z), \omega).
			\end{split}
		\end{align}
		Assume for a moment that for all $\omega\in\Omega$, $\frac{1}{s_n-t} \sup_{m\ge M(n,\omega)}\rho(n,m)(\omega)\to 0$ as $n\to\infty$. By Lemma \ref{lem:qvCondExp} it holds that	
		\begin{align*}
		&\int_t^{s_n} \int D\vb(u,y,\wb,\z) + \sup_{\varphi\in \R^m}\bar{H}(u,y,\varphi,\wb,\z)\ \cP{Y^{\psi_{n,m}},\psi_{n,m},\Wtaunulltb,\Wtaut}{\F_{\tau(t)}}(\dd(\y,\varphi,\w,\z), \omega)\dd u\\ 
		& =\int_t^{s_n} \int D\vb(u,y,\Wtaunulltb(\omega),\z) + \sup_{\varphi\in \R^m}\bar{H}(u,y,\varphi,\Wtaunulltb(\omega),\z)\ \prob^{\Wtaut}(\dd\z) \dd u
		\end{align*}
		Together with \eqref{theo:regularvaluesolvesHJB:eq:5} this shows that 
		\begin{align*}
		-\eps &\le \frac{1}{s_n-t}\left[\int_t^{s_n} \int D\vb(u,y, \Wtaunulltb(\omega),\z) + \sup_{\varphi\in \R^m}\bar{H}(u,y,\varphi,\Wtaunulltb(\omega),\z)\ \prob^{\Wtaut}(\dd\z) \dd u + \rho(n,m)(\omega)\right].
		\end{align*}
		Then the RHS converges as $n\to\infty$ to
		\begin{align*}
		&\int D\vb(t,y,\Wtaunulltb(\omega),\z) + \sup_{\varphi\in \R^m}\bar{H}(t,y,\varphi,\Wtaunulltb(\omega),\z)\ \prob^{\Wtaut}(\dd\z)\\
		&\qquad = Dv(t,y,\Wtaunullb(\omega),\Wtau(\omega)) + \sup_{\varphi\in \R^m}H(t,y,\varphi,\Wtaunullb(\omega),\Wtau(\omega)).
		\end{align*}
		Letting $\eps\searrow0$, it follows that
		\begin{align*}
			0 \le  Dv(t,y,\Wtaunullb(\omega),\Wtau(\omega)) + \sup_{\varphi\in \R^m}H(t,y,\varphi,\Wtaunullb(\omega),\Wtau(\omega)).
		\end{align*}
		It is left to show $\frac{1}{s_n-t} \sup_{m\ge M(n,\omega)}\rho(n,m)(\omega)\to 0$. Since $\vb$ is the transformation of the \hyperlink{goodvaluefct}{good version}, its causal derivatives are Lipschitz continuous in $(y,\z)$ uniformly in $t\in[0,T]$ and $\wb\in \RP_{\tau(0)}$. Thus for every $u\in[t,s_n]$,
		\begin{align*}
			\vert D\vb(u,\y(u),\wb,\z) - D\vb(u,y,\wb,\z)\vert \lesssim \sup_{r\in[t,s_n]} \vert \y(r)-y\vert.
		\end{align*}
		Since for all $\omega\in\Omega$, $\cP{Y^{\psi_{n,m}},\psi_{n,m},\Wtaunulltb, \Wtaut}{\F_{\tau(t)}}$ is concentrated on the solution set $S^t\times C^\alpha$ by Lemma \ref{lem:qvCondExp} part $i)$, we may assume that $\y$ solves \eqref{solutionsset} on $[t,\tau(t)]$ with $\y(t)=y$. Thus for $n$ large enough such that $s_n\in[t,\tau(t)]$, the a priori bounds on $\y$ from Theorem \ref{theo:sol2cSDE} part $i)$, 
		\begin{align*}
			\sup_{r\in[t,s_n]} \vert \y(r)-y\vert \le \vert \y\vert_{\alpha,[t,\tau(t)]}\vert s_n-t\vert^\alpha \lesssim_{\vert b\vert_\infty, \vert f\vert_{C_b^2}} (1+\vert\wb\vert_{\alpha,[t,\tau(t)]})\vee(1+\vert\wb\vert_{\alpha,[t,\tau(t)]})^{1/\alpha})\vert s_n-t\vert^\alpha.
		\end{align*}
		Since also $b$ and $f$ are Lipschitz continuous in $\y(\cdot)\in\R^n$ we get a corresponding estimate for $\bar{H}(u,\y(u),\varphi,\wb,\z) - \bar{H}(u,\y,\varphi,\wb,\z)$. Recalling \eqref{theo:regularvaluesolvesHJB:eq:6}, it follows from Lemma \ref{lem:condDist} part $iii)$ that 
		\begin{align*}
			 \frac{1}{s_n-t} \sup_{m\ge M(n,\omega)} \vert\rho(n,m)(\omega)\vert &\lesssim \sup_{m\ge M(n,\omega)} \int_t^{s_n} \int C(\wb)\ \cP{Y^{\psi_{n,m}},\psi_{n,m},\Wtaunulltb, \Wtaut}{\F_{\tau(t)}}(\dd(\y,\varphi,\wb,\z), \omega) \dd\lambda\\
			 &= C(\Wtaunulltb(\omega)) \vert s_n-t\vert^\alpha \to 0, 
		\end{align*}
		as $n\to\infty$. Note that $C(\Wtaunulltb(\omega))$ depends on powers of $\vert\Wtaunullt\vert_\alpha$, which is finite (possibly adjusting $ N^\le$).
		
		Finally since the \hyperlink{goodvaluefct}{good version} is a modification of the value function, 
		\begin{align*}
			v(T,y,\Wtaunullb,\Wtau)\in\esssup_{\phi\in\adm} \expE(g(Y^{T,y,\phi}(T))\mid \F_{\tau(T)})=g(y).\\
		\end{align*}
		We summarize that for all $t\in[0,T)$, $y\in\R^n$ it holds $\prob$-\as\ that 
	\begin{align*}
			\begin{cases}
				0 &= Dv(t,y,\Wtaunullb,\Wtau) + \sup_{\varphi\in\R^m}\ \la\nabla_y v(t,y,\Wtaunullb,\wtau), b(t,y,\varphi)\ra\\ 
				&\qquad\qquad+ \frac12 \tr( f(y)^T\nabla^2_y v(t,y,\Wtaunullb,\Wtau) f(y)) + \frac12 \tr(\nabla_\z^2 v(t,y,\Wtaunullb, \Wtau))\tau'(t),\\
				0 &= \la\nabla_y v(t,y,\Wtaunullb,\Wtau), f(y)\ra,\\
				g(y) &= v(T, y, \Wtaunullb, \Wtau).
			\end{cases}
	\end{align*}
	Define $u\colon [0,T]\times \R^n\times C([0,\tau(0)],\R^d)\times D([0,T],\R^d)\to[0,\infty)$ by $u(t,y,\w,\z):= v(t,y,\lift(\w),\z)$, where lift is given in \eqref{defi:lift}. Then $u$ inherits the regularity properties of $v$ and is measurable (with $\borel_{\vert\cdot\vert_\infty}(C([0,T],\R^d))\vee\negl$), so that the assertion follows.
\end{proof}

\section{Examples}\label{sec:exmp}
\begin{exmp}[Optimal Investment for a Frontrunner with Price Impact]
We apply the results to \parencite{insiderbank, bankOptimalInvestmentNoisy2024}. Let us briefly describe their setting and then introduce the corresponding conditional optimal control problem.

Let $W$ be a standard Brownian motion on some probability space $(\Omega, \F, \prob)$ satisfying the usual conditions. Assume that the unimpacted price development of some risky asset is given by $S=W$ (i.e.\ in Bachelier dynamics). On top of the information flow $\F^W$ of this stock price, a ``frontrunning'' investor with investment horizon $T>0$ can peek $\Delta>0$ time units into the future and thus bases his investment decision at time $t \in [0,T]$ on 
\[ \G_t = \F_t^W \vee \sigma(W(u),\ u\in[t, (t+\Delta) \wedge T]), \quad t\in[0,T]. \]
Let $\Phi^0\in\R$ be the number shares of the risky asset held by some investor at the start of trading. The admissible trading strategies of the investor in the risky asset are then processes $\Phi\colon [0,T] \times \Omega \to \R$ such that there exists some $\G$-optional $\phi\colon [0,T] \times \Omega \to \R$ with $\phi\in\Lt([0,T], \lambda)$ $\prob$-a.s., satisfying
\begin{equation}\label{adm}
	\Phi(t) = \Phi^0 + \int_0^t \phi(s) \dd s, \quad t\in[0,T].
\end{equation}
To construct a suitable value function of the portfolio one defines for $\Lambda > 0$ the temporarily impacted price process $S^\phi = S + \frac\Lambda 2 \phi$, writes $\Phi = \Phi(\Phi^0, \phi)$ and sets 
\begin{equation}\label{portfoliovalue}
	X^{\Phi^0, \phi}(t) = \Phi(t) S(t) - \Phi(0) S(0) - \int_0^t S^\phi(s) \dd \Phi(s).
\end{equation}
In \parencite[p.10]{bankOptimalInvestmentNoisy2024} they describe the following transformation to formulate the maximization of exponential utility under the new information flow, i.e.\ of maximizing$-\expE(\exp(-\alpha X(T)^\phi)\vert \G_0)$.
Let $W^{\Delta}(t) = W((t + \Delta) \wedge T) - W(\Delta)$ and 
\[\G_t = \sigma(W(u), u\in[0,\Delta]) \vee \sigma (W^\Delta(u), u\in [0, t \wedge T - \Delta]) \vee  \mathcal{N}, \]
where $\mathcal{N}$ denotes the collection of $\prob$ null sets and 	$S^W(t) = W(t\wedge \Delta) + W^\Delta((t - \Delta) \vee 0)$.
Then $W^\Delta$ is a Brownian motion stopped at $T-\Delta$ that is independent of $\G_0$. 
Using the duality theorem \parencite[Proposition A.2]{insiderbank}, they calculate for a fixed $\w\in C([0, \Delta])$ the optimal strategy for
\begin{align}\label{condcontrolproblemexsol}
	v(0,0,\Phi^0, \w) = \max_{\phi\in\adm^{\Delta}} -\expE \exp(- X^{\Phi^0, \phi, \w}(T)),
\end{align}
where the set of admissible controls $\adm^{\Delta}$ consists of all $\G$-optional processes with $\phi\in\mathrm{L}^2([0,T], \lambda)$  $\prob$-\as\ and $X^{\Phi^0, \phi, \w}(T)$ is the terminal portfolio value \eqref{portfoliovalue} for $S = S^\w$. They also compute the expected problem value $\expE(v(0,0,\Phi^0,\Wtaunull))$, but not the quantity $v(0,0,\Phi^0,\w)$ for a given initial price path segment $\w$. Revisiting the calculations in \parencite{bankOptimalInvestmentNoisy2024}, we find that
\begin{align}
	\begin{split}\label{exmp:Delta:vnull}
	v(0,0,\Phi^0, \w) &= -\exp\Bigg(-  \Phi^0(\w(\Delta)- \w(0)) - \frac{1}{2\Lambda} \int_0^\Delta (\w(\Delta) - \w(s))^2 \dd s\\
	&\quad \quad
	+ \frac12 \frac{\lb \Phi^0 + \frac{1}{\Lambda} \int_0^\Delta \w(\Delta) - \w(s) \dd s\rb^2}{\frac{\Delta}{\Lambda} + \frac{1}{\sqrt{\Lambda}} \coth\lb \frac{1}{\sqrt{\Lambda}} (T-\Delta) \rb} - \frac{\Delta}{2\Lambda} \int_{\Delta}^T \frac{1}{1 + \frac{\Delta}{\sqrt{\Lambda}} \tanh\lb \frac{1}{\sqrt{\Lambda}} (T-s) \rb} \dd s\Bigg).
	\end{split}
\end{align}

Let us show how these findings fit into our setting and formulate the problem as a conditional optimal control problem.
The driving coefficients are simply set to be
\begin{align*}
	b(s, y, \varphi) := \lb - \frac{\Lambda}{2} \varphi^2, \varphi, 0\rb, \quad f(s, y, \varphi) := (y_2,0,1) 
\end{align*}
and the initial condition is $y^t := (x^t, \Phi^t, s^t)\in\R^3$. Then the solution $Y=(Y^1,Y^2,Y^3)$ to 
\begin{align*}
	\dd Y(s) &= b(s, Y, \phi(s)) \dd s + f(Y(s)) \dd \Wb(s),\quad s\in (t, T],\\
	Y(t) &= y.
\end{align*}
can be read off: For $s\in[t,T]$, $Y^2(s) = \Phi^t + \int_t^s \phi(r) \dd r=\Phi^{\Phi^t,\phi}(s)$ describes the number of shares held at time $s$,
$Y^3(s) = s^t + W(s)-W(t)=S^{t,s^t}(s)$ gives the stock price dynamics and the investor's net worth when marking to market the shares held is
\begin{align*}
	Y^1(s) &= x - \frac{\Lambda}{2} \int_t^s \phi(r) \dd r + \int_t^s Y^2(r) \dd \Wb(r).
\end{align*}
Since $\Phi$ has finite variation, the rough integral in $Y^1$ is simply a Young integral and integration by parts shows that $Y^1$ describes the portfolio value $ X^{t,s^t,\Phi^t}$ from \eqref{portfoliovalue} started at $t$. Hence $Y=( X^{t,s^t,\Phi^t}, \Phi^{\Phi^t,\phi}, S^{t,s^t})$. Further set the terminal reward to be $g(y) := -\exp(-x)$ and the time change to 
\begin{align*}
	\tau(t):= (t+\Delta)\wedge T.	
\end{align*}
Then value function $v(t,x,\Phi,s)$ is characterized by a deterministic function $u(t,x,\Phi,s, \wb, \z)$ which should satisfy the HJB equation
\begin{align}\label{hjb:expm1}
	\begin{split}
		\begin{cases}
			0 &= Du + \sup_{\varphi\in\R} \{-\frac{\Lambda}{2}\varphi^2\partial_x u + \varphi\partial_\Phi u\} + \frac12\lb \Phi^2\partial_x^2 u + 2\Phi\partial_{xs}^2u + \partial_s^2u + \nabla_\z^2 u\rb,\\
            0 &= \Phi\partial_x u + \partial_s u,\\
			g(x) &= u(T,x,\cdot).
		\end{cases}
	\end{split}
\end{align}
We construct a candidate for $u$. Thereby $h$ denotes a help functions that may vary from line to line and we start with general arguments $(t,x,\Phi,s,\wb,\z)\in [0,T]\times \R^3\times C([0,\tau(0),\R^d])\times C([0,T],\R^d)$: The terminal condition suggests 
\begin{align*}
	u(t,  x,\Phi, s, \w, \z) = -\exp(-(x + h(t,\Phi, s, \w, \z))) \text{ with } h(T,\Phi,s,\w,\z)=0.
\end{align*}
The second equation in \eqref{hjb:expm1} yields
\begin{align*}
	0 = -u \Phi - u\partial_sh,\ \text{ i.e. }\ \partial_s h = -\Phi.
\end{align*}
Thus 
\begin{align*}
	u(t,x,\Phi,s,\w,\z)= -\exp(-(x-\Phi s+h(t,\Phi,\w,\z))).
\end{align*}
We plug this Ansatz into \eqref{hjb:expm1} and calculate 
\begin{align*}
	&Du = -u Dh,\ \partial_xu = -u,\ \partial_\Phi u = -u(-s + \partial_\Phi h),\ \partial_su = -u \Phi,\ \nabla_\z u = -u\nabla_\z h,\\ 
	&\partial_x^2u = u,\ \partial_{xs}^2u = u\Phi,\ \partial_s^2 u = u\Phi^2,\ \partial_z^2u = u((\partial_z h)^2 - \nabla_\z^2 h).
\end{align*}
The supremum in \eqref{hjb:expm1} is realized by $\varphi^* = \frac{1}{\Lambda}\frac{\partial_\Phi u}{\partial_xu} = \frac{1}{\Lambda}(-s + \partial_\Phi h)$ and so the HJB equation reduces to a functional partial differential equation for $h(t,\Phi,\w_\Delta,\w^\Delta)$: 
\begin{align}\label{exmp1:DEh}
	0 = Dh + \frac{1}{2\Lambda}(-s +\partial_\Phi h)^2 + \frac12((\partial_z h)^2 - \nabla_\z^2 h)).
\end{align}
Recalling \eqref{exmp:Delta:vnull}, we make the following educated guess:
\begin{align*}
	h(t,\Phi,\w,\z) &= \Phi(\w(\Delta)+\z(t))\\
	&\quad + \frac{1}{2\Lambda}\left( \int_{t\wedge\Delta}^\Delta (\w(\Delta) +\z(t)-\w(u))^2 \dd u + \int_{(t-\Delta)\vee 0}^t (\z(t)- \z(u))^2 \dd u\right)\\
	&\quad - \frac12\Upsilon(t)C(t, \Phi, \w, \z)^2+ \frac{\Delta}{2\Lambda}\Omega(t)
\end{align*}
with
\begin{align*}
	C(t, \Phi, \w, \z) =  \Phi + \frac{1}{\Lambda} \left(\int_{t\wedge\Delta}^\Delta (\w(\Delta) +\z(t)-\w(u)) \dd u + \int_{(t-\Delta)\vee 0}^t (\z(t) - \z(u)) \dd u\right)
\end{align*}
and
\begin{align}\label{alpha}
	\begin{split}
		\Upsilon(t) = \frac{\frac{\Lambda}{\Delta}}{1+ \frac{\sqrt{\Lambda}}{\Delta} \coth\lb \frac{1}{\sqrt{\Lambda}} (T-((t+\Delta) \wedge T)) \rb},\quad		\Omega(t) = \int_{(t+\Delta) \wedge T}^T \frac{1}{1 + \frac{\Delta}{\sqrt{\Lambda}}\tanh\lb \frac{1}{\sqrt{\Lambda}}(T-u)\rb} \dd u.
	\end{split}
\end{align}
Then the causal time derivative of $h$ is
	\begin{align*} 
		D h(t,\Phi,\w,\z) &= - \frac{1}{2\Lambda} (\z(t) - \w(\Delta) - \w(t\wedge\Delta) -\z(t-\Delta \vee 0))^2 -  \frac{1}{2} \Upsilon'(t)C(t, \Phi, \w, \z)^2 \\
		&\quad\quad +\frac{1}{\Lambda} \Upsilon(t) C(t, \Phi, \w, \z) (\z(t) - \w(\Delta) - \w(t\wedge\Delta) -\z(t-\Delta \vee 0)) + \frac{\Delta}{2\Lambda}\Omega'(t)
	\end{align*}
	and its causal space derivatives are given by
	\begin{align*}
		\partial_\Phi h(t,\Phi,\w,\z) &= \w(\Delta)+\z(t) - \Upsilon(t)C(t, \Phi, \w, \z),\\ 
		\nabla_\z h(t,\Phi,\w,\z) &= -\lb \frac{\Delta}{\Lambda}\Upsilon(t) - 1 \rb C(t, \Phi, \w, \z),\\ 
		\nabla_\z^2 h(t,\Phi,\w,\z) &= -\frac{\Delta}{\Lambda} \lb \frac{\Delta}{\Lambda}\Upsilon(t) - 1 \rb.
	\end{align*}
This yields the optimal trading speed also found in \parencite[Theorem 2.2]{insiderbank} given via the feedback relation:
\begin{align*}
	\begin{split}
		&\phi^*(t) = \frac{1}{\Lambda}(-W(t) + \partial_\Phi h(t, \Phi(t), W_{\Delta}, W^\Delta))\\
		&= \frac{1}{\Lambda}\lb W(t+\Delta) - W(t) - \Upsilon(t)\lb \Phi(t) + \frac{1}{\Lambda} \int_t^{(t+\Delta)\wedge T} W((t+\Delta)\wedge T) - W(u) \dd u \rb\rb.
	\end{split}
\end{align*}
Finally we show that for $s=W(t)$, $h(t, \Phi, W_\Delta, W^\Delta)$ solves \eqref{exmp1:DEh}, where $W_\Delta$ is the restriction of $W$ to $[0,\Delta]$ and $W^\Delta(r) := W(r+\Delta\wedge T)-W(\Delta)$. Note that 
\begin{align*}
	W^\Delta(t) - W_\Delta(\Delta) - W_\Delta(t\wedge\Delta) -W^\Delta(t-\Delta \vee 0) = W(t+\Delta)-W(t)=:\delta^\Delta W(t).
\end{align*}
Then 
\begin{align}
	&Dh + \frac{1}{2\Lambda}(-s +\partial_\Phi h)^2 + \frac12((\partial_z h)^2 - \nabla_\z^2 h))\notag\\
	\begin{split}\label{exmp1:eq:2}
	&= - \frac{1}{2\Lambda} (\delta^\Delta W)^2 -  \frac{1}{2} \Upsilon' C^2  +\frac{1}{\Lambda} \Upsilon C\delta^\Delta W + \frac{\Delta}{2\Lambda}\Omega' + \frac{1}{2\Lambda}(\delta^\Delta W - \Upsilon C)^2\\
	&\quad + \frac12 \lb  \frac{\Delta}{\Lambda}\Upsilon - 1 \rb^2 C^2 +\frac12 \frac{\Delta}{\Lambda} \lb \frac{\Delta}{\Lambda}\Upsilon - 1 \rb.
	\end{split}
\end{align}
Noting that $\frac{\Delta}{\sqrt{\Lambda}}\tanh\lb \frac{1}{\sqrt{\Lambda}}(T-u)\rb = 1 - \frac{\Delta}{\Lambda} \Upsilon(u)$, we have the relation 
\begin{align*}
	\Omega(r)  = T - (r + \Delta)\wedge T - \frac{\Delta}{\Lambda} \int_{r\wedge T}^ {T-\Delta} \Upsilon(u) \dd u
\end{align*}
and one checks that
\begin{align*}
	\Upsilon'(r) = - \lb\frac{\Delta}{\Lambda} \Upsilon(r) - 1 \rb^2 + \frac{1}{\Lambda} \Upsilon(r)^2, \quad \Omega'(r) = - \lb\frac{\Delta}{\Lambda} \Upsilon(r) - 1\rb.
\end{align*}
This implies that \eqref{exmp1:eq:2} equals zero. Consequently $u(t,x,\Phi,s,\w,\z)= -\exp(-(x-\Phi s+h(t,\Phi,\w,\z)))$ solves \eqref{hjb:expm1}.
\end{exmp}

\begin{exmp}\label{exmp:2} We reconsider the (counter)example from \parencite[Appendix]{buckdahnPathwiseStochasticControl2007}, see also \parencite{frizControlledRoughSDEs2024} for a multidimensional version. 
Assume the dynamics
\begin{align*}
	\dd Y(s) = \begin{pmatrix}\phi(s)\\ 0\end{pmatrix}\dd s + \begin{pmatrix} Y^1(s)\\ 1 \end{pmatrix} \dd W(s)
\end{align*}	
controlled by $\phi$ which we restrict to values in $U=[0,1]$. Write $Y=(X,W)$ and for the initial conditions $y =(x,w)\in\R^2$. For $\tau(t)\equiv T$, i.e. full knowledge of $W$ from the start and terminal cost $\vert x -1\vert$ we consider the problem
\begin{align*}
	\underset{\substack{\phi\colon [0,T]\times\Omega\to [0,1],\\ \phi\in\A}} {\esssinf} \expE(\vert Y^1(T) -1\vert\mid \F_T).
\end{align*} 
 Since $\tau(t)\equiv T$, $W_{\tau(0)} = W$ and $W^\tau = 0$. Therefore a deterministic good version $u$ of the value function $v(t,x,w)$ depends only on $\w\in C([0,T],\R^d)$ and we drop the argument $\z$. Thus for $\prob^W$-\as\ $\w\in C([0,T],\R^d)$, $u(t,x,\w(t),\w)$ should solve
\begin{align*}
	\begin{cases}
	0 &= Du + \inf_{\varphi\in[0,1]} \varphi\partial_x u\\
	0 &= x\partial_x u + \partial_w u,\\
	\vert x-1\vert &= u(T,x,\cdot).
	\end{cases}
\end{align*}
We next adjust the optimal control and the value function for initial datum $(0,0)$ found in \parencite[2255]{buckdahnPathwiseStochasticControl2007}. The solution to the controlled differential equation started in $(t,x)$ is given by \parencite[Lemma 3.2]{buckdahnPathwiseStochasticControl2007} as
\begin{align}\label{exmp:2:eq:1}
	X^{t,x,\phi}(s) =x\exp(W(s)-W(t))+\int_t^s\exp(W(s)-W(r))\phi(r)\dd r. 
\end{align}
On $\{1 \leq x\exp(W(T)-W(t))+\int_t^T \exp(W(T)-W(r))\dd r\}$, the control
\begin{align*}
	\phi^*(s)= \frac{1 -x\exp(W(T)-W(t))}{\int_t^T \exp(W(T)-W(r))\dd r}
\end{align*}
yields $X^{t,x,\phi^*}(T)=1$, so the value of the problem is zero.
On the complement the terminal condition and the explicit solution \eqref{exmp:2:eq:1} suggest that 
\begin{align*}
	u(t,x,w,\w)=\vert xh(t,w,\w)+g(t,w,\w)-1 \vert
\end{align*}
for auxiliary functions $h,g$ with $h(T,w(T),\w)=1$, $g(T,w(T),\w)=0$. Let us consider a state where
\begin{align*}
	u(t,x,w,\w) = 1-xh(t,w,\w)-g(t,w,\w).
\end{align*}
The second equation in \eqref{hjb} reads
\begin{align*}
	0 = -x h(t,w,\w)- x\partial_w h(t,w,\w)-\partial_w g(t,w,\w), 
\end{align*}
and is solved by $h(t,w,\w)=c(\w)\exp(-w)$ and any $g$ with $\partial_wg=0$. The terminal condition $h(T,w(T),\w)=1$ implies that $c(\w)=\exp(w(T))$. The first partial differential equation is then 
\begin{align*}
	0 = Dg(t,\w) + \inf_{\varphi\in[0,1]} \varphi(-\exp(W(T)-w)) = Dg(t,\w) -\exp(W(T)-w)
\end{align*}
for optimal $\varphi^*=1$. Recalling Examples \ref{exmp:funcdiff} together with $g(T,\w)=0$, it follows in the case $w=\w(t)$, $g(t,\w) =\int_t^T \exp(\w(T)-\w(r))\dd r$. Consequently 
\begin{align*}
	u(t,x,w,\w)= 1-x\exp(w(T)-w)-\int_t^T\exp(\w(T)-\w(r))\dd r. 
\end{align*}
Together
\begin{align*}
	u(t,x,W(t),W) = \left[1-\lb x\exp(W(T)-W(t)) + \int_t^T\exp(W(T)-W(r))\dd r\rb\right]^+,
\end{align*}
which is consistent with the literature that deals with the case $(t,x)=(0,0)$.
\end{exmp}

\begin{rem}
    The above example allows us to illustrate that the transport part of our HJB equation is indispensable for our verification argument: Making the same Ansatz, we find that $h\equiv 1$ and $g(t,w,\w)=T-t$ yield that $u(t,x,w,\w)=1-x-(T-t)$ only solves the first equation,
    \begin{align*}
        0&=Du(t,x,w,\w) +\inf_{\varphi\in[0,1]} \varphi\partial_xu(t,x,w,\w)= -Dg(t,w,\w)-1=0,
    \end{align*}
    but yields the wrong value $u(0,0,W(t),W)=1-T\neq 1- \int_t^T \exp(W(T)-W(r))\dd r$ for the optimization problem.
\end{rem}

\begin{exmp}\label{exmp:3}
	Let us consider an insider trading problem from \parencite[Example 1.1, 3.17]{allanPathwiseStochasticControl2019}, made rigorous in \parencite[6.3]{frizControlledRoughSDEs2024}. Let $B$ and $W$ be independent Brownian motions, $\sigma_0>0$ and assume the dynamics 
	\begin{align*}
		\dd Y(s)= \begin{pmatrix} 0\\ \phi(s)\\ 0\end{pmatrix}\dd s + \begin{pmatrix} Y^2(s)\\ 0\\ 0\end{pmatrix} \dd B(s) + \begin{pmatrix} Y^2(s)\\ 0\\ 1 \end{pmatrix} \dd \Wb(s)
	\end{align*}
	controlled by $\phi$. The goal is to maximize the expected terminal wealth under transaction costs $\int_0^T\eps\phi(s)^2 \dd s$ for some $\eps>0$ and with full knowledge of $W$. That is we have $\tau(t) \equiv T$ and the problem value at time $t=0$ is
	\begin{align*}
		\esssup_{\phi\in\adm} \expE\lb Y^1(T) - \int_0^T \eps \phi(s)^2\dd s\mid \F_T^W\rb,
	\end{align*} 
	where the set of admissible controls $\adm$ consists of all $\phi\colon [0,T]\times\Omega\to\R$ that are progressively measurable with respect to $\G_t =\F_T^W\vee\F^B_t$. We write again $(x,\Phi,w)$ for the initial condition of the portfolio value, number of shares and to track the current value of the Brownian motion $W$.
	From the literature we know that
	\begin{align*}
		v(t,x,\Phi)=x+(W(T)-W(t))\Phi + \frac{3}{4\eps} \int_t^T(W(T)-W(s))^2\dd s 
	\end{align*}
	is a pathwise solution to the rough HJB equation
	\begin{align*}
		-(v(T, x, \Phi)-v(t,x,\Phi)) = x + \int_t^T \inf_{\varphi} \{\varphi \partial_\Phi v + \frac12\sigma_0^2\Phi^2\partial^2_x v + \eps\varphi^2\} \dd \lambda + \int_t^T\Phi\partial_y v\dd \Wb^{\mathrm{strat}},
	\end{align*}
	where $\Wb^{\mathrm{strat}}$ is the lift using Stratonovich integration, i.e.\ to a geometric rough path.
	We add $\w\in C([0,T],\R^d)$ as a parameter (and no functional dependence since $\tau(t)=T$) and consider
	\begin{align*}
		u(t,x,\Phi,w,\w):= x+(\w(T)-w)\Phi + \frac{3}{4\eps} \int_t^T(\w(T)-\w(s))^2\dd s.
	\end{align*}
	Then for fixed $x,\Phi\in\R^2$, $\w\in C([0,T],\R^d)$ it holds that $u(\cdot, x,\Phi,\cdot, \w)\in C^{1,3}$. Hence by the rough It\^o formula, cf. \parencite[Proposition 5.8]{RoughBook},
	\begin{align*}
		&v(T, x, \Phi)-v(t,x,\Phi)
		=u(T,x,\Phi, W(T), W) -u(t, x,\Phi,W(t),W)\\
		&= \int_t^T \partial_su(s,x,\Phi,W(s),W) \dd s + \int_t^T \partial_wu(s,x,\Phi,W(s),W) \dd\Wb^{\mathrm{strat}}(s).
	\end{align*}
	Note that $\partial_xu=\partial_x v=1$, $\partial_x^2u = \partial_x^2v =0$, $\partial_\Phi u(t,x,\Phi,\w(t), \w)=\partial_\Phi v(t,x,\Phi)=-(W(T)-W(s))$ and 
	\begin{align*}
		\partial_tu(t,x,\Phi,W(t), W)= -\frac{3}{4\eps}(W(T)-W(t))^2,\quad \partial_wu(t,x,\Phi,W(t), W) = -\Phi.
	\end{align*}
	It follows that $u$ solves for $\prob^W$-\as\ $\w\in C([0,T],\R^d)$,
	\begin{align*}
		\begin{cases}
		0&=\partial_tu(t,x,\Phi,\w(t), \w) + \inf_{\varphi} \{\varphi \partial_\Phi u(t,x,\Phi,\w(t), \w) + \frac12\sigma_0^2\Phi^2\partial^2_x u(t,x,\Phi,\w(t), \w) + \eps\varphi^2\},\\
		0&= \Phi\partial_x u(t,x,\Phi,\w(t), \w)+ \partial_w u(t,x,\Phi,\w(t), \w),\\
		x&=u(T,x,\Phi,\w(T), \w).
	\end{cases} 
	\end{align*}
	We point out that we could also write causal derivatives here. Indeed, we have $Du = \partial_tu$ here and similarly $\partial_wu$ coincides with the causal space derivative of $u$ when considering $u$ as a pathdependent functional that depends on its fourth argument $\w$  only on through the current value $\w(t)$. Similar considerations apply for derivatives with respect to $x,\Phi$. Recall also that $\tau(t)\equiv T$, so that the fifth argument is in fact the parameter $\wtaunull= \w$ and $\wtau\equiv 0$, hence there is no other causal space derivative. 
\end{exmp}

\begin{rem}
	The last example suggests that the results extend to controlled rough stochastic differential equations introduced in \parencite{frizRoughStochasticDifferential2024, frizControlledRoughSDEs2024} in the following way. Consider a controlled rough stochastic differential equation,
	\begin{align*}
		\dd Y(t)= b(t,Y^{\Xb}(t),\phi(t))\dd t + \sigma(t,Y^{\Xb}(t),\phi(t))\dd B(t) + (f, f')(t, Y^{\Xb}(t))\dd \Xb(s),
	\end{align*}
	a randomization of $X\rightsquigarrow W(\omega)$ to a Brownian motion $W$ that is independent of the Brownian motion $B$ and an information flow modeled by a time-change $\tau$. For terminal reward function $g\colon \R^n\to [0,\infty)$, the optimal control problem is
	\begin{align*}
		\esssup_{\phi\in\adm} \expE\lb g(Y^{t,y,\phi}(T))\mid\F^W_{\tau(t)}\rb
	\end{align*}
	where the set of admissible controls $\adm$ consists of all $\phi\colon [0,T]\times\Omega\to\R$ that are progressively measurable with respect to $\F_\tau =(\F_{\tau(t)}^W\vee\F^B_t)$. If the corresponding value function has a regular representative $u(t,y,\Wtaunullb,W(t),\Wtau)$, we expect it to be characterized by the causal HJB: For every $(t,y)\in[0,T]\times\R^n$ and $\prob^W$-\as\ $\w\in C([0,T],\R^d)$,
	\begin{align*}
		\begin{cases}
			0 &=  Du(t, y,\wtaunull,\w(t),\wtau) + \sup_{\varphi\in\R} \la\nabla_y u(t, y,\wtaunull,\w(t),\wtau), b(t,y,\varphi)\ra +\\ 
			&\qquad+ \frac12 [\tr(\sigma(t,y,\varphi)^T\nabla^2_y u(t,y,\wtaunull,\w(t),\wtau)\sigma(t,y,\varphi))\\
            &\qquad\ + \tr(f(t,y)^T\nabla^2_y u(t,y,\wtaunull,\w(t),\wtau) f(t,y)) + \tr(\nabla_\z^2 u(t,y,\wtaunull,\w(t), \wtau))\tau'(t)],\\
            0 &= \la\nabla_y u(t,y,\wtaunull,\w(t),\wtau), f(t,y)\ra + \nabla_w u(t,y,\wtaunull,\w(t),\wtau),\\
			g(y) &= u(T,y,\wtaunull,\w(t),\wtau).
		\end{cases}
    \end{align*}
\end{rem}

\appendix 

\section{}\label{sec:appendix}
\begin{proof}[Proof of Theorem \ref{theo:funcIto4SolXW}] The proof basically adapts \parencite[Theorem 7.7]{RoughBook}. We give it only because we later rely on explicit estimates.\\
Fix $\alpha$ as in the \hyperlink{theo:roughfuncInt}{rough functional It\^{o} formula}.
Since $Y$ solves \eqref{cSDE} $\prob$-\as\ pathwise and $\tau$ is continuously differentiable, the tuple $(Y, \Wtau)$ is again $\prob$-\as\ $\alpha$-H\"older continuous. By Corollary \ref{cor:2varXW} there exists a sequence of partitions $\partition$ with $\vert\partition\vert\to0$ such that $(Y,\Wtau)$ has $\prob$-\as\ finite quadratic variation along $\partition$. Hence by \hyperlink{theo:roughfuncInt}{rough functional It\^{o} formula} it holds $\prob$-\as\ that 
\begin{align*}
		F(T, Y, \Wtau) &= F(0, Y, \Wtau) + \int_0^T DF(\id, Y, \Wtau) \dd \lambda + \int_0^T \nabla F(\id, Y, \Wtau) \dd (\Yb, \Wb^{\circ\tau})\\
		&\quad + \frac12 \int_0^T \la \nabla^2 F(\id, Y, \Wtau), \dd[(Y, \Wtau)]\ra,
\end{align*}
where with some abuse of notation $\prob$-\as,
\begin{align*}
	&\int_0^T \nabla F(\id, Y, \Wtau) \dd (\Yb, \Wb^{\circ\tau})\\
	&= \lim_{\vert \partition\vert \to 0} \sum_{[s,t]\in\partition} \langle\nabla F(s, Y, \Wtau),\begin{pmatrix} Y(t)-Y(s)\\ \Wtau(t) - \Wtau(s)\end{pmatrix}\rangle\\
	&\qquad\qquad\qquad + \frac12 \langle\nabla^2 F(s, Y, \Wtau),\begin{pmatrix} Y(t)-Y(s)\\ \Wtau(t) - \Wtau(s)\end{pmatrix}^{\otimes 2} - \begin{pmatrix} \int_s^t f(Y)^{\otimes 2}\dd\lambda& 0\\0& \tau(t)-\tau(s)\end{pmatrix}\rangle.
\end{align*}
(Not necessarily along $\partition$ for $[\cdot]$.) The terms in the approximating sequence decompose to
\begin{align}
	&\langle\nabla F(s, Y, \Wtau),\begin{pmatrix} Y(t)-Y(s)\notag\\ \Wtau(t) - \Wtau(s)\end{pmatrix}\rangle\\
	&\quad + \frac12 \langle\nabla^2 F(s, Y, \Wtau),\begin{pmatrix} Y(t)-Y(s)\notag\\ \Wtau(t) - \Wtau(s)\end{pmatrix}^{\otimes 2} - \begin{pmatrix} \int_s^t f(Y)^{\otimes 2}\dd\lambda& 0\\0& \tau(t)-\tau(s)\end{pmatrix}\rangle\\
	&= \langle\nabla_y F(s, Y, \Wtau), \int_s^t b(\id, Y) \dd\lambda \ra\label{theo:funcIto4SolXW:eq:1}\\
	&\quad + \langle\nabla_y F(s, Y, \Wtau), \int_s^t f(Y) \dd\Wb \rangle + \frac12 \la \nabla^2_y F(s, Y,\Wtau), \lb \int_s^t f(Y) \dd\Wb \rb^{\otimes 2} - \int_s^t f(Y)^{\otimes 2} \dd\lambda\ra\label{theo:funcIto4SolXW:eq:2}\\
	&\quad + \frac12 \la \nabla^2_y F(s, Y,\Wtau), 2\int_s^t b(\id, Y) \dd\lambda \otimes \int_s^t f(Y) \dd\Wb + \lb \int_s^t b(\id, Y) \dd\lambda \rb^{\otimes 2}\ra\label{theo:funcIto4SolXW:eq:3}\\
	&\quad + \frac12 \la \nabla_{y\z} F(s, Y, \Wtau), (Y(t)-Y(s))\otimes(\Wtau(t) - \Wtau(s))) \ra\label{theo:funcIto4SolXW:eq:4}\\
	&\quad + \la\nabla_\z F(s, Y, \Wtau),(\Wtau(t)- \Wtau(s))\ra + \frac12 \la \nabla_\z^2 F(s, Y, \Wtau)\lb (\Wtau(t)-\Wtau(s))^{\otimes 2} - (\tau(t)-\tau(s))\diag(\one_d)\rb\ra.\label{theo:funcIto4SolXW:eq:5}
\end{align}
Summing over \eqref{theo:funcIto4SolXW:eq:1} and letting $\vert\partition\vert\to 0$ yields $\int_0^T \la \nabla_y F(\id, Y, \Wtau), b(\id, Y)\ra \dd\lambda$. In the same sense \eqref{theo:funcIto4SolXW:eq:3} converges to zero, since the causal space derivatives of $F$ are bounded and the rest is at least $1+\alpha>1$ H\"older continuous. Moreover since $s\mapsto \nabla_{y\z} F(s, Y, \Wtau)$ is continuous and $\mu_n^{Y, \Wtau}\rightharpoonup 0$, summing over \eqref{theo:funcIto4SolXW:eq:4} along the sequence of partitions $\partition$ from Corollary \ref{cor:2varXW} it holds that
\begin{align*}
 \lim_{n\to\infty} \sum_{[s,t]\in\partition_n} \la \nabla_{y\z} F(s, Y, \Wtau), (Y(t)-Y(s))\otimes(\Wtau(t) - \Wtau(s))) \ra  = 0.
\end{align*}
Considering the last summand \eqref{theo:funcIto4SolXW:eq:5}, it is well known that rough integration against a continuous semimartingales with It\^o lift coincides $\prob$-\as\ with It\^o integration whenever both are well-defined, compare \parencite[Lemma 4.35]{chevyrevCanonicalRDEsGeneral2019}. To apply this we first note that $\prob$-\as, $[\Wtau] = \tau - \tau(0) = \la \Wtau\ra$ and so by \parencite[Lemma 5.4, Remark 5.7]{RoughBook}, 
\begin{align}
    \sym{\W^{\circ\tau}}= \frac12 \lb (\Wtau(t)-\Wtau(s))^{\otimes 2} - (\tau(t)-\tau(s))\diag(\one_d)\rb.
\end{align}
Recalling $\nabla_\z^2 F(s, Y, \Wtau)\in\Sym{d}$, the last summand in \eqref{theo:funcIto4SolXW:eq:5} equals $\la \nabla_\z^2 F(s, Y, \Wtau),\W^{\circ\tau}(s,t)\ra$.
Thus \eqref{theo:funcIto4SolXW:eq:5} is exactly the approximation for the rough integral of $(\nabla_\z F(\id, Y,\Wtau), \nabla_\z^2 F(\id, Y,\Wtau))$ against the It\^o lift of $\Wtau$. Moreover since $\Wtau$ is a martingale with respect to  the time-changed filtration $(\F_{\tau(t)})_{t\in[0,T]}$ and $\nabla_\z F(\id, Y, \Wtau)$ is again a continuous causal functional (hence adapted to $(\F_{\tau(t)})_{t\in[0,T]}$), it follows that $\int_0^T \la\nabla_\z F(\id, Y, \Wtau),\dd\Wtau\ra$ is a well-defined It\^{o} integral.\\
It is left to consider \eqref{theo:funcIto4SolXW:eq:2}. Note again that since $\nabla^2_yF(s, Y, \Wtau)\in\Sym{n}$ and by associativity of the symmetric tensor product, it holds that
\begin{align*}
    \la\nabla^2_yF(s, Y, \Wtau), f(Y(s))^{\otimes 2} \W(s,t)\ra &= \la\nabla^2_yF(s, Y, \Wtau), \sym{f(Y(s))^{\otimes 2} \W(s,t)}\ra\\
    &= \la\nabla^2_yF(s, Y, \Wtau), \sym{f(Y(s))^{\otimes 2} \sym{\W(s,t)}}\ra.
\end{align*}
Recalling the notation from Section \ref{sec:condProblem:sec:stateEq} and using that $\sym{\W(s,t)}=\frac12((W(t)-W(s))^{\otimes 2}-(t-s)\diag(\one_d))$,
\begin{align}
	&\langle\nabla_y F(s, Y, \Wtau), \int_s^t f(Y) \dd\Wb \rangle + \frac12 \la \nabla^2_y F(s, Y,\Wtau), \lb \int_s^t f(Y) \dd\Wb \rb^{\otimes 2} - \int_s^t f(Y)^{\otimes 2} \dd\lambda\ra\notag\\
	\begin{split}\label{theo:funcIto4SolXW:eq:6}
	&= \Xi_{s,t} + \la \nabla_y F(s, Y, \Wtau), \int_s^t f(Y)\dd\Wb- f(Y(s))(W(t)-W(s))- \nabla_y f(Y(s))\otimes f(Y(s))\W(s,t)\ra\\
	&\quad + \frac12\la\nabla^2_y F(s, Y, \Wtau), \lb R^{\int f(Y)\dd\Wb, W}(s,t)\rb^{\otimes 2} +  R^{\int f(Y)\dd\Wb, W}(s,t) \otimes f(Y(s))(W(t)-W(s))\ra\\
	&\quad - \frac12 \la\nabla^2_y F(s, Y, \Wtau), \int_s^t f(Y)^{\otimes 2}\dd\lambda - f(Y(s))^{\otimes 2}(t-s)\diag(\one_d)\ra, 
	\end{split}
\end{align}
with $\Xi(s,t)=Z(s)(W(t)-W(s)) + Z'(s)\W(s,t)$ writing $(Z,Z')$ as in the theorem formulation \eqref{theo:funcIto4SolXW:Z}.
$\Xi$ is the approximation sequence for integrating the product of the $W$-controlled rough paths $\nabla_y F(\id, Y, \Wtau)$ and $f(Y)$ against $W$. As stated in \parencite[Corollary 7.4]{RoughBook} a Leibniz rule holds for controlled rough paths. 
Because we rely on estimates for $\int \la \nabla_yF, f\ra\dd\Wb$ we give next give a proof in our setting.
Recalling the notation and identifications from Section \ref{sec:notation}, it holds $Z\colon[0,T]\to \R^d$ and $Z'\colon[0,T]\to\Lin(\R^d,\R^d)$.
Then it follows from Remark \ref{rem:nablaFcontrolled} (clearly $\vert \nabla_y F(\id, Y,\Wtau)\vert \le \vert \nabla F(\id, Y,\Wtau)\vert$ etc.) and estimates \eqref{estimate:sigma(Y)} that
\begin{align*}
\begin{split}
\vert Z'(t)-Z'(s)\vert 
&\le \vert\nabla_y^2 F\vert_\infty \vert f(Y(t))-f(Y(s))\vert +  \vert f\vert_\infty^2 \vert \nabla_y^2 F(t,Y,\Wtau)-\nabla^2_y F(s,Y,\Wtau) \vert\\
&\quad +\vert\nabla_y F\vert_\infty \vert \nabla_yf(Y(t))f(Y(t))-\nabla_yf(Y(s))f(Y(s))\vert\\
&\quad + \vert\nabla_yf\vert_\infty \vert\nabla_yF(t,Y, \Wtau) - \nabla_yF(s,Y,\Wtau)\vert\lesssim \vert t-s\vert^\alpha,
\end{split}
\end{align*}
and
\begin{align*}
\begin{split}
\vert R^{Z, W}(s,t)\vert 
&\le \vert f\vert_\infty \vert R^{\nabla F(Y,\Wtau), (Y,\Wtau)}(s,t)\vert + \vert\nabla^2 F\vert_\infty\vert Y(t)-Y(s)\vert \vert f(Y(t))-f(Y(s))\vert\\
&\quad + \vert\nabla^2 F\vert_\infty\vert f\vert_\infty \vert R^{Y,W}(s,t)\vert + \vert \nabla F\vert_\infty \vert R^{f(Y), W}(s,t)\vert \lesssim \vert t-s\vert^{\alpha+\alpha^2},
\end{split}
\end{align*}
for a constant depending on properties of $F$, $f$, the initial condition $x$ as well as the H\"older norms $\vert (Y,\Wtau)\vert_\alpha$, $(1+\vert f(Y(0))\vert + T^\alpha\vert \nabla_yf(Y)\cdot f(Y)'\vert_\alpha + \vert R^{Y,W}\vert_{2\alpha})^2$ and $\vert W\vert_\alpha$.
Since $2\alpha+\alpha^2>1$, Theorem \ref{theo:RI} shows $\prob$-\as\ that
\begin{align*}
	\lim_{\vert\partition\vert\to 0} \sum_{[s,t]\in\partition} \Xi(s,t) \to \int_0^T \la \nabla_y F(\id, Y, \Wtau), f(Y)\ra \dd\Wb.
\end{align*}
The sum over all the other terms in \eqref{theo:funcIto4SolXW:eq:6} converge $\prob$-\as\ to zero, using the regularity assumptions on $F$ and the estimates from Theorem \ref{theo:RI} and \ref{theo:sol2cSDE}. 
\end{proof}

\section{}\label{sec:appendixB}

\begin{proof}[Proof of Lemma \ref{lem:condDist}]
	We construct a conditional distribution $\prob_{\omega,\z}{}$ for $\prob$
	and $(\omega,\z)\in\Omega\times C([0,T],\R^d)$, such that for all $(\omega, \z)$, $\prob_{\omega,\z}$ is a probability measure on $C([0,T],\R^n)\times L^0([0,T],\R^m)\times \RP_{\tau(0)}$; for every $E\in\borelinfty(C([0,T],\R^n)\otimes L^0([0,T],\R^m)\otimes \mathfrak C_{\tau(0)}$, $(\omega, \z)\mapsto\prob_{\omega,\z}(E)$ is $\F_{\tau(0)}\otimes\borelsup{T}{d}$ measurable and $\prob_{\omega, \z}(S)=1$. Note that the last expression is well defined since $S$ is a measurable set by Lemma \ref{lem:solutionset}. For $D\in\borelsup{T}{d}$, we then set
	\begin{align}\label{lem:condDist:eq:1}
		\cP{Y^\phi,\phi,\Wtaunullb, \Wtau}{\F_{\tau(0)}}(E\times D, \omega):= \int_D \prob_{\omega, \z}(E) \prob^{\Wtau}(\dd \z).
	\end{align}	
	Then for every $\omega\in\Omega$, $\cP{Y^\phi,\phi,\Wtaunullb, \Wtau}{\F_{\tau(0)}}(\cdot, \omega)$ extends to a probability measure on $\borel(U_{\tau(0)})$ with 
	\begin{align}\label{lem:condDist:eq:2}
		\cP{Y^\phi,\phi,\Wtaunullb, \Wtau}{\F_{\tau(0)}}(S\times C^\alpha, \omega)= \int_{C^\alpha}\prob_{\omega,\z}(S)\prob^{\Wtau}(\dd\z)= \prob^{\Wtau}(C^\alpha)=1
	\end{align}
	since $W$ has $\prob$-\as\ $\alpha$-H\"older continuous sample paths and $\tau$ is $C^1$. Hence this will show part $i)$ of the lemma. 
	
	For the construction we consider the extended probability space $\hat{\Omega}= \Omega\times C([0,T],\R^d)$ with $\sigma$-algebra $\hat{\F}=\F\otimes\borelsup{T}{d}$ and probability measure $\hat{\prob}$ via setting for rectangular sets $\hat{\prob}(G\times D) = \prob(G\cap \Wtau\in D)$, where $G\in\F$, $D\in\borelsup{T}{d}$. Then
	\begin{align*}
		(Y^\phi, \phi)(\omega, \z):= (Y^\phi(\omega),\phi(\omega)),\quad \pi_{\z}(\omega, \z):= \z,\quad \Wtaunullb(\omega, t):= \Wtaunullb(\omega)
	\end{align*}
	are random variables from the extended measurable space to a Polish space. Hence there exists a conditional distribution $\hat{\prob}_{Y^\phi,\phi\mid\pi_{\z},\Wtaunullb}\colon \hat{\Omega}\times \borelsup{T}{n}\otimes \borel_{d_{KF}}(L^0([0,T],\R^m))\to[0,1]$. In particular by Definition \ref{defi:cP}, we know for every $A\in\borelsup{T}{n}$, $B\in\borel_{d_{KF}}(L^0([0,T],\R^m))$ that $\hat{\prob}_{Y^\phi,\phi\mid\pi_{\z},\Wtaunullb}(A\times B, \cdot)$ is a representative of 
	\begin{align}\label{lem:condDist:eq:3}
		\hat{\expE}(\one_{(Y^\phi, \phi)^{-1}(A\times B)}\mid \pi_{\z}, \Wtaunullb).
	\end{align}
	We next define 
	\begin{align*}
		\prob_{\omega, \z}:= \hat{\prob}_{Y^\phi,\phi\mid\pi_{\z},\Wtaunullb}(\cdot, (\omega, \z))\otimes \delta_{\Wtaunullb(\omega)}
	\end{align*} 
	Then it is clear that for every $(\omega,\z)$, $\prob_{\omega,\z}$ is a probability measure on $\borelinfty(C([0,T],\R^n)\otimes \borel_{d_{KF}}(L^0([0,T],\R^m)\otimes \mathfrak{C}_{\tau(0)}$. Moreover for every $C\in\mathfrak{C}_{\tau(0)}$, 
	\begin{align*}
		(\omega, \z)\mapsto \hat{\prob}_{Y^\phi,\phi\mid\pi_{\z},\Wtaunullb}(A\times B, (\omega, \z)) \one_{C}(\Wtaunullb(\omega))
	\end{align*}
	is $\F_{\tau(0)}\otimes\borelsup{T}{d}$ measurable (clearly $\sigma(\pi_z,\Wtaunullb)\subseteq\F_{\tau(0)}\otimes\borelsup{T}{d}$) and, recalling \eqref{lem:condDist:eq:3}, $\prob_{\omega, \z}(A\times B\times C)$ is a representative of 
	\begin{align}\label{lem:condDist:eq:4}
		\hat{\expE}(\one_{(Y^\phi, \phi)^{-1}(A\times B)}\mid \pi_{\z}, \Wtaunullb)\one_{C}(\Wtaunullb
        ) = \hat{\expE}(\one_{(Y^\phi, \phi, \Wtaunullb)^{-1}(A\times B\times C)}\mid \pi_{\z}, \Wtaunullb).
	\end{align}
	A monotone class argument shows part $ii)$ of Definition \ref{defi:cP} for general events $E\in\borelinfty(C([0,T],\R^n)\otimes \borel_{d_{KF}}(L^0([0,T],\R^m))\otimes\mathfrak{C}_{\tau(0)}$.  
	In particular we deduced for the solution set $S$, that $\prob_{\omega, \z}(S)$ is a representative of 
	\begin{align*}
		\hat{\expE}(\one_{(Y^\phi, \phi, \Wtaunullb)^{-1}(S)}\mid \pi_{\z}, \Wtaunullb) = 1,
	\end{align*}
	since $\hat{\prob}((Y^\phi, \phi, \Wtaunullb)^{-1}(S))=\prob((Y^\phi, \phi, \Wtaunullb)^{-1}(S))=1$ by construction \eqref{solutionsset}.
	Let $ N:= \{(\omega, \z)\colon \prob_{\omega, \z}(S)<1\}$, then $ N\in\F_{\tau(0)}\otimes\borelsup{\tau(0)}{d}$ and $\hat{\prob}( N)=0$. Choose $\varphi\in L^0([0,T],\R^m)$ and redefine for $(\omega,\z)\in N$,
	\begin{align}\label{lem:condDist:eq:5}
		\prob_{\omega,\z}(A\times B\times C):= \delta_{(F(\varphi, \Wtaunullb(\omega)), \varphi, \Wtaunullb(\omega)))}(A\times B\times C).
	\end{align}
	Meaning that we fix any admissible strategy $\varphi$ and consider the Dirac measure in $(Y^{\varphi}(\omega), \varphi,\Wtaunullb(\omega))$. Then $(\omega,\z)\mapsto (Y^\varphi, \varphi, \Wtaunullb)$ is $\F_{\tau(0)}\otimes\borelsup{T}{d}$ measurable, too. 
	It is easy to check that the redefined $\prob_{\omega,\z}$ has the properties claimed above.
	
	We next show that \eqref{lem:condDist:eq:1} indeed defines a conditional distribution of $(Y^\phi,\phi,\Wtaunullb, \Wtau)$ given $\F_{\tau(0)}$. Note first that since $(\omega,\z)\mapsto \prob_{\omega,\z}(E)$ is $\F_{\tau(0)}\otimes\borelsup{T}{d}$ measurable, $\omega\mapsto \cP{Y^\phi,\phi,\Wtaunullb, \Wtau}{\F_{\tau(0)}}(E\times D, \omega)$ is $\F_{\tau(0)}$ measurable. Moreover for fixed $\omega\in\Omega$, $\cP{Y^\phi,\phi,\Wtaunullb, \Wtau}{\F_{\tau(0)}}(\cdot, \omega)$ is probability measure. To show that $\cP{Y^\phi,\phi,\Wtaunullb, \Wtau}{\F_{\tau(0)}}(E\times D, \cdot)$ is a representative of 
	\begin{align*}
		\expE(\one_{(Y^\phi,\phi,\Wtaunullb,\Wtau)^{-1}(E\times D)}\mid \F_{\tau(0)}),
	\end{align*}
	let $G\in\F_{\tau(0)}$ and consider
	\begin{align*}
		\expE(\cP{Y^\phi,\phi,\Wtaunullb, \Wtau}{\F_{\tau(0)}}(E\times D, \cdot)\one_G)= \int_\Omega\int_{C([0,T],\R^d)} \prob_{\omega,\z}(E)\one_{G\times D}(\omega, \z) \prob^{\Wtau}(\dd \z)\prob(\dd\omega).
	\end{align*}
	By construction \eqref{Wtau}, $\Wtau$ is independent of $\F_{\tau(0)}$. Therefore $\hat{\prob}_{\mid \F_{\tau(0)}\otimes\borelsup{T}{d}} = \prob_{\mid\F_{\tau(0)}}\otimes\prob^{\Wtau}$. 
	Recalling \eqref{lem:condDist:eq:4}, it follows that
	\begin{align*}
		&\int_\Omega\int_{C([0,T],\R^d)} \prob_{\omega,\z}(E)\one_{G\times D}(\omega, \z) \prob^{\Wtau}(\dd \z)\prob(\dd\omega)
		= 
		\int_{\hat{\Omega}} \hat{\expE}(\one_{(Y^\phi,\phi,\Wtaunullb)^{-1}(E)}\mid \pi_\z, \Wtaunullb) \one_{G\times D} \dd\hat{\prob}\\
		&= \hat{\prob}((Y^\phi,\phi,\Wtaunullb)^{-1}(E)\times C[0,T])) \cap (G\times D)) 
		= \prob((Y^\phi,\phi,\Wtaunullb)^{-1}(E)\cap G\cap (\Wtau)^{-1}(D)),
	\end{align*}
	where we used that $G\times D\in \F_{\tau(0)}\otimes\borelsup{T}{d}$ and the definition of $\hat{\prob}$. A monotone class argument shows part $ii)$ of Definition \ref{defi:cP} for general elements in $\borel(U_{\tau(0)})$. Finally we define the conditional distribution of $(Y^\phi,\phi,\Wtau)$ given $\F_{\tau(0)}$ via setting  
	\begin{align*}
		\cP{Y^\phi,\phi,\Wtau}{\F_{\tau(0)}}(A\times B\times D,\omega):= \cP{Y^\phi,\phi,\Wtaunullb,\Wtau}{\F_{\tau(0)}}(A\times B\times \RP_{\tau(0)}\times D,\omega)
	\end{align*}
	
	It is left to show assertions $ii)$ and $iii)$. Recalling \eqref{lem:condDist:eq:1} and \eqref{lem:condDist:eq:5}, it holds for $f=\one_{A\times B\times C\times D}$ that
	\begin{align*}
		&\int f(\y,\varphi,\wb,\z) \cP{Y^\phi,\phi,\Wtaunullb,\Wtau}{\F_{\tau(0)}}(\dd(\y,\varphi,\wb,\z), \omega) \\
		&= \int_D (\hat{\prob}_{Y^\phi,\phi\mid\pi_{\z},\Wtaunullb}(A\times B, (\omega, \z))
		\one_{\hat{\Omega}\setminus  N} + \delta_{F(\varphi,\Wtaunullb(\omega))}(A\times B)\one_{ N})\one_{C}(\Wtaunullb(\omega))\ \prob^{\Wtau}(\dd\z)\\
		&= \int \one_{A\times B\times D}(\y,\varphi, \z)\ \cP{Y^\phi,\phi,\Wtau}{\F_{\tau(0)}}(\dd(\y,\varphi,\z), \omega)\cdot \one_{C}(\Wtaunullb(\omega))\\
		&= \int f(\y,\varphi,\wb,\z)\cP{Y^\phi,\phi,\Wtau}{\F_{\tau(0)}}(\dd(\y,\varphi,\z), \omega)\mid_{\wb = \Wtaunullb(\omega)}.
	\end{align*}
	Then a monotone class argument allows to first consider simple $f = \one_{H}$ for $H\in\borel(U_{\tau(0)})$ and then part $ii)$ of the lemma.
	Similarly, assertion $iii)$ follows from the fact that for $f=\one_{C\times D}$ it holds,
	\begin{align*}
		\int f(\wb,\z) \cP{Y^\phi,\phi,\Wtaunullb, \Wtau}{\F_{\tau(0)}}(\dd (\y,\varphi,\wb,\z), \omega)
		&= \cP{Y^\phi,\phi,\Wtaunullb, \Wtau}{\F_{\tau(0)}}(C([0,T],\R^2)\times C\times D, \omega)\\
		&= \delta_{\Wtaunullb(\omega)}\otimes\prob^{\Wtau}(C\times D).
	\end{align*}
\end{proof}

\subsection*{Acknowledgement} Peter Bank gratefully acknowledges funding by the Deutsche Forschungsgemeinschaft (DFG, German Research Foundation) – CRC/TRR 388 ``Rough Analysis, Stochastic Dynamics and Related Fields'' – Project ID 516748464.
\printbibliography
\printindex

\end{document}